\documentclass{article}
\usepackage{color, xcolor, colortbl}
\usepackage{graphicx,epstopdf}
\usepackage{geometry}
\usepackage{enumerate}
\usepackage{amsmath,amssymb,amsthm}
\usepackage{algorithm}
\usepackage{algorithmic}
\usepackage{caption}

\usepackage{bm}
\usepackage{appendix}
\usepackage{multirow}
\usepackage{braket}
\usepackage[english]{babel}
\usepackage{hyperref}
\usepackage[capitalize]{cleveref}
\usepackage[sort&compress,square,numbers]{natbib}
\usepackage{adjustbox}
\usepackage{xspace}
\usepackage[roman]{complexity}
\usepackage{soul}
\usepackage{tikz}
\usetikzlibrary{quantikz}
\usepackage{rotating}
\usepackage{setspace}
\usepackage{fancyhdr}
\usepackage{float}
\usepackage{subfig}
\usepackage{authblk}

\newcommand{\norm}[1]{\left\lVert#1\right\rVert}

\newcommand{\RR}{\mathbb{R}}

\renewcommand{\Re}{\operatorname{Re}}

\newcommand{\I}{\mathrm{i}}

\newtheorem{thm}{\protect\theoremname}
\theoremstyle{plain}
\newtheorem{lemma}[thm]{\protect\lemmaname}
\theoremstyle{plain}
\newtheorem{rem}[thm]{\protect\remarkname}
\theoremstyle{plain}
\theoremstyle{plain}

\theoremstyle{plain}
\newtheorem{cor}[thm]{\protect\corollaryname}

\providecommand{\definitionname}{Definition}
\providecommand{\assumptionname}{Assumption}
\providecommand{\corollaryname}{Corollary}
\providecommand{\lemmaname}{Lemma}
\providecommand{\propositionname}{Proposition}
\providecommand{\remarkname}{Remark}
\providecommand{\theoremname}{Theorem}

\title{Splitting Analysis for Yukawa Potential}

\author[1,2]{Di Fang}
\author[1,2]{Jiaqi Zhang}

\affil[1]{Department of Mathematics, Duke University}
\affil[2]{Duke Quantum Center, Duke University}

\date{} 

\begin{document}
\maketitle

\begin{abstract}
Splitting methods are among the most classical and fundamental tools for the simulation of quantum dynamics, and their importance has grown further with the rise of quantum computing.
In this work, we analyze the Schr\"odinger equation with Yukawa potential, a physically relevant and widely used model potential.
It may be viewed as a Coulomb interaction with exponential decay at spatial infinity, preserving the Coulomb singularity at the origin while removing the long-range Coulomb tail.
We prove that the operator splitting for this unbounded Hamiltonian achieves a global $1/4$-order convergence rate in the time step for many-body Yukawa interactions, with explicit polynomial dependence on the number of particles.
The result holds for all initial wavefunctions in $H^2(\mathbb R^{3N})$, the natural domain of the Hamiltonian, and our numerical experiments are consistent with the theoretical estimates.
To identify the sharp obstruction behind this rate, we prove a short-time lower bound in the one-body setting of order $t^{5/4}$ for the one-step error, which rules out any uniform global estimate of order better than $1/4$ in general.
This agreement with the optimal $1/4$ rate in the Coulomb case is particularly interesting, as Yukawa potential is short-ranged compared to Coulomb potential.
For the many-body upper bound, one of the new technical ingredients is the explicit polynomial-in-system-size Sobolev estimates of many-body Yukawa systems. These estimates are crucial for obtaining fully a priori bounds that depend only on the norms of the initial states, rather than on the solution at time $t$. For the one-body lower bound, we leverage a new analysis argument based on Fourier analysis and Kato smoothing.
\end{abstract}
\tableofcontents

\section{Introduction}

Splitting algorithms have enjoyed a long history, beginning with early theoretical work on Lie product formulas and Trotter-type semigroup approximations~\cite{Trotter1959}, later introduced as numerical algorithms by Strang~\cite{Strang1968}, and subsequently developing into a widely studied class of numerical methods in the numerical analysis community (see, e.g., books and reviews~\cite{McLachlanQuispel2002,Lubich2008book,BaoCai2013,LasserLubich2020,JinMarkowichSparber2011,BlanesCasasMurua2008,BlanesCasasMurua2024} and the papers cited therein). More recently, they have gained renewed prominence due to their central role in quantum simulation and quantum computing. Indeed, the simulation of quantum systems was one of the original motivations for the development of quantum computers~\cite{Feynman1982}, and Hamiltonian simulation remains one of the most basic and fundamental problems in the field. From this perspective, a central objective is to obtain rigorous and provable performance guarantees for algorithms applied to physically relevant systems. Among the many available approaches, the Lie-Trotter splitting retains a particularly important role because of its structural simplicity, its direct implementability, and its strong performance in a wide range of applications.
In many previous rigorous analyses of quantum simulation and quantum computing algorithms in general, the most favorable settings are those in which the Hamiltonian terms are bounded~\cite{Lloyd1996,BerryAhokasCleveEtAl2007,BerryChilds2012,BerryChildsCleveEtAl2014,BerryChildsCleveEtAl2015,BerryChildsKothari2015,LowChuang2017,ChildsMaslovNamEtAl2018,LowWiebe2019,Low2019,ChildsSu2019,ChildsSuTranEtAl2020,AnFangLin2022}. In the Schr\"odinger setting, this often means assuming that the potential is bounded or sufficiently regular, together with suitable bounds on its derivatives. Under such assumptions, analysis and complexity estimates are much more accessible~\cite{Somma2015,SahinogluSomma2020,AnFangLin2021,SuHuangCampbell2021,ZhaoZhouShawEtAk2021,ChildsLengEtAl2022,FangTres2023,BornsWeilFang2022,HuangTongFangSu2023,ZengSunJiangZhao2022,GongZhouLi2023,LowSuTongTran2023,ZhaoZhouChilds2024,YuXuZhao2024,ChenXuZhaoYuan2024,FangQu2025,BeckerGalkeSalzmannLuijk2024,MizutaIkeda2025Fujii2025,MizutaKuwahara2025}, and in certain time-dependent formulations one may even observe enhanced phenomena such as superconvergence
\cite{FangLiuSarkar2025,BornsweilFangZhang2025,FangLiuZhu2025,FangZhang2025}.

From the perspective of quantum simulation, the primary regime of interest is the many-body regime, where quantum efficiency requires error bounds with explicit polynomial dependence in the system size. Existing many-body Hamiltonian simulation analysis can often handle bounded Hamiltonians or sufficiently smooth interaction potentials through operator norm commutator estimates. The situation becomes substantially more delicate once one moves to Schr\"odinger operators with unbounded kinetic energy and singular interaction potentials. In that regime, the standard bounded operator commutator analysis is no longer directly applicable, since the Laplacian is unbounded on $L^2$, where wave function lives in $L^2$, and singular potentials make the commutator structure much more delicate. As a result, one must work on the domain of the Hamiltonian, rather than on the whole Hilbert space, and the analysis must be carried out in stronger norms that capture the regularity required for the equation to make sense. This difficulty is closely related to a broader theme in the analysis of unbounded Hamiltonian simulation, where one often needs state dependent or domain level estimates rather than crude operator norm bounds on $L^2$ \cite{AnFangLin2021, FangWuSoffer2025,KivlichanWiebeBabbushEtAl2017,TongAlbertMccleanPreskillSu2022,ZhengLengLiuWu2024,BeckerGalkeSalzmannLuijk2024,BurgarthGalkeHahnvanLuijk2023}. 

On the other hand, splitting methods for Schr\"odinger-type equations have a long and celebrated history in numerical analysis. Much of this literature focuses on one-body or few-body problems, including low-regularity nonlinear systems, and is primarily concerned with obtaining sharp convergence rates under minimal regularity assumptions; see, for instance, \cite{BaoCai2013,BaoWang2024,BaoWang2025,BaoWang2024b,BaoMaWang2024}. However, these works are usually not designed to track the dependence of the constants explicitly in the many-body system size. The goal of our work is to bridge the quantum simulation and numerical analysis perspectives by proving sharp splitting error estimates for many-body Schr\"odinger dynamics with singular Yukawa interactions, with constants that are explicit and polynomial in the system size.

In this paper, we study the Schr\"odinger equation with the Yukawa potential, a widely used screened interaction model arising in charge-stabilized colloids, Yukawa fluids, complex (dusty) plasmas, dark matter phenomenology, and screened hybrid electronic structure calculations for solids \cite{Rowlinson1989,KremerRobbinsGrest1986,RobbinsKremerGrest1988,NaidooSchnitker1994,OhtaHamaguchi2000,LiuGoreeVaulina2006,LoebWeiner2011,TranBlaha2011}. 
The Yukawa potential may be viewed as a screened Coulomb interaction: 
it retains the Coulomb-type singularity at the origin while exhibiting exponential decay at large distances.
Analytically, the Yukawa potential occupies a particularly interesting position. As $|x|\to 0$, it behaves like the Coulomb potential $1/|x|$, so the singularity at the origin remains fully present. As $|x|\to\infty$, however, the exponential factor dominates, and hence the potential is short-ranged. Thus the Yukawa potential is still unbounded and still singular, but it no longer has the long Coulomb tail. In this sense, compared with Coulomb, it retains one major difficulty while removing another.

This perspective is also closely related to recent rigorous work on the Coulomb case. In \cite{FangWuSoffer2025,FangWu2026}, the splitting errors for many-body quantum dynamics with Coulomb interactions were analyzed without smoothing or regularizing the singular potential, consistent with the physical and numerical discoveries in \cite{BurgarthFacchiHahnJohnssonYuasa2024}.
From the present point of view, the Coulomb interaction combines three major difficulties at once: it is unbounded, it is singular at the origin, and it is long-ranged. These features make the commutator analysis especially delicate and strongly influence the final convergence behavior.
The Yukawa potential may be viewed as a screened version of Coulomb potential, and this makes it a particularly natural next model to study.

This difference suggests a clear guiding intuition for the present work. Since the singularity at the origin is still present, one should not expect the Yukawa problem to become entirely routine. On the other hand, because the interaction decays exponentially at spatial infinity, the Yukawa potential separates the local Coulomb-type singularity from the long-range Coulomb tail. The central issue is therefore whether the leading splitting behavior is governed primarily by the singularity at the origin, or whether the exponential screening at infinity can improve the convergence rate. 
This leads to an important question:
\begin{center}
    \textit{
    Question 1: Can one rigorously quantify the error explicitly in the system size of the splitting algorithm for the Schr\"odinger equation with Yukawa interactions in the many-body case?
    }
\end{center}
This system size dependence is crucial from the perspective of quantum simulation: polynomial dependence on the system size is compatible with quantum efficiency, whereas exponential dependence on the system size would make many-body quantum simulation inefficient.
At the same time, from the numerical-analysis perspective, one would like to know whether the short-range behavior induced by exponential screening can improve the leading splitting convergence rate. If not, one should further ask whether this rate can be proved sharp by establishing a matching lower bound. This leads to a second question:
\begin{center}
 \textit{
Question 2: Does exponential screening improve the leading splitting rate for the Yukawa potential, and if not, can one rigorously prove sharpness through a matching lower bound?}
\end{center}
Both questions are addressed in this work. We first answer Question 1 affirmatively. We prove a rigorous many-body long-time error bound for the first-order splitting algorithm applied to Schr\"odinger equations with Yukawa interactions. The estimate holds for arbitrary initial wavefunctions in $H^2(\RR^{3N})$, the natural domain of the many-body Hamiltonian, rather than for specially chosen states or smaller invariant subspaces, and its prefactor depends polynomially on the particle number $N$. 
Our answer to Question 2 is twofold: exponential screening does not improve the leading splitting convergence rate, and a matching lower bound establishes its sharpness. Indeed, the short-range Yukawa interaction still has a Coulomb-type singularity at the origin, which dictates the leading obstruction. More precisely, the many-body upper bound gives a global $Tt^{1/4}$ error estimate, while the one-body lower bound gives a matching one-step obstruction of order $t^{5/4}$.

The main technical novelty in the upper bound analysis is the passage from one-body estimates to many-body estimates with explicit dependence on the system size. If one does not track the constants in $N$, the many-body argument is not straightforward but can typically be generalized by adapting one-body estimates. The essential difficulty in the present many-body setting is instead to keep this dependence explicit and polynomial in $N$. In particular, estimates depending on Sobolev norms of $\psi(t)$ are not sufficient for our purpose, since $\psi(t)$ is precisely the evolved state we aim to approximate and its relevant norms are not known a priori. We therefore need estimates depending only on the initial state $\psi(0)$, with constants explicit in the system size. The central estimate that we establish for many-body Yukawa Hamiltonians is such a polynomial in $N$ Sobolev flow bound, which is one of our main contributions and may be of independent interest elsewhere. In order to achieve this, we leverage the Hardy-Littlewood-Sobolev structure of the pairwise Yukawa interaction.
For the lower bound, we isolate the leading contribution through Fourier analysis, while controlling the other contributions by duality together with Kato smoothing, among other analysis techniques. This proves that the $t^{5/4}$ one-step obstruction is sharp, suggesting that no global rate better than $1/4$ can hold in general for any final time $T$.

\subsection{Problem Setup}\label{sect:pb_setup}

We work on the Hilbert space $L^2(\RR^{3N})$, where $N\ge1$ is the number of particles. We denote the particle positions by
$x=(x_1,\ldots,x_N)$, with $x_j\in\RR^3$.
Let $\mu>0$ be a fixed constant, and define the Hamiltonian in terms of the kinetic operator and potential: $H_\mu=-\Delta + V_{\mu}$, where 
\begin{equation}\label{eq:yukawa_nbody_potential}
\begin{aligned}
-\Delta=-\sum_{j=1}^N \Delta_{x_j},
&\quad
V_\mu(x)
=\sum_{1\le j<k\le N}
c_{jk}
\frac{e^{-\mu |x_j-x_k|}}{|x_j-x_k|} \quad \mbox{for} \quad N\geq2, \\
& \mbox{and} \quad V_\mu(x)
=-\frac{e^{-\mu |x|}}{|x|} \quad \mbox{for}\quad N=1.
\end{aligned}
\end{equation}
Here, $c_{jk}\in \RR, 1\leq j<k\leq N$, and we denote the maximal pair-interaction coefficient by
$C_{\mathrm{int}}
:=\max_{1\le j<k\le N}|c_{jk}|<\infty$.
We consider the Schr\"odinger evolution
\begin{equation}\label{eq:yukawa_nbody_schrodinger}
\begin{cases}
i\partial_t\psi(t)=H_\mu\psi(t),\\[0.3em]
\psi(0)=\psi_0\in H^2(\RR^{3N}).
\end{cases}
\quad t\in\RR .
\end{equation}
where the solution can be expressed as $\psi(t)=e^{-itH_\mu}\psi_0$.
Since $0\le e^{-\mu|y|}/|y|\le |y|^{-1}$ for $y\in\RR^3\setminus\{0\}$, the Yukawa interaction is no more singular than the Coulomb interaction at the
origin. By the Kato-Rellich theorem
\cite[Theorem X.12]{ReedSimon1975}; see also \cite{CyconFroeseKirschSimon1987}, $V_\mu$ is infinitesimally $-\Delta$-bounded. Hence $H_\mu$ is self-adjoint on $H^2(\RR^{3N})$. In particular, the domain of $H_{\mu}$ is $H^2$.
We use the following $H^2$ norm on $\RR^{3N}$:
\begin{equation}\label{eq:H2_norm_yukawa_nbody}
\|g\|_{H^2}
:=\left(
\|(-\Delta)g\|_{L^2(\RR^{3N})}^2
+\|g\|_{L^2(\RR^{3N})}^2
\right)^{1/2}.
\end{equation}
Throughout, $C>0$ denotes a universal constant that may change from line to line, as is typical in mathematical and numerical estimates, and subscripts indicate dependence on the displayed parameters. 
For a final time $T>0$, we divide the interval $[0,T]$ into $L$ time steps of size $t=T/L$, and set $t_j=jt$ for $j=0,\ldots,L$. On each subinterval $[t_j,t_{j+1}]$, we apply the celebrated first-order splitting algorithm, also known as the Lie-Trotter formula \cite{Trotter1959}, which is given over one step of size $t$ by
\begin{equation}\label{eq:yukawa_nbody_trotter}
U_{1,\mu}(t)
:=e^{-itV_\mu}e^{it\Delta}.
\end{equation}
So overall, the full $L$-step approximation is
$U_{1,\mu}(t)^L
=\left(e^{-itV_\mu}e^{it\Delta}\right)^L$.

\subsection{Main Results and Understanding}
\label{sec:main-results}

Our first main result is the many-body long-time upper bound for the first-order
Lie-Trotter splitting. It shows that the global error converges with order
$1/4$ in the time step and has explicit polynomial dependence on the particle number $N$.

\begin{thm}[Many-body long-time Yukawa Trotter upper bound]
\label{thm:main-many-body-yukawa-upper}
Let $N\ge 2$, and let $H_\mu=-\Delta+V_\mu$ be the many-body Yukawa Hamiltonian, with $-\Delta$ and $V_\mu$ defined in \cref{eq:yukawa_nbody_potential}.
Assume that $C_{\mathrm{int}}=\max_{1\le j<k\le N}|c_{jk}|<\infty$, and fix $\mu>0$.
Let $T>0$, $L\in\mathbb N$, and set
$t=T/L\in(0,1]$. Then, for every
$\psi_0\in H^2(\mathbb R^{3N})$, the first-order splitting operator
$U_{1,\mu}(t)$ defined in \cref{eq:yukawa_nbody_trotter} satisfies
\begin{equation}
\left\|
U_{1,\mu}(t)^L\psi_0
-e^{-iTH_\mu}\psi_0
\right\|_{L^2(\mathbb R^{3N})}
\le
\widetilde C_{\mu,C_{\mathrm{int}}}\,
N^{9/2}\,
T\,t^{1/4}
\|\psi_0\|_{H^2(\mathbb R^{3N})},
\label{eq:main-many-body-yukawa-upper}
\end{equation}
where $\widetilde C_{\mu,C_{\mathrm{int}}}>0$ depends only on $\mu$ and
$C_{\mathrm{int}}$, and is independent of $t$, $T$, $L$, $N$, and $\psi_0$.
\end{thm}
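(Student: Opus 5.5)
The proof has three parts: telescoping, a one-step local error bound of order $t^{5/4}$ in terms of $\|\phi\|_{H^2}$, and an a priori Sobolev bound on the exact flow that is polynomial in $N$.

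\textbf{Telescoping.} Write $U=U_{1,\mu}(t)$ and $S=e^{-itH_\mu}$. Then
\begin{equation*}
U^L\psi_0-S^L\psi_0=\sum_{j=0}^{L-1}U^{L-1-j}\,(U-S)\,S^{j}\psi_0 .
\end{equation*}
Since $U$ is unitary on $L^2$, this gives
\begin{equation*}
\|U^L\psi_0-S^L\psi_0\|_{L^2}\le\sum_{j=0}^{L-1}\|(U-S)\psi(t_j)\|_{L^2},\qquad \psi(t_j)=S^j\psi_0 .
\end{equation*}
The theorem therefore reduces to two estimates. The first is a local bound
\begin{equation*}
\|(U-S)\phi\|_{L^2}\le C_{\mu,C_{\mathrm{int}}}\,N^{a}\,t^{5/4}\,\|\phi\|_{H^2}\qquad \text{for all }\phi\in H^2 .
\end{equation*}
The second is an a priori bound $\|\psi(s)\|_{H^2}\le C N^{b}\|\psi_0\|_{H^2}$, uniform in $s$, with $a+b=9/2$. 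Summing $L=T/t$ terms of size $t^{5/4}$ then gives $T\,t^{1/4}$.

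\textbf{Flow bound.} I would prove the $H^2$ bound by comparing $\|(-\Delta)g\|$ with $\|H_\mu g\|$, since $\|H_\mu\psi(s)\|=\|H_\mu\psi_0\|$ is conserved. The pair bounds
\begin{equation*}
\Bigl\|\tfrac{e^{-\mu|x_j-x_k|}}{|x_j-x_k|}\,g\Bigr\|\le 2\|\nabla_{x_j}g\|\quad\text{(Hardy, in relative coordinates)}
\end{equation*}
hold, with $\|\nabla_{x_j}g\|^2\le\|\Delta g\|\,\|g\|$. Summing over the $\sim N^2$ pairs gives
\begin{equation*}
\|V_\mu g\|\le C\,C_{\mathrm{int}}\,N^{3/2}\,\|\Delta g\|^{1/2}\|g\|^{1/2}.
\end{equation*}
To get this, group the pairs by $j$ and use $\sum_j\|\nabla_jg\|\le N^{1/2}\|\nabla g\|$. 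By Young's inequality,
\begin{equation*}
\|\Delta g\|\le 2\|H_\mu g\|+C N^{3}\|g\| .
\end{equation*}
Together with $\|H_\mu g\|\le C N^{3/2}\|g\|_{H^2}$, this gives $\|\psi(s)\|_{H^2}\lesssim N^{3}\|\psi_0\|_{H^2}$. The exponents will need to be tuned, possibly using the Hardy--Littlewood--Sobolev structure mentioned in the introduction, so that the final total is $N^{9/2}$.

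\textbf{Local error.} Use the variation-of-constants formula. Set $W(s)=e^{-isV_\mu}e^{is\Delta}$, so that $W(0)=S(0)=I$. Then
\begin{equation*}
U-S=\int_0^t e^{-i(t-s)H_\mu}\,\Bigl(i\,\tfrac{d}{ds}W(s)+H_\mu W(s)\Bigr)\,ds ,
\end{equation*}
and the integrand equals
\begin{equation*}
\bigl[-\Delta,\;e^{-isV_\mu}\bigr]\,e^{is\Delta}\phi .
\end{equation*}
Expanding the commutator gives
\begin{equation*}
\bigl[-\Delta,e^{-isV}\bigr]f=e^{-isV}\bigl(2is\,\nabla V\cdot\nabla f+is\,(\Delta V)f+s^2|\nabla V|^2 f\bigr).
\end{equation*}
Here $\nabla V\sim|x_j-x_k|^{-2}$ and $|\nabla V|^2\sim|x_j-x_k|^{-4}$ are too singular to put in $L^2$ against $H^2$ data, and $\Delta V$ contains a delta mass $-4\pi\delta$ at coincident points. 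This is the main obstacle.

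To get around it, I would split each pair potential into a near-field part $V^{<r}$ supported on $|x_j-x_k|<r$ and a smooth far-field remainder. I would then choose $r$ depending on $t$.
\begin{itemize}
\item The far-field part gives commutator terms bounded by $C r^{-2}s\|\phi\|_{H^2}$ and higher powers in $s$.
\item For the near-field part, I would not expand the commutator. Instead I would bound $\|(e^{-isV^{<r}}-1)f\|$ using that $V^{<r}f$ is small. By Hardy/Hölder, since $f\in H^2\subset L^\infty$ in the relative variable fiberwise, one gets $\|V^{<r}f\|\lesssim r^{1/2}\|f\|_{H^2}$ per pair. The difference of the two exact flows restricted to the near-field part is then of size $t\,r^{1/2}$.
\end{itemize}
Balancing the two contributions, $t^2r^{-2}$ against $t\,r^{1/2}$, gives $r\sim t^{1/2}$. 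The local error is then $t^{5/4}$, as desired. The pair sum again produces a polynomial factor, which I would expect to be $N^{3/2}$, multiplied against the flow bound.

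I expect the delicate parts to be making the near-field comparison rigorous and getting the $N$-bookkeeping to land exactly at $N^{9/2}$. For the near-field comparison, I would compare $e^{-itH_\mu}$ with the evolution generated by $-\Delta+V^{\ge r}$ via Duhamel, and estimate the difference through $\|V^{<r}\,\cdot\,\|_{H^2\to L^2}$ together with the $H^2$ flow bound for the truncated Hamiltonian.
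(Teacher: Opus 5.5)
Your flow bound is correct and is in fact a self-contained proof of the paper's $H^2$ flow estimate. Pairwise Hardy gives $\|V_\mu g\|\le 2C_{\mathrm{int}}N^{3/2}\|\nabla g\|$, and Young's inequality then gives $\|\Delta\psi(s)\|\le 2\|H_\mu\psi_0\|+4C_{\mathrm{int}}^2N^3\|\psi_0\|$. No tuning of exponents is needed: the pair sum contributes $(N-1)N^{1/2}$, and $(N-1)N^{1/2}\cdot N^3\le N^{9/2}$.

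\textbf{The balancing step fails.} The genuine gap is in the local estimate, at exactly the step that produces the exponent. With your far-field bound $Cr^{-2}s$, balancing $t^2r^{-2}$ against $t\,r^{1/2}$ gives $r=t^{2/5}$, a local error of order $t^{6/5}$, and only $T t^{1/5}$ globally. At your choice $r=t^{1/2}$ the far-field term is $t^2r^{-2}=t$, which sums to $O(T)$ with no convergence at all. To reach $t^{5/4}$ the regular part must be $O(s\,r^{-3/2})$. This needs the two sharper estimates of the paper's cutoff lemma:
\begin{itemize}
\item Measure $\Delta_yV^{\ge r}$ in $L^2$ of the relative variable ($\lesssim r^{-3/2}$) and pair it with the fiberwise Sobolev bound $\|f\|_{L^\infty_{x_j}}\lesssim\|\langle p_j\rangle^2 f\|$.
\item For the first-order term, use $\||y|\nabla_yV^{\ge r}\|_{L^\infty}\lesssim r^{-1}$ together with Hardy/HLS, $\||y|^{-1}\nabla f\|\lesssim\|f\|_{H^2}$. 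Your bound $\|\nabla_yV^{\ge r}\|_{L^\infty}\sim r^{-2}$ is too crude.
\end{itemize}

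\textbf{The quadratic term.} Your commutator $[-\Delta,e^{-isV_\mu}]$ contains $s^2|\nabla V^{\ge r}|^2$, which you leave unquantified. With $L^\infty$ bounds it is of size $t^3r^{-4}=t$ at $r=t^{1/2}$. With $L^2$ bounds it is $t^{7/4}$, but it involves products $\nabla V_{ab}\cdot\nabla V_{ac}$ over $O(N^3)$ triples. With the natural pairwise estimates this costs an extra factor of $N$, which breaks a uniform $N^{9/2}Tt^{1/4}$ bound for $t$ near $1$. Separately, your variation-of-constants integrand should be $\frac{d}{ds}W+iH_\mu W=i[-\Delta,e^{-isV_\mu}]e^{is\Delta}$; the expression $i\frac{d}{ds}W+H_\mu W$ is not a commutator.

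\textbf{The $N$-bookkeeping.} Your reduction to a local bound $CN^at^{5/4}\|\phi\|_{H^2}$ for arbitrary $\phi$, with $a+b=9/2$, does not fit your near-field Duhamel comparison. Estimating $V^{<r}e^{-isH^{\ge r}}\phi$, where $H^{\ge r}:=-\Delta+V^{\ge r}$, through the truncated flow bound gives $a=3/2+3$, hence $N^{15/2}$ overall. Instead, put the exact flow on the right:
\[
e^{-itH_\mu}\phi-e^{-itH^{\ge r}}\phi=-i\int_0^t e^{-i(t-s)H^{\ge r}}V^{<r}\psi(t_j+s)\,ds .
\]
Then state the local estimate in terms of $\sup_{\tau\in[t_j,t_{j+1}]}$ of the pair-summed Sobolev norms of $\psi(\tau)$, and apply the flow bound from $\psi_0$ only once, as the paper does.

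\textbf{How the paper avoids this.} The paper sidesteps all three issues with the representation $i\int_0^te^{-isV_\mu}[e^{is\Delta},V_\mu]\psi(t_j+t-s)\,ds$. This commutator is linear in $V_\mu$ and maps $H^2\to L^2$ even for the singular piece, so the cutoff at scale $s^{1/2}$ is applied inside it. The near part is bounded trivially by $\|v_{\mu,\mathrm{sin}}\|_{L^2}$ times a fiberwise Sobolev norm, giving $\lesssim s^{1/4}$, and the regular part is $\lesssim s\cdot s^{-3/4}$ by Duhamel. No comparison of flows and no quadratic term ever arises.
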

Here, we emphasize several aspects of \cref{thm:main-many-body-yukawa-upper}. First, the estimate has explicit polynomial dependence on $N$, despite the Hilbert space is exponential in $N$. 
Second, the theorem gives a global convergence rate of $1/4$ in the time step, the same leading rate as in the Coulomb case. This raises the natural question of whether the rate is merely an artifact of the proof or is genuinely sharp; below, we support sharpness numerically and prove it analytically through a one-step lower bound of order $t^{5/4}$, which rules out any uniform global rate better than $1/4$.
Moreover, we allow arbitrary initial data in $H^2(\RR^{3N})$, the domain of the Hamiltonian, meaning that the theorem applies to all quantum states for which the action of the Hamiltonian is well defined in $L^2$. Finally, our bounds are fully a priori, depending only on the initial data $\psi_0$ rather than on the unknown evolved state $\psi(t)$. It would be much easier to obtain an intermediate estimate depending on Sobolev norms of the unknown evolved state $\psi(t)$, which is precisely the state we aim to approximate. In that case, the $N$-dependence can be dramatically improved to $N^{3/2}$. Thus, a crucial novel ingredient is an explicit system-size Sobolev norm estimate for the many-body Yukawa potential; see \cref{prop:yukawa-H2-flow}.

\begin{thm}[Short-time Yukawa lower bound]
\label{thm:main-one-body-yukawa-lower}
Fix $\mu>0$. Let $H_\mu=-\Delta+V_\mu$ be the Yukawa Hamiltonian defined by the $N=1$ case of \cref{eq:yukawa_nbody_potential}, 
and let $U_{1,\mu}(t)$ be defined in \cref{eq:yukawa_nbody_trotter}. 
Assume that $H_\mu$ admits a ground state $\psi_\ast\in H^2(\mathbb R^3)$ with $\psi_\ast(0)\neq0$.
Then there exist constants $c_{\psi_\ast}>0$ and $t_{0,\mu,\psi_\ast}>0$ such that, for $0<t<t_{0,\mu,\psi_\ast}$,
\begin{equation}
\left\|
U_{1,\mu}(t)\psi_\ast
-e^{-itH_\mu}\psi_\ast
\right\|_{L^2(\mathbb R^3)}
\ge
c_{\psi_\ast}t^{5/4}.
\label{eq:main-one-body-yukawa-lower}
\end{equation}
\end{thm}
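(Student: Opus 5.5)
Since $\psi_\ast$ is a ground state, $H_\mu\psi_\ast=E\psi_\ast$ and hence $e^{-itH_\mu}\psi_\ast=e^{-itE}\psi_\ast$. The error is therefore
\[
\left\|e^{-itV_\mu}e^{it\Delta}\psi_\ast-e^{-itE}\psi_\ast\right\|
=\left\|e^{it\Delta}\psi_\ast-e^{itV_\mu}e^{-itE}\psi_\ast\right\|,
\]
using unitarity of $e^{-itV_\mu}$. We then write $e^{it\Delta}\psi_\ast=\psi_\ast+\int_0^t e^{is\Delta}\Delta\psi_\ast\,ds$ and $\Delta\psi_\ast=(V_\mu-E)\psi_\ast$. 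The error becomes
\[
R(t)=\int_0^t \big(e^{is\Delta}-e^{is(V_\mu-E)}\big)(V_\mu-E)\psi_\ast\,ds ,
\]
which is the familiar variation-of-constants representation: the kinetic and potential flows act on the same function $(V_\mu-E)\psi_\ast$ and disagree. To extract the leading term we expand once more,
\[
e^{is\Delta}-e^{isV'}=i\int_0^s e^{i(s-r)\Delta}(\Delta-V')e^{irV'}\,dr ,\qquad V':=V_\mu-E .
\]
The commutator-type action $\Delta(e^{irV'}f)$ with $f=V'\psi_\ast\sim \psi_\ast(0)/|x|$ near the origin is singular. The term $\nabla e^{irV'}\approx ir\nabla V_\mu\sim ir\,x/|x|^3$ produces, after pairing with $f$, a contribution concentrated where $r/|x|\gtrsim1$, i.e.\ at $|x|\lesssim r$ ... in fact the natural scale is the parabolic one $|x|\sim \sqrt s$.

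The plan is to work near the origin and reduce to a model computation. We decompose $\psi_\ast=\psi_\ast(0)\chi+(\psi_\ast-\psi_\ast(0)\chi)$ with a cutoff $\chi$, and $V_\mu=-1/|x|+(\text{bounded Lipschitz remainder})$, since $(1-e^{-\mu|x|})/|x|$ is bounded and Lipschitz. The leading part is then the Coulomb model error
\[
M(t)=\psi_\ast(0)\Big(e^{it\Delta}-e^{it/|x|}\Big)\big(-\tfrac1{|x|}\chi\big)
\]
integrated against time, or more directly $\psi_\ast(0)\big(e^{it\Delta}\chi - e^{it/|x|}\chi\big)$ minus its smooth first-order approximation. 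We compute its $L^2$ norm via Fourier analysis and scaling $x=\sqrt t\,y$. The function $e^{it/|x|}-1-it/|x|$ has size $\min(1,t/|x|)^2$. Its $L^2$ norm over $\mathbb R^3$ scales like $\big(\int_{|x|<t}dx+\int_{|x|>t}t^4|x|^{-4}dx\big)^{1/2}\sim t^{3/2}$, which is too small. The $t^{5/4}$ must instead come from the free part: $e^{it\Delta}(|x|^{-1}\chi)-|x|^{-1}\chi$ has Fourier transform $\approx(e^{-it|\xi|^2}-1)4\pi|\xi|^{-2}$, whose $L^2$ norm is $\big(\int \min(t|\xi|^2,1)^2|\xi|^{-4}d\xi\big)^{1/2}\sim t^{1/4}$. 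Integrating in time then gives $t\cdot t^{1/4}$. We therefore show by explicit computation that the principal term
\[
P(t)=\psi_\ast(0)\int_0^t\big(e^{is\Delta}-1\big)\big(-|x|^{-1}\chi\big)\,ds
\]
has norm at least $c|\psi_\ast(0)|t^{5/4}$. This follows from Plancherel: $\widehat P(\xi)\approx\psi_\ast(0)\,4\pi|\xi|^{-2}\,\big(t-(1-e^{-it|\xi|^2})/(i|\xi|^2)\big)$, which has modulus $\gtrsim t|\xi|^{-2}$ for $|\xi|^2\gtrsim 1/t$. Integrating over $|\xi|\in[C t^{-1/2},\infty)$ gives $t^2\cdot t^{1/2}$, i.e.\ norm $\gtrsim t^{5/4}$.

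It remains to show that everything else is $o(t^{5/4})$. The remaining terms fall into three groups:
\begin{enumerate}
\item[(a)] the part $\int_0^t(e^{isV'}-1)V'\psi_\ast\,ds$, which is $O(t^{3/2})$ by the pointwise estimate $|e^{isV'}-1|\le\min(2,s|V'|)$ and $V'\psi_\ast\sim|x|^{-1}$, using $\||x|^{-2}1_{|x|>s}\|_{L^2}\sim s^{-1/2}$;
\item[(b)] the free-evolution terms with smoother data: the bounded remainder of $V_\mu$ times $\psi_\ast$, and $(\psi_\ast-\psi_\ast(0)\chi)/|x|$;
\item[(c)] the $-E\psi_\ast$ piece.
\end{enumerate}
For (b) and (c) we need $\|(e^{is\Delta}-1)g\|\le C s^{1/4+\delta}$, i.e.\ $g\in H^{1/2+2\delta}$. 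The function $(\psi_\ast-\psi_\ast(0)\chi)/|x|$ is of size $O(|x|^{\alpha-1})$ with gradient control, because $\psi_\ast\in H^2\subset C^{0,1/2}$. This requires a Hölder/Sobolev product estimate. This is the step I expect to be the main obstacle: $H^2$ regularity alone gives only $C^{1/2}$, so $g$ lies in $H^{1/2+}$ only marginally. Where pointwise bounds are insufficient I would use duality together with Kato smoothing, $\int_0^t\||x|^{-1}e^{is\Delta}h\|^2ds\lesssim\|h\|_{\dot H^{-1/2}}^2$-type local smoothing, to bound cross terms of the form $\langle |x|^{-1}\cdot,\,e^{is\Delta}\cdot\rangle$ with an extra $t^{\epsilon}$ gain. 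Finally, choosing $t_0$ small makes all remainders at most half the principal term, which proves the lower bound with $c_{\psi_\ast}\propto|\psi_\ast(0)|$.
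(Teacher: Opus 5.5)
Your route differs from the paper's. Once one step is repaired it gives a complete and shorter proof. That step, however, is exactly the one you flag as the main obstacle, (b), and the tools you propose for it would not close it.

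\textbf{What works.} Restore the missing factor $\I$, i.e. $e^{\I t\Delta}\psi_\ast=\psi_\ast+\I\int_0^te^{\I s\Delta}\Delta\psi_\ast\,ds$. With $V'=V_\mu-E$ and $\Delta\psi_\ast=V'\psi_\ast$, the error is $e^{-\I tV_\mu}$ applied to
\[
\bigl(e^{\I t\Delta}\psi_\ast-\psi_\ast-\I t\Delta\psi_\ast\bigr)-\bigl(e^{\I tV'}\psi_\ast-\psi_\ast-\I tV'\psi_\ast\bigr).
\]
Your pointwise count (a) shows the second bracket is $O(t^{3/2})$. The $-E\psi_\ast$ piece is $O(t^2)$, and your Fourier lower bound for $P(t)$ is correct. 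In fact $(e^{\I s\Delta}-1)|x|^{-1}=-4\pi\I\int_0^se^{\I\tau\Delta}\delta_0\,d\tau$. Hence $\I P(t)$ equals the paper's leading term $-4\pi\psi_\ast(0)F_\Delta(t,\cdot)$ up to $O(t^2)$, so your computation is \cref{lem:free_delta_contribution_scaling} in another form.

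\textbf{The gap is (b).} Set $\phi:=\psi_\ast-\psi_\ast(0)\chi\in H^2$, so $\phi(0)=0$. You need $\|(e^{\I s\Delta}-1)(\phi/|x|)\|_{L^2}=o(s^{1/4})$, and none of your suggested tools gives it.
\begin{itemize}
\item The H\"older bound $|\phi|\lesssim|x|^{1/2}$ only controls the term $\phi\,x/|x|^3$ of $\nabla(\phi/|x|)$ by $|x|^{-3/2}$, which is logarithmically divergent in $L^2$.
\item A product estimate for $H^2\cdot|x|^{-1}$ does not see $\phi(0)=0$ and yields only $H^{1/2-}$. That is precisely the regularity of the leading term $\psi_\ast(0)\chi/|x|$, which is what produces $t^{5/4}$.
\item Kato smoothing gives no smallness from $e^{\I s\Delta}-1$. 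Also, the $\dot H^{-1/2}$ version you write is false, as rescaling $h(\lambda\,\cdot)$ with $\lambda\to\infty$ shows; the Kato--Yajima bound has $\|h\|_{L^2}$ on the right.
\end{itemize}
The missing ingredient is Rellich's inequality on $\RR^3$:
\[
\||x|^{-2}\phi\|_{L^2}\le\tfrac43\|\Delta\phi\|_{L^2}.
\]
It holds on $C_c^\infty(\RR^3\setminus\{0\})$, and by density and Fatou for every $\phi\in H^2(\RR^3)$ with $\phi(0)=0$. Combined with Hardy applied to $\nabla\phi$, it gives $\|\nabla(\phi/|x|)\|_{L^2}\le\frac{10}{3}\|\Delta\phi\|_{L^2}$. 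So $\phi/|x|\in H^1$, not merely marginally $H^{1/2+}$. Also $W\psi_\ast\in H^1$, since $W=(1-e^{-\mu|x|})/|x|-E$ is bounded and Lipschitz. Then $\|(e^{\I s\Delta}-1)g\|_{L^2}\le Cs^{1/2}\|g\|_{H^1}$, so (b) is $O(t^{3/2})$ and your argument closes.

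\textbf{Comparison with the paper.} Using the eigenvalue equation, you collapse the exact flow to a phase. This makes unnecessary the coefficient and phase replacement lemmas, the distributional identity for $\Delta V_\mu$, and Kato smoothing. The proof becomes one explicit Fourier computation plus Hardy/Rellich and pointwise bounds. The price is that it works only for eigenfunctions. The paper's Duhamel decomposition \cref{eq:lower_error_three_term_decomposition} does not rely on $\Delta\psi_\ast=(V_\mu-E)\psi_\ast$. Together with the exact-flow replacement \cref{lem:replace_yukawa_exact_flow_lower_bound}, this lets it handle non-eigenstate data such as the $C_c^\infty$ test functions of \cref{thm:general-one-body-yukawa-lower}. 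It also exhibits the $4\pi\delta_0$ in $\Delta V_\mu$ as the source of the obstruction, paralleling the cutoff balance in the upper bound.
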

We remark that, although \cref{thm:main-one-body-yukawa-lower} may appear to require the a priori existence of a ground state, the Yukawa Hamiltonian is known to possess a ground state for $\mu<\mu_{\mathrm c}$; see, for example, \cite{GaravelliOliveira1991,EdwardsEtAl2017}. In the normalization used here, the critical screening parameter is $\mu_{\mathrm c}\approx 0.595$. Note that under this scenario, the ground state naturally has the property $\psi_\ast(0)\neq 0$. In addition, we remark that the worst case is saturated by the ground state, which is completely physically. 

\begin{rem}[General coupling case]
\label{rem:general-coupling-one-body-lower}
The preceding \cref{thm:main-one-body-yukawa-lower} is stated for the attractive Yukawa potential and
ground state initial data. An alternative statement for
\begin{equation}\label{eq:general_Ham}
H_{\mu,Z}:=-\Delta+Z\frac{e^{-\mu|x|}}{|x|},
\quad Z\in\mathbb R \setminus \{0\},
\end{equation}
is given in \cref{thm:general-one-body-yukawa-lower} in \cref{app:general-coupling-lower}. In that formulation,
a ground state is not assumed; instead, the lower bound is proved for a
suitable fixed test function
$\psi_\ast\in C_c^\infty(\mathbb R^3)$ with $\psi_\ast(0)\neq0$.
\end{rem}

\begin{cor}[no uniform rate better than $1/4$]\label[corollary]{cor:no-better-rate}
Let $H_\mu$ and $U_{1,\mu}(t)$ and $\psi_\ast$ be as in \cref{thm:main-one-body-yukawa-lower}. There do not exist constants $C>0$ and $\alpha>1/4$ such that
\begin{equation}\label{eq:forbidden-global-bound}
\norm{U_{1,\mu}(t)^L-e^{-iTH_\mu}}_{H^2\to L^2}
\le C T t^\alpha,
\qquad T>0,\quad t=T/L\in(0,1],
\end{equation}
holds for all integers $L\ge 1$.
\end{cor}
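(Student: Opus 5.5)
The plan is to argue by contradiction, taking the single-step case $L=1$ of \cref{eq:forbidden-global-bound} and comparing it with \cref{thm:main-one-body-yukawa-lower}. Suppose constants $C>0$ and $\alpha>1/4$ existed such that \cref{eq:forbidden-global-bound} holds for all $T>0$ and all integers $L\ge1$ with $t=T/L\in(0,1]$.

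The first step is to choose $L=1$, so that $T=t$ for every $t\in(0,1]$. Then $U_{1,\mu}(t)^L=U_{1,\mu}(t)$ and $e^{-iTH_\mu}=e^{-itH_\mu}$, and the assumed bound becomes
\begin{equation*}
\norm{U_{1,\mu}(t)-e^{-itH_\mu}}_{H^2\to L^2}\le C\,t\cdot t^{\alpha}=C\,t^{1+\alpha}.
\end{equation*}

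The second step is to apply this operator bound to the ground state $\psi_\ast\in H^2(\RR^3)$ supplied by the hypotheses of \cref{thm:main-one-body-yukawa-lower}, which gives
\begin{equation*}
\norm{U_{1,\mu}(t)\psi_\ast-e^{-itH_\mu}\psi_\ast}_{L^2}\le C\,t^{1+\alpha}\norm{\psi_\ast}_{H^2}.
\end{equation*}
Here $\norm{\psi_\ast}_{H^2}<\infty$, and it is nonzero because $\psi_\ast(0)\neq0$.

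The third step is to combine this with \cref{eq:main-one-body-yukawa-lower}. For $0<t<\min(t_{0,\mu,\psi_\ast},1)$ we obtain
\begin{equation*}
c_{\psi_\ast}t^{5/4}\le C\norm{\psi_\ast}_{H^2}t^{1+\alpha},
\qquad\text{that is,}\qquad
c_{\psi_\ast}\le C\norm{\psi_\ast}_{H^2}\,t^{\alpha-1/4}.
\end{equation*}
Since $\alpha-1/4>0$, the right-hand side tends to $0$ as $t\to0^+$. This contradicts $c_{\psi_\ast}>0$, so no such $C$ and $\alpha$ can exist.

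There is no real obstacle in this argument. The only point requiring care is the quantifiers: \cref{eq:forbidden-global-bound} must hold for all $L$, including $L=1$, with $T=t$ arbitrarily small. This is exactly what lets the one-step lower bound of order $t^{5/4}$ obstruct the global rate: it yields a contradiction whenever $1+\alpha>5/4$. Everything substantive is carried by \cref{thm:main-one-body-yukawa-lower}, which we take as given.
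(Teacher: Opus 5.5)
Your proof is correct and is the same argument as the paper's: both set $L=1$ and $T=t$ to obtain $\|U_{1,\mu}(t)-e^{-itH_\mu}\|_{H^2\to L^2}\le Ct^{1+\alpha}$, which contradicts the $t^{5/4}$ lower bound of \cref{thm:main-one-body-yukawa-lower} because $1+\alpha>5/4$. Your explicit steps of applying the operator bound to $\psi_\ast$, restricting to $t<\min(t_{0,\mu,\psi_\ast},1)$, and letting $t\to0^+$ spell out what the paper leaves implicit.
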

\begin{rem}
The lower bound argument is local in time and proves the precise one-step obstruction %
$\left\|
U_{1,\mu}(s)-e^{-is H_\mu}
\right\|_{H^2(\mathbb R^3)\to L^2(\mathbb R^3)}
\gtrsim s^{5/4}$
and thereby rules out every uniform global estimate with exponent strictly larger than $1/4$. It does \emph{not} by itself prove a fixed-$T$ lower bound for $\norm{U_{1,\mu}(t)^L-e^{-iTH_\mu}}_{H^2\to L^2}$ as $L\to\infty$.
\end{rem}

\section{Proof Overview}
\label{subsec:proof-roadmap}
\cref{fig:proof-roadmap} summarizes the logical structure of
the proof. For the upper bound, a
telescoping reduction and a pairwise commutator estimate yield a local $t^{5/4}$ bound, which is closed by the many-body Sobolev flow estimate. For the lower bound, the exact three-term decomposition isolates the leading delta contribution and two lower-order non-delta terms.
\begin{figure}[H]
\centering
\begin{adjustbox}{max width=\textwidth,max totalheight=0.82\textheight}
\begin{tikzpicture}[
    node distance=0.45cm and 0.55cm,
    roadbox/.style={
        rectangle,
        rounded corners=2pt,
        draw=black!70,
        very thin,
        align=center,
        inner xsep=4pt,
        inner ysep=3pt,
        font=\scriptsize,
        text width=3.25cm
    },
    titlebox/.style={
        roadbox,
        fill=gray!12,
        draw=black!70,
        font=\scriptsize\bfseries,
        text width=3.75cm
    },
    upperbox/.style={
        roadbox,
        fill=blue!4,
        draw=blue!60!black
    },
    lowerbox/.style={
        roadbox,
        fill=orange!5,
        draw=orange!70!black
    },
    deltabox/.style={
        roadbox,
        fill=orange!10,
        draw=orange!85!black
    },
    nonbox/.style={
        roadbox,
        fill=purple!5,
        draw=purple!65!black
    },
    finalbox/.style={
        roadbox,
        fill=green!6,
        draw=green!55!black,
        font=\scriptsize\bfseries
    },
    roadarrow/.style={
        -{Latex[length=1.7mm]},
        thick,
        draw=black!70
    },
    dasharrow/.style={
        -{Latex[length=1.7mm]},
        thick,
        dashed,
        dash pattern=on 2.8pt off 1.8pt,
        draw=black!70,
        shorten <=1pt,
        shorten >=1pt
    }
]

\node[upperbox, text width=3.2cm] (uglobal) at (-4.1,-1.4)
{\cref{sec:yukawa-global-to-local}\\
global $\Rightarrow$ local};

\node[titlebox, above=0.55cm of uglobal, text width=3.7cm] (utitle)
{Many-body upper bound};

\node[upperbox, below left=1.25cm and -0.72cm of uglobal, text width=2.15cm] (ucomm)
{\cref{lem:pairwise-yukawa-commutator}\\
\textbf{cutoff balance}\\
$s^{1/4}$};

\node[upperbox, below=0.55cm of ucomm, text width=2.15cm] (ulocal)
{\cref{eq:yukawa-local-error-sup-flow}\\
local error in\\
Sobolev norms\\
$t^{5/4}$};

\node[upperbox, right=1.45cm of ucomm, text width=2.25cm] (uflow)
{\cref{prop:yukawa-H2-flow}\\
\textbf{HLS + Sobolev flow}\\
$N$-dependence};

\node[
    finalbox,
    below=1.15cm of ulocal.south,
    anchor=north,
    xshift=1.80cm,
    text width=3.10cm
] (utheorem)
{\cref{thm:main-many-body-yukawa-upper}\\
sum over $L=T/t$\\
$O(Tt^{1/4})$};

\draw[dasharrow] (uglobal.south west) -- (ucomm.north);
\draw[roadarrow] (ucomm.south) -- (ulocal.north);
\draw[dasharrow] (ucomm.east) -- (uflow.west);

\draw[roadarrow] (ulocal.south) -- ([xshift=-0.55cm]utheorem.north);
\draw[roadarrow] (uflow.south) -- ([xshift=0.55cm]utheorem.north);

\node[
    draw=blue!35,
    rounded corners=3pt,
    fit=(utitle)(uglobal)(ucomm)(ulocal)(uflow)(utheorem),
] (upperframe) {};

\node[titlebox] (ltitle) at (4.10,0.15)
{One-body lower bound};

\node[lowerbox, below=0.55cm of ltitle, text width=3.83cm] (ldecomp)
{\cref{eq:lower_error_three_term_decomposition}\\
\textbf{exact error $+$ Duhamel}\\ delta term $+$ \\two non-delta terms};

\node[deltabox, below left=0.58cm and -0.28cm of ldecomp, text width=2.7cm] (ldelta)
{\cref{thm:exact_delta_contribution_t54}\\
\textbf{Fourier analysis}\\
leading order $t^{5/4}$};

\node[nonbox, below=0.58cm of ldecomp, text width=2.45cm] (lzero)
{\cref{lem:exterior_zero_order}\\
\textbf{H\"older}\\
lower order};

\node[nonbox, below right=0.58cm and -0.31cm of ldecomp, text width=2.75cm] (lgrad)
{\cref{lem:exterior_first_order}\\
\textbf{duality + Kato}\\
lower order};

\node[deltabox, below=0.395cm of ldelta, text width=2.7cm] (lphase)
{\cref{lem:replace_delta_coefficient_lower_bound,lem:replace_yukawa_phase_delta_lower_bound}\\
coefficient $+$ phase replacements};

\node[lowerbox, below=1.70cm of lzero, text width=3.65cm] (lcombine)
{combine pieces\\
\textbf{reverse triangle inequality}};

\node[finalbox, below=0.55cm of lcombine, text width=3.65cm] (lthm)
{\cref{thm:main-one-body-yukawa-lower}\\
one-step lower bound\\
$t^{5/4}$};

\node[finalbox, below=0.45cm of lthm, text width=3.65cm] (lcor)
{\cref{cor:no-better-rate}\\
no rate better than $1/4$};

\draw[dasharrow] (ldecomp.south west) -- (ldelta.north);
\draw[dasharrow] (ldecomp.south) -- (lzero.north);
\draw[dasharrow] (ldecomp.south east) -- (lgrad.north);
\draw[roadarrow] (ldelta) -- (lphase);
\draw[roadarrow] (lphase) -- (lcombine);
\draw[roadarrow] (lzero) -- (lcombine);
\draw[roadarrow] (lgrad) -- (lcombine);
\draw[roadarrow] (lcombine) -- (lthm);
\draw[roadarrow] (lthm) -- (lcor);

\node[
    draw=orange!45,
    rounded corners=3pt,
    fit=(ltitle)(ldecomp)(ldelta)(lzero)(lgrad)(lphase)(lcombine)(lthm)(lcor),
    inner sep=3.6pt
] (lowerframe) {};

\end{tikzpicture}
\end{adjustbox}

\caption{Proof roadmap. Solid arrows indicate steps that lead to the subsequent result, while dashed arrows indicate auxiliary estimates that suffice to establish the corresponding step.
}
\label{fig:proof-roadmap}
\end{figure}

\section{Technical preliminaries}
\label{sect:tech_prelim}

In this section, we present several preliminary results that will be used
repeatedly in the proofs of the main results. 
The first is a standard duality characterization, where its proof follows the usual duality argument, see for example \cite[Theorem~4.4]{RudinFA}.
\begin{lemma}[Dual characterization of the operator norm]
\label[lemma]{lem:op_norm_dual}
Let $X$ be a normed space, let $Y$ be a Hilbert space, and let
$A:X\to Y$ be a bounded linear operator. Then
\begin{equation}
\label{eq:op_norm_dual}
\|A\|_{\mathrm{op}}
=\|A\|_{X\to Y}
=\sup_{\substack{\|x\|_X\le 1\\ \|y\|_Y\le 1}}
\left|
\left\langle y,Ax\right\rangle_Y
\right|.
\end{equation}
\end{lemma}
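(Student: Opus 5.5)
We prove the two inequalities $\le$ and $\ge$ separately, using the Riesz representation of the Hilbert-space norm on $Y$. First we record the identity
\begin{equation*}
\|z\|_Y=\sup_{\|y\|_Y\le 1}\left|\langle y,z\rangle_Y\right| \qquad\text{for every } z\in Y.
\end{equation*}
The bound $\le$ follows from Cauchy--Schwarz. The bound $\ge$ follows by choosing $y=z/\|z\|_Y$ when $z\neq0$; the case $z=0$ is trivial. This is the only place where the Hilbert structure of $Y$ is used. In a general normed target one would invoke Hahn--Banach instead, as in \cite[Theorem~4.4]{RudinFA}.

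Next we apply this identity with $z=Ax$. The definition of the operator norm gives
\begin{equation*}
\|A\|_{X\to Y}=\sup_{\|x\|_X\le1}\|Ax\|_Y=\sup_{\|x\|_X\le1}\ \sup_{\|y\|_Y\le1}\left|\langle y,Ax\rangle_Y\right|.
\end{equation*}
The iterated supremum equals the joint supremum over the product set $\{\|x\|_X\le1\}\times\{\|y\|_Y\le1\}$. This yields \cref{eq:op_norm_dual}.

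For completeness we also check the direct inequality. The bound $|\langle y,Ax\rangle|\le\|y\|\,\|A\|\,\|x\|\le\|A\|$ holds on the product of unit balls. For the reverse inequality, given $\varepsilon>0$, pick $x$ in the unit ball with $\|Ax\|\ge\|A\|-\varepsilon$. Then take $y=Ax/\|Ax\|$ if $Ax\neq0$; if $\|A\|=0$ the claim is trivial. Taking suprema and sending $\varepsilon\to0$ finishes the proof.

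There is no real obstacle. The only points needing care are the degenerate case $Ax=0$ and the justification for exchanging the iterated and joint suprema, and both are immediate.
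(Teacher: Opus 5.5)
Your proof is correct and is the standard duality argument the paper itself only cites (\cite[Theorem~4.4]{RudinFA}); the Hahn--Banach step needed for a general normed target is replaced by the explicit norming vector $y=Ax/\|Ax\|_Y$, which is available because $Y$ is a Hilbert space. One harmless slip: in $\|z\|_Y=\sup_{\|y\|_Y\le1}|\langle y,z\rangle_Y|$, Cauchy--Schwarz gives $\sup\le\|z\|_Y$ and the choice $y=z/\|z\|_Y$ gives $\|z\|_Y\le\sup$, so your two labels are swapped, although both directions are correctly proved.
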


We also recall the free Kato smoothing estimate used in the lower bound proof:
\begin{lemma}[Kato Smoothing]\label[lemma]{lem:kato}
For every $\psi_0\in L^2(\RR^3)$,
\begin{equation}
\left\| |x|^{-1} e^{it\Delta} \psi_0 \right\|_{L_t^2L_x^2}
\lesssim
\|\psi_0\|_{L_x^2}.
\end{equation}
\end{lemma}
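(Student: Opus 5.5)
The plan is to prove the estimate by Plancherel in the time variable, followed by a uniform weighted estimate for the Fourier extension operator on spheres. By density it suffices to take $\psi_0$ Schwartz with $\widehat{\psi_0}$ supported away from the origin; Fatou's lemma and the $L^2$-unitarity of $e^{it\Delta}$ then pass the bound to all $\psi_0\in L^2(\RR^3)$.

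\textbf{Step 1: Plancherel in time.} Write $f=\widehat{\psi_0}$ and pass to polar coordinates $\xi=r\omega$:
\begin{equation*}
e^{it\Delta}\psi_0(x)=c\int_0^\infty e^{-itr^2}\,E_r(x)\,r^2\,dr,
\qquad
E_r(x):=\int_{S^2}e^{irx\cdot\omega}f(r\omega)\,d\omega .
\end{equation*}
Substitute $\lambda=r^2$, so that $r^2\,dr=\tfrac{\sqrt\lambda}{2}\,d\lambda$. For fixed $x$, the function $t\mapsto e^{it\Delta}\psi_0(x)$ is then the time Fourier transform of $\lambda\mapsto \tfrac{c\sqrt\lambda}{2}E_{\sqrt\lambda}(x)$. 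Plancherel in $t$ gives
\begin{equation*}
\int_{\RR}|e^{it\Delta}\psi_0(x)|^2\,dt=C\int_0^\infty \lambda\,|E_{\sqrt\lambda}(x)|^2\,d\lambda .
\end{equation*}
Multiply by $|x|^{-2}$ and integrate in $x$, using Tonelli. The problem is then reduced to bounding $\int_{\RR^3}|x|^{-2}|E_r(x)|^2\,dx$ for each fixed $r$.

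\textbf{Step 2: Scaling.} Set $g_r:=f(r\,\cdot)|_{S^2}$ and let $\mathcal E$ be the extension operator $\mathcal E g(y)=\int_{S^2}e^{iy\cdot\omega}g(\omega)\,d\omega$. Then $E_r(x)=\mathcal E g_r(rx)$, and the change of variables $y=rx$ gives
\begin{equation*}
\int|x|^{-2}|E_r(x)|^2\,dx=r^{-1}\int|y|^{-2}|\mathcal E g_r(y)|^2\,dy .
\end{equation*}
The key claim is the uniform trace-type bound
\begin{equation*}
\int_{\RR^3}|y|^{-2}|\mathcal E g(y)|^2\,dy\le C\,\|g\|_{L^2(S^2)}^2 .
\end{equation*}
To prove it, expand $g=\sum a_{lm}Y_{lm}$ in spherical harmonics. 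The Funk--Hecke / plane-wave expansion gives
\begin{equation*}
\mathcal E Y_{lm}(y)=4\pi\, i^l\, j_l(|y|)\,Y_{lm}(\hat y),
\end{equation*}
where $j_l$ is the spherical Bessel function. By orthogonality of the $Y_{lm}$ on each sphere, the weight $|y|^{-2}$ exactly cancels the Jacobian $|y|^2$, and
\begin{equation*}
\int|y|^{-2}|\mathcal E g|^2\,dy=16\pi^2\sum_{l,m}|a_{lm}|^2\int_0^\infty j_l(\rho)^2\,d\rho .
\end{equation*}
The classical identity $\int_0^\infty j_l(\rho)^2\,d\rho=\frac{\pi}{2(2l+1)}\le\frac{\pi}{2}$ then gives the claim with a constant independent of $l$.

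\textbf{Step 3: Assembly.} Combining the previous steps, with $\lambda=r^2$ and $d\lambda=2r\,dr$,
\begin{equation*}
\int_{\RR}\big\||x|^{-1}e^{it\Delta}\psi_0\big\|_{L^2}^2\,dt\le C\int_0^\infty r^2\cdot r^{-1}\,\|f(r\,\cdot)\|_{L^2(S^2)}^2\,2r\,dr = C'\|\widehat{\psi_0}\|_{L^2}^2,
\end{equation*}
which is $\lesssim \|\psi_0\|_{L^2}^2$ by Plancherel in $x$.

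\textbf{Main obstacle.} I expect the main difficulty to be the uniform-in-$l$ bound in Step 2, in particular evaluating or bounding $\int_0^\infty j_l(\rho)^2\,d\rho$ uniformly in $l$. If a direct citation of the identity is not wanted, I would prove it from the Weber--Schafheitlin integral for $\int_0^\infty J_\nu(\rho)^2\rho^{-1}\,d\rho=\frac{1}{2\nu}$ with $\nu=l+\tfrac12$. A secondary technical point is justifying the Fourier/Plancherel step in $t$ for fixed $x$; this is handled by the assumption that $f$ is Schwartz and vanishes near $\xi=0$, which makes $\lambda\mapsto\sqrt\lambda\,E_{\sqrt\lambda}(x)$ smooth and compactly supported on $(0,\infty)$.
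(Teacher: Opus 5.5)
Your proof is correct, and it takes a different route from the paper: the paper gives no argument for this lemma and only cites Kato--Yajima (Theorem~1, Remark~(c), (1.4)) and the dispersive-estimate framework of Mizutani--Yao. Your proposal is a self-contained version of the classical Kato smooth-perturbation mechanism, made explicit for $n=3$ and the weight $|x|^{-1}$. Plancherel in $t$ turns the space-time norm into an energy integral of weighted norms of sphere extensions. This is equivalent to bounding $|x|^{-1}\delta(-\Delta-\lambda)|x|^{-1}$ uniformly in $\lambda$. The scaling $y=rx$ produces exactly the factor $r^{-1}$ that makes the energy integral reproduce $\|\widehat{\psi_0}\|_{L^2}^2$; this is where the criticality of $|x|^{-1}$ in three dimensions shows. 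Finally, the spherical-harmonic expansion with $\int_0^\infty j_l(\rho)^2\,d\rho=\pi/(2(2l+1))$ gives the uniform trace bound.

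The constants all check out. Tracking them gives $\int_{\RR}\||x|^{-1}e^{it\Delta}\psi_0\|_{L^2}^2\,dt\le\pi\|\psi_0\|_{L^2}^2$, with equality for radial $\psi_0$, since the $l=0$ mode is extremal. So your route yields the sharp constant, which the citation does not. The citation instead buys brevity and a whole family of weights, dimensions and perturbed operators, none of which the paper needs.

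Two minor points. First, to have $\lambda\mapsto\sqrt{\lambda}\,E_{\sqrt\lambda}(x)$ compactly supported, take $\widehat{\psi_0}\in C_c^\infty(\RR^3\setminus\{0\})$, which is still dense, rather than merely Schwartz and vanishing near $0$; rapid decay would suffice for Plancherel anyway. Second, the special-function identity you flag as the main obstacle can be avoided. Since $\widehat{|y|^{-2}}=2\pi^2|\xi|^{-1}$, the trace bound becomes a Schur-test estimate for the kernel $|\omega-\omega'|^{-1}$ on $S^2$. Its row integrals equal $4\pi$, which gives the same constant $8\pi^3$.
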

The above Kato smoothing follows from \cite[Theorem~1 and Remark~(c), in particular (1.4)]{KatoYajima1989}.
Throughout this paper, we use this free Kato smoothing estimate, corresponding to the special case $n=3, \sigma=1, \gamma=0$, and $H=-\Delta$ of the general smoothing framework based on the sharp dispersive estimate in \cite[(1.6)]{MizutaniYao2021}. With this choice, \cite[(1.6)]{MizutaniYao2021} yields the Kato smoothing estimate with weight $|x|^{-1}$ for the Schr\"odinger group $e^{it\Delta}$. In Kato's original paper \cite{KatoYajima1989}, this lemma corresponds to the case of supersmoothing.

\begin{lemma}[Hardy-Littlewood-Sobolev (HLS) operator bound]\label[lemma]{lem:hls_operator}
Let $p=-i\nabla$ on $L^2(\RR^3)$, and let $|p|=(-\Delta)^{1/2}$. Then $|p|^{-1}|x|^{-1}$ and $|x|^{-1}|p|^{-1}$ extend to bounded operators on $L^2(\RR^3)$, with
\begin{equation}
\bigl\||p|^{-1}|x|^{-1}\bigr\|_{L^2\to L^2}
=\bigl\||x|^{-1}|p|^{-1}\bigr\|_{L^2\to L^2}
=2.
\end{equation}
\end{lemma}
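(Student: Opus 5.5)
The HLS operator bound reduces to the classical Hardy inequality in $\mathbb R^3$. We first note that the two operators are formal adjoints of each other. Indeed, $|x|^{-1}$ (multiplication) and $|p|^{-1}$ (Fourier multiplier by $|\xi|^{-1}$) are both nonnegative self-adjoint operators. Hence, on a dense core (say $C_c^\infty(\mathbb R^3\setminus\{0\})$ for one and Schwartz functions with Fourier transform vanishing near $0$ for the other), $(|p|^{-1}|x|^{-1})^* \supseteq |x|^{-1}|p|^{-1}$. So once one of them is shown to be bounded on a dense set with norm $2$, its closure is bounded, and the other equals its adjoint on a dense set. The two norms then coincide. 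It therefore suffices to treat $A:=|x|^{-1}|p|^{-1}$.

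For $A$, we set $f=|p|^{-1}g$ with $g$ in the dense set $\{g:\hat g\in C_c^\infty(\mathbb R^3\setminus\{0\})\}$. Then $f\in H^1$ and $\||p| f\|_{L^2}=\|\nabla f\|_{L^2}=\|g\|_{L^2}$. The claim $\|Ag\|_{L^2}\le 2\|g\|_{L^2}$ becomes exactly Hardy's inequality
\begin{equation*}
\int_{\mathbb R^3}\frac{|f(x)|^2}{|x|^2}\,dx\le 4\int_{\mathbb R^3}|\nabla f(x)|^2\,dx ,
\end{equation*}
i.e.\ the constant $\bigl(2/(d-2)\bigr)^2=4$ for $d=3$. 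I would prove it by the standard integration-by-parts trick for $f\in C_c^\infty$, then extend to $H^1$ by density, since both sides are continuous in $H^1$ by Fatou/monotone convergence on the left. The trick uses $\nabla\cdot(x/|x|^2)=1/|x|^2$ in $\mathbb R^3$:
\begin{equation*}
\int\frac{|f|^2}{|x|^2}=-2\,\Re\int \bar f\,\frac{x}{|x|^2}\cdot\nabla f\le 2\Bigl(\int\frac{|f|^2}{|x|^2}\Bigr)^{1/2}\|\nabla f\|_{L^2}.
\end{equation*}
Dividing gives $\||x|^{-1}f\|\le 2\|\nabla f\|$. This gives the upper bound $\|A\|\le 2$, hence also $\||p|^{-1}|x|^{-1}\|\le2$.

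For equality we need sharpness of Hardy's constant. I would use the test family $f_\varepsilon(x)=|x|^{-1/2+\varepsilon}$ for $|x|\le1$ and $|x|^{-1/2-\varepsilon}$ for $|x|>1$. A direct radial computation gives $\int|f_\varepsilon|^2/|x|^2 \sim 4\pi\cdot\frac{1}{\varepsilon}$ and $\int|\nabla f_\varepsilon|^2\sim 4\pi\,(\tfrac14+O(\varepsilon))\cdot\frac1\varepsilon$, so the ratio tends to $4$. These $f_\varepsilon$ lie in $H^1$ (or can be mollified and truncated with negligible change), and $g_\varepsilon=|p|f_\varepsilon$ approximates elements of the dense domain. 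Hence $\|A\|\ge2$.

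The main obstacle is not the inequality itself but the functional-analytic bookkeeping: making precise that $|p|^{-1}|x|^{-1}$ is densely defined and that its closure is the adjoint of the closure of $|x|^{-1}|p|^{-1}$. I would handle this by checking $\langle |x|^{-1}|p|^{-1}g,h\rangle=\langle g,|p|^{-1}|x|^{-1}h\rangle$ for $g,h$ in the respective cores, where all integrals converge absolutely. A bounded densely defined operator whose adjoint pairing agrees on dense sets then has the same norm.
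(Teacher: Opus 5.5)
Your proof is correct, but it takes a different route from the paper. The paper gives no argument of its own; it cites Herbst's Theorem~2.5 (see also Fang--Wu--Soffer). That theorem is the general formula $\||x|^{-\alpha}|p|^{-\alpha}\|=2^{-\alpha}\Gamma(\tfrac{3-2\alpha}{4})/\Gamma(\tfrac{3+2\alpha}{4})$ for $0<\alpha<3/2$, derived from the dilation invariance of these operators, and it equals $2$ at $\alpha=1$.

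You instead note that the case $\alpha=1$ is exactly the sharp Hardy inequality $\||x|^{-1}f\|\le 2\|\nabla f\|$ after substituting $f=|p|^{-1}g$. You prove it with $\nabla\cdot(x/|x|^2)=|x|^{-2}$ and Cauchy--Schwarz. The matching lower bound comes from the near-extremal family $f_\varepsilon$; your computation checks out, since $\||x|^{-1}f_\varepsilon\|^2=4\pi/\varepsilon$ and $\|\nabla f_\varepsilon\|^2=4\pi(\tfrac{1}{4\varepsilon}+\varepsilon)$. Finally, you transfer the norm to $|p|^{-1}|x|^{-1}$ through the duality identity on dense cores, which is handled correctly. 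Your route is self-contained and elementary. Herbst's result covers the whole family $\alpha\in(0,3/2)$ at once (for example the value $\sqrt{\pi/2}$ at $\alpha=1/2$, which is Kato's inequality), more than the paper needs.

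One sentence needs fixing: $f_\varepsilon$ is \emph{not} in $H^1(\mathbb R^3)$. Indeed $|f_\varepsilon|^2r^2\sim r^{1-2\varepsilon}$ is not integrable at infinity, so $f_\varepsilon$ lies only in the homogeneous space $\dot H^1$. This does not damage the argument, because $g_\varepsilon=|p|f_\varepsilon\in L^2$ is all you need. Approximating $g_\varepsilon$ by elements of your core and applying Hardy gives $\bar A g_\varepsilon=|x|^{-1}f_\varepsilon$. Alternatively, truncate $f_\varepsilon$ at radius $R$: this changes $\|\nabla f_\varepsilon\|$ by $O(R^{-\varepsilon})$ and $\||x|^{-1}f_\varepsilon\|^2$ by $O(R^{-2\varepsilon}/\varepsilon)$, both negligible as $R\to\infty$ for fixed $\varepsilon$.
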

For the above HLS bound, see \cite[Theorem~2.5]{Herbst1977}; see also \cite{FangWuSoffer2025}.

Finally, we recall the exact local error representation that connects the splitting error to the commutator estimates used below.
Exact error representations for splitting methods are a celebrated topic, and there are various forms. In particular, some standard forms may place the singular potential as the rightmost factor in an operator product or fully expand the commutator using only the Hamiltonian summands
\cite{ChildsSuTranEtAl2020, AnFangLin2021, DescombesThalhammer2010, JahnkeLubich2000}, so the required regularity need not propagate through the expression. Although these forms are equivalent when all operators are bounded, domain issues may arise for unbounded operators as in our case. In particular, if $V$ appears as the rightmost factor, then it acts first and may fail to map the relevant domain into itself. We therefore choose a particular equivalent exact error representation, already derived in \cite[Lemma~9]{FangWuSoffer2025}, since for unbounded singular potentials this form is compatible with the required domain-to-domain mapping.
Namely, if $H=A+B$ and $U_1(t)=e^{-itB}e^{-itA}$,
then
\begin{equation}\label{eq:local_error_representation_intro}
U_1(t)-e^{-itH}
=i\int_0^t
e^{-isB}
\,[e^{-isA},B]\,
e^{-i(t-s)H}\,ds .
\end{equation}
Therefore, the local estimates below reduce to commutator bounds for $[e^{is\Delta},V_\mu]$.

\section{Proof of the Upper Bound}
\label{sec:upper-bound}

In this section, we prove the many-body long-time upper bound stated in
\cref{thm:main-many-body-yukawa-upper}. We first reduce the global error to a sum of local error terms by a standard telescoping argument (\cref{sec:yukawa-global-to-local}). Using the first-order local error representation, each local term is written as a single time integral involving the commutator $[e^{is\Delta},V_\mu]$ acting on the exact solution at an intermediate time.
This reduces the proof to two estimates: a pairwise Yukawa commutator estimate (\cref{lem:pairwise-yukawa-commutator}) and as a crucial ingredient, the system-size-explicit many-body Sobolev flow estimate for the exact Yukawa dynamics in \cref{prop:yukawa-H2-flow}. The many-body bookkeeping follows the Coulomb analysis in \cite[Sec.~4 and Sec.~5]{FangWuSoffer2025}. For simplicity of discussion, throughout this section we write $\|\cdot\|$ for the usual $L^2(\RR^{3N})$ norm, or for the induced $L^2(\RR^{3N})\to L^2(\RR^{3N})$ operator norm, depending on the context. Any other norm will be specified explicitly.

\subsection{Global-to-local reduction}
\label{sec:yukawa-global-to-local}
We use the time step notation introduced in \cref{sect:pb_setup}.
It suffices to prove the estimate in the regime $0<t\le1$. 
For $t\in[0,T]$, let $U_\mu(t):=e^{-itH_\mu}$, $\psi(t)=U_\mu(t)\psi_0=e^{-itH_\mu}\psi_0$, and recall that $U_{1,\mu}$ is defined in \cref{eq:yukawa_nbody_trotter}.
The first-order splitting operator with step size $t$ is $U_{1,\mu}(t)$ defined in \cref{eq:yukawa_nbody_trotter}.
Since $T=Lt$, we have $U_\mu(T)=U_\mu(t)^L$. By the standard
telescoping identity,
\begin{equation}
U_{1,\mu}(t)^L-U_\mu(T)
=\sum_{j=0}^{L-1}
U_{1,\mu}(t)^{L-1-j}
\left(
U_{1,\mu}(t)-U_\mu(t)
\right)
U_\mu(t_j).
\label{eq:yukawa-global-telescoping}
\end{equation}
Since both $U_{1,\mu}(t)$ and $U_\mu(t)$ are unitary on $L^2$ (the self-adjointness of $H_\mu$ is justified below in \cref{prop:yukawa-H2-flow}), applying
\cref{eq:yukawa-global-telescoping} to $\psi_0$ gives
\begin{equation}
\left\|
U_{1,\mu}(t)^L\psi_0
-U_\mu(T)\psi_0
\right\|
\le
\sum_{j=0}^{L-1}
\left\|
\left(
U_{1,\mu}(t)-U_\mu(t)
\right)
U_\mu(t_j)\psi_0
\right\|                           
=
\sum_{j=0}^{L-1}
\left\|
\left(
U_{1,\mu}(t)-U_\mu(t)
\right)
\psi(t_j)
\right\|.
\label{eq:yukawa-global-error-reduced}
\end{equation}
Thus the global error is reduced to estimating the local error estimates
\begin{equation} %
\left(
U_{1,\mu}(t)-U_\mu(t)
\right)
\psi(t_j)
=i\int_0^t
e^{-isV_\mu}
\left[
e^{is\Delta},V_\mu
\right]
\psi(t_j+t-s)\,ds,
\label{eq:yukawa-local-error-at-tj}
\end{equation}
where we used exact error representation in \cref{eq:local_error_representation_intro}.
As is clear from \cref{eq:yukawa-local-error-at-tj}, the proof now requires two system-size-explicit estimates. First, we need an $H^2\to L^2$ commutator bound for $[e^{is\Delta},V_\mu]$, obtained by summing the pairwise Yukawa estimates while keeping the dependence on $s$, $\mu$, $C_{\mathrm{int}}$, and $N$ explicit. Second, we need an a priori many-body $H^2$ flow estimate for the exact Yukawa dynamics, controlling the Sobolev norms of $\psi(t_j+t-s)$ by $\|\psi_0\|_{H^2}$ with polynomial dependence on the system size.

\subsection{$N$-body local Trotter error estimate}
\label{sec:nbody-trotter-error}

In this subsection, we reduce each local error term to Sobolev norms of the exact
Yukawa solution at intermediate times. This is the local estimate needed to complete the many-body long-time upper bound in \cref{thm:main-many-body-yukawa-upper}.
For $0<\beta<1$, set $a=s^\beta$, and let
$v_{\mu,\mathrm{reg}}$ and $v_{\mu,\mathrm{sin}}$ be the cutoff pieces
defined in \cref{eq:yukawa-pair-cutoff-decomposition}. 
The proof of \cref{lem:pairwise-yukawa-commutator} relies on the following cutoff estimates \cref{lem:yukawa-cutoff-estimates}, which we state here for later reference and establish its proof in \cref{app:pairwise-yukawa-commutator}.

\begin{lemma}[Cutoff estimates for Yukawa]
\label[lemma]{lem:yukawa-cutoff-estimates}
Fix $\mu>0$, then there exist constants
$C_{\Delta,\mu}>0$, $C_{\nabla,\mu}>0$, and $C_{\mathrm{sin}}>0$,
depending only on $\mu$ and on the fixed cutoff function, such that for all $0<s\le1$,
\begin{align}
\left\|
\Delta_y v_{\mu,\mathrm{reg}}(\cdot,s)
\right\|_{L^2(\mathbb R^3)}
&\le
C_{\Delta,\mu}a^{-3/2}
=C_{\Delta,\mu}s^{-3\beta/2},
\label{eq:yukawa-cutoff-laplacian-bound}
\\
\left\|
|y|\nabla_y v_{\mu,\mathrm{reg}}(\cdot,s)
\right\|_{L^\infty(\mathbb R^3)}
&\le
C_{\nabla,\mu}a^{-1}
=C_{\nabla,\mu}s^{-\beta},
\label{eq:yukawa-cutoff-gradient-bound}
\\
\left\|
v_{\mu,\mathrm{sin}}(\cdot,s)
\right\|_{L^2(\mathbb R^3)}
&\le
C_{\mathrm{sin}}s^{\beta/2}.
\label{eq:yukawa-cutoff-singular-bound}
\end{align}
\end{lemma}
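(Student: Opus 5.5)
The cutoff decomposition \cref{eq:yukawa-pair-cutoff-decomposition} is referenced but not displayed, so I will fix it as follows. Let $\chi\in C_c^\infty(\mathbb R^3)$ be a radial cutoff with $\chi\equiv1$ on $B_1$ and $\operatorname{supp}\chi\subset B_2$, and set $a=s^\beta$. Define
\begin{equation*}
v_{\mu,\mathrm{sin}}(y,s)=\chi(y/a)\frac{e^{-\mu|y|}}{|y|},
\qquad
v_{\mu,\mathrm{reg}}(y,s)=\bigl(1-\chi(y/a)\bigr)\frac{e^{-\mu|y|}}{|y|}.
\end{equation*}
All three bounds then come from scaling computations in polar coordinates. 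Throughout I use $0<a\le1$, valid because $s\le1$, and I keep the constants dependent only on $\mu$ and $\chi$.

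For the singular bound \cref{eq:yukawa-cutoff-singular-bound}, I use $e^{-\mu|y|}\le1$. This gives
\begin{equation*}
\|v_{\mu,\mathrm{sin}}\|_{L^2}^2\le\int_{|y|\le2a}|y|^{-2}\,dy=4\pi\cdot2a,
\end{equation*}
hence $\|v_{\mu,\mathrm{sin}}\|_{L^2}\le\sqrt{8\pi}\,a^{1/2}=C_{\mathrm{sin}}s^{\beta/2}$.

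For the gradient bound \cref{eq:yukawa-cutoff-gradient-bound}, I differentiate with the product rule. Write $f(y)=e^{-\mu|y|}/|y|$, so that $|\nabla f|\le(|y|^{-2}+\mu|y|^{-1})e^{-\mu|y|}$. On the support $|y|\ge a$ we get
\begin{equation*}
|y|\,|\nabla f|\le(|y|^{-1}+\mu)e^{-\mu|y|}\le a^{-1}+\mu\le(1+\mu)a^{-1}.
\end{equation*}
The cutoff derivative term is $a^{-1}|\nabla\chi(y/a)|\,|y|f$. It is supported in $a\le|y|\le2a$, where $|y|f\le1$, so it is bounded by $\|\nabla\chi\|_\infty a^{-1}$. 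Together these give $C_{\nabla,\mu}a^{-1}$.

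The Laplacian bound \cref{eq:yukawa-cutoff-laplacian-bound} is the only step needing care. Away from the origin, $\Delta f=\mu^2 f$, since $(-\Delta+\mu^2)f=4\pi\delta$; this is the point where the Yukawa structure is used. Expand
\begin{equation*}
\Delta v_{\mu,\mathrm{reg}}=(1-\chi(\cdot/a))\,\mu^2 f-2a^{-1}\nabla\chi(\cdot/a)\cdot\nabla f-a^{-2}(\Delta\chi)(\cdot/a)\,f .
\end{equation*}
For the first term, $\int_{|y|\ge a}f^2\,dy\le4\pi\int_0^\infty e^{-2\mu r}\,dr=2\pi/\mu$, so its $L^2$ norm is $O(\mu^{2}\mu^{-1/2})$. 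This is bounded and hence at most $C a^{-3/2}$ because $a\le1$.

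The two cutoff-derivative terms live on the annulus $a\le|y|\le2a$, which has volume $\sim a^3$. On it, $|\nabla f|\lesssim_\mu a^{-2}$ and $f\le a^{-1}$. Each of these terms is therefore pointwise $O(a^{-3})$ on a set of volume $O(a^3)$, so its $L^2$ norm is $O(a^{-3}\cdot a^{3/2})=O(a^{-3/2})$.

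Collecting the three pieces gives $C_{\Delta,\mu}a^{-3/2}=C_{\Delta,\mu}s^{-3\beta/2}$. The only obstacle is bookkeeping: checking that the exponential factor contributes at most $\mu$-dependent constants, and that the bounded far-field term is absorbed into the $a^{-3/2}$ term using $a\le1$.
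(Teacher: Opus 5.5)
Your proof is correct and follows the paper's own argument essentially step for step. The shared steps are: the product-rule expansion of $\Delta_y v_{\mu,\mathrm{reg}}$ using $\Delta V_\mu=\mu^2V_\mu$ away from the origin; the $O(a^{-3})$ pointwise bound on an annulus of volume $O(a^3)$; the bulk term $\|\mu^2V_\mu\|_{L^2}=\sqrt{2\pi}\,\mu^{3/2}$, absorbed via $a\le1$; the bound $|y|\,|\nabla V_\mu|\le\mu+O(a^{-1})$ on the support; and $\int_{|y|\lesssim a}|y|^{-2}\,dy\lesssim a$ for the singular piece. The only difference is your cutoff scale: your transition annulus is $a\le|y|\le2a$, whereas \cref{eq:yukawa-pair-cutoff-decomposition} uses $a/2\le|y|\le a$ with $0\le\chi_{\mathrm{sin}}\le1$ (a bound you also use implicitly), and this affects only the constants.
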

\begin{lemma}[Pairwise Yukawa commutator estimate]\label[lemma]{lem:pairwise-yukawa-commutator}
Fix $\mu>0$, and for $1\le j<k\le N$ set $V_{\mu,jk}(x):=e^{-\mu|x_j-x_k|}/|x_j-x_k|$.
There exists a constant
$\widetilde C_{F,\mu}>0$, depending only on $\mu$ and on the fixed cutoff
function, such that for all
$0<s\le1$, $1\le j<k\le N$, and $f\in H^2(\RR^{3N})$,
\begin{equation}
\left\|
[e^{is\Delta},V_{\mu,jk}]f
\right\|_{L^2(\mathbb R^{3N})}
\le
\widetilde C_{F,\mu}s^{1/4}
\left(
\|\langle p_j\rangle^2 f\|
+\|\langle p_k\rangle^2 f\|
\right),
\label{eq:pairwise-yukawa-commutator}
\end{equation}
where, for $\ell=j,k$, $p_\ell=-i\nabla_{x_\ell}$ and
$\langle p_\ell\rangle^2=1+|p_\ell|^2=1-\Delta_{x_\ell}$.
\end{lemma}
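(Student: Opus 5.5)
We reduce to relative coordinates and split the pair potential at scale $a=s^\beta$, estimating the regular piece through the Duhamel formula for the commutator and the singular piece crudely. For the pair $(j,k)$, the cutoff decomposition \cref{eq:yukawa-pair-cutoff-decomposition} writes $V_{\mu,jk}=v_{\mu,\mathrm{reg}}(x_j-x_k,s)+v_{\mu,\mathrm{sin}}(x_j-x_k,s)$. Here $v_{\mu,\mathrm{reg}}$ is the Yukawa potential multiplied by a smooth cutoff vanishing near $|y|\lesssim a$, and $v_{\mu,\mathrm{sin}}$ is the remainder supported in $|y|\lesssim a$. Since $e^{is\Delta}$ factorizes over particles, and $-\Delta_{x_j}-\Delta_{x_k}$ becomes $-2\Delta_y-\tfrac12\Delta_Y$ in the coordinates $y=x_j-x_k$, $Y=x_j+x_k$ (up to normalization), the commutator only involves $\Delta_{x_j}+\Delta_{x_k}$ acting against a multiplication operator in $y$. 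Every bound therefore reduces to a three-dimensional estimate in $y$, with the other variables as parameters. We would choose $\beta=1/2$ at the end.

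\emph{Singular piece.} Bound $\|[e^{is\Delta},v_{\mathrm{sin}}]f\|\le 2\|v_{\mathrm{sin}}(x_j-x_k)g\|$ with $g\in\{f,e^{is\Delta}f\}$. For fixed remaining variables, H\"older in $x_j$ gives $\|v_{\mathrm{sin}}(x_j-x_k)g\|_{L^2_{x_j}}\le\|v_{\mathrm{sin}}\|_{L^2}\|g\|_{L^\infty_{x_j}}$. The three-dimensional Sobolev embedding $H^2\subset L^\infty$ gives $\|g\|_{L^\infty_{x_j}}\lesssim\|\langle p_j\rangle^2 g\|_{L^2_{x_j}}$. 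Integrating over the remaining variables, and using that $e^{is\Delta}$ is unitary and commutes with $\langle p_j\rangle^2$, this yields $C_{\mathrm{sin}}s^{\beta/2}\|\langle p_j\rangle^2 f\|$ by \cref{eq:yukawa-cutoff-singular-bound}.

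\emph{Regular piece.} Write
\[
[e^{is\Delta},v_{\mathrm{reg}}]=\int_0^s e^{i(s-r)\Delta}\,i[\Delta,v_{\mathrm{reg}}]\,e^{ir\Delta}\,dr,
\]
and expand $[\Delta,v]=(\Delta v)+2\nabla v\cdot\nabla$, where only $\nabla_{x_j},\nabla_{x_k}$ act. The $(\Delta v_{\mathrm{reg}})$ term is handled exactly like the singular piece via H\"older and $H^2\subset L^\infty$, contributing $s\cdot C_{\Delta,\mu}s^{-3\beta/2}$ by \cref{eq:yukawa-cutoff-laplacian-bound}. For the gradient term, write $\nabla v_{\mathrm{reg}}\cdot\nabla_{x_j}g=(|y|\nabla v_{\mathrm{reg}})\cdot |y|^{-1}\nabla_{x_j}g$. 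The weighted bound \cref{eq:yukawa-cutoff-gradient-bound} controls the first factor by $C_{\nabla,\mu}s^{-\beta}$. For the second factor, apply Hardy's inequality, i.e.\ \cref{lem:hls_operator}, in the variable $x_j$ at fixed $x_k$: $\||x_j-x_k|^{-1}h\|\le 2\||p_j|h\|$. This gives $\||y|^{-1}\nabla_{x_j}g\|\le 2\||p_j|^2 g\|\le 2\|\langle p_j\rangle^2 f\|$. The gradient term thus contributes $C s\cdot s^{-\beta}$.

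\emph{Balancing and main difficulty.} Altogether,
\[
\|[e^{is\Delta},V_{\mu,jk}]f\|\lesssim \bigl(s^{\beta/2}+s^{1-3\beta/2}+s^{1-\beta}\bigr)\bigl(\|\langle p_j\rangle^2 f\|+\|\langle p_k\rangle^2 f\|\bigr).
\]
Equating the first two exponents gives $\beta=1/2$, which yields $s^{1/4}$; the third term is then $s^{1/2}\le s^{1/4}$ for $s\le1$. The main obstacle is the bookkeeping for the regular piece: the cutoff derivatives must produce exactly the $a^{-3/2}$ and $a^{-1}$ scalings, which is where \cref{lem:yukawa-cutoff-estimates} enters and where the exponential factor $e^{-\mu|y|}$ guarantees $L^2$-integrability at infinity. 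In addition, Hardy's inequality must be applied in a single particle variable, so that only $\langle p_j\rangle^2$ and $\langle p_k\rangle^2$ appear and the constants are independent of $N$.
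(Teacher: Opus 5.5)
Your proposal is correct and follows essentially the same route as the paper: the cutoff at $a=s^\beta$, Sobolev embedding in a single particle variable for both the singular piece and the $(\Delta v_{\mu,\mathrm{reg}})$ term, the weighted gradient bound together with Hardy/HLS for the first-order term, and $\beta=1/2$ to balance $s^{\beta/2}$ against $s^{1-3\beta/2}$. The only difference is in the gradient term: you apply Hardy in $x_j$ to the $\nabla_{x_j}$ part and, implicitly by symmetry, in $x_k$ to the $\nabla_{x_k}$ part, whereas the paper keeps the relative derivative $\partial_{y_\ell}$, applies HLS only in $x_1$, and then uses \cref{lem:yukawa-relative-derivative-estimate} to bound $\||p_1|\partial_{y_\ell}f\|$ by $\||p_1|^2f\|+\||p_2|^2f\|$; splitting the gradient your way makes that auxiliary lemma unnecessary.
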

\begin{proof}
By symmetry, it suffices to prove the estimate for the pair $(j,k)=(1,2)$. Write $V_{\mu,12}(x)=V_\mu(x_1-x_2)$, where $V_\mu(y)=e^{-\mu|y|}/|y|$.
Thus the singular variable for this pair is the relative coordinate
$y=x_1-x_2$. All remaining variables, including the center-type coordinate
associated with $x_1,x_2$ and the variables $x_3,\ldots,x_N$, are passive for
this pairwise multiplication operator.
Using \cref{eq:yukawa-pair-cutoff-decomposition}, decompose
$V_{\mu,12}=V_{\mu,\mathrm{reg}}(x_1-x_2,s)+V_{\mu,\mathrm{sin}}(x_1-x_2,s)$.
We first estimate the regular part. Since $V_{\mu,\mathrm{reg}}$ depends only on $x_1-x_2$, we have
$[\Delta,V_{\mu,\mathrm{reg}}(x_1-x_2,s)]
=2[\Delta_{x_1-x_2},V_{\mu,\mathrm{reg}}(x_1-x_2,s)]$. Applying Duhamel's commutator formula and writing $f_u=e^{i(s-u)\Delta}f$ and $y=x_1-x_2$, we get
\begin{equation}
\left\|
[e^{is\Delta},V_{\mu,\mathrm{reg}}]f
\right\|
\le
2\int_0^s
\left\|
(\Delta_y V_{\mu,\mathrm{reg}})(x_1-x_2,s)f_u
\right\|\,du                         
+
4\sum_{\ell=1}^3
\int_0^s
\left\|
(\partial_{y_\ell}V_{\mu,\mathrm{reg}})(x_1-x_2,s)
\partial_{y_\ell}f_u
\right\|\,du .
\label{eq:yukawa-reg-comm-start}
\end{equation}
For the zeroth-order term, we use \cref{eq:yukawa-cutoff-laplacian-bound} in \cref{lem:yukawa-cutoff-estimates} together with the Sobolev embedding in the $x_1$ variable,
$\|g\|_{L^\infty_{x_1}}\le C_{\mathrm{Sob}}\|\langle p_1\rangle^2 g\|_{L^2_{x_1}}$.
Since $e^{i(s-u)\Delta}$ commutes with $\langle p_1\rangle^2$, this gives $\left\|
(\Delta_y V_{\mu,\mathrm{reg}})(x_1-x_2,s)f_u
\right\|
\le
C_{\mu}s^{-3\beta/2}
\|\langle p_1\rangle^2 f\|$.
Therefore,
\begin{equation}
2\int_0^s
\left\|
(\Delta_y V_{\mu,\mathrm{reg}})(x_1-x_2,s)f_u
\right\|\,du
\le
C_{\mu}s^{1-3\beta/2}
\|\langle p_1\rangle^2 f\|.
\label{eq:yukawa-reg-zero-bound}
\end{equation}
For the first-order term, using
\cref{eq:yukawa-cutoff-gradient-bound} in \cref{lem:yukawa-cutoff-estimates}, the HLS bound in the $x_1$ variable,
the commutation of the free propagator with $p_1$ and $\partial_{y_\ell}$, and
\cref{lem:yukawa-relative-derivative-estimate}, we obtain
\begin{equation}
\begin{aligned}
4\sum_{\ell=1}^3
\int_0^s
\left\|
(\partial_{y_\ell}V_{\mu,\mathrm{reg}})(x_1-x_2,s)
\partial_{y_\ell}f_u
\right\|\,du
&\le
C_{\mu}s^{1-\beta}
\left(
\|\langle p_1\rangle^2 f\|
+
\|\langle p_2\rangle^2 f\|
\right)  \\
&\le
C_{\mu}s^{1-3\beta/2}
\left(
\|\langle p_1\rangle^2 f\|
+
\|\langle p_2\rangle^2 f\|
\right),
\label{eq:yukawa-reg-first-bound}
\end{aligned}
\end{equation}
where the last inequality uses $0<s\le1$. Combining this with
\cref{eq:yukawa-reg-zero-bound} gives
\begin{equation}
\left\|
[e^{is\Delta},V_{\mu,\mathrm{reg}}]f
\right\|
\le
C_{\mu}s^{1-3\beta/2}
\left(
\|\langle p_1\rangle^2 f\|
+
\|\langle p_2\rangle^2 f\|
\right).
\label{eq:yukawa-reg-final-bound}
\end{equation}
We now estimate the singular part. By unitarity of $e^{is\Delta}$ on $L^2$,
$\|[e^{is\Delta},V_{\mu,\mathrm{sin}}]f\|
\le
\|V_{\mu,\mathrm{sin}}(x_1-x_2,s)f\|
+\|V_{\mu,\mathrm{sin}}(x_1-x_2,s)e^{is\Delta}f\|$.
Using \cref{eq:yukawa-cutoff-singular-bound} in \cref{lem:yukawa-cutoff-estimates} and Sobolev embedding in the $x_1$ variable, we obtain
\begin{equation}
\left\|
[e^{is\Delta},V_{\mu,\mathrm{sin}}]f
\right\|
\le
C s^{\beta/2}
\|\langle p_1\rangle^2 f\|
\le
C s^{\beta/2}
\left(
\|\langle p_1\rangle^2 f\|
+\|\langle p_2\rangle^2 f\|
\right).
\label{eq:yukawa-singular-final-bound}
\end{equation}
Combining \cref{eq:yukawa-reg-final-bound,eq:yukawa-singular-final-bound} and choosing
$\beta=\frac12$, for which
$1-\frac32\beta=\frac{\beta}{2}=\frac14$, gives \cref{eq:pairwise-yukawa-commutator} for the pair $(1,2)$. Relabeling the variables proves the result for every $1\le j<k\le N$.
\end{proof}

The above \cref{lem:pairwise-yukawa-commutator} is for the $N\geq 2$ case, and for the $N=1$ case, we can do a similar analysis; we present the full description in \cref{app:one_upper}.
\begin{lemma}[One-body Yukawa commutator estimate]
\label[lemma]{lem:one-body-yukawa-commutator-upper}
Fix $\mu>0$, and let $V_\mu$ be the one-body Yukawa potential given by the $N=1$ case of \cref{eq:yukawa_nbody_potential}. Then, for $0<s\le1$,
\begin{equation}
\left\|
[e^{is\Delta},V_\mu]
\right\|_{H^2(\mathbb R^3)\to L^2(\mathbb R^3)}
\le C_{\mu}s^{1/4}.
\label{eq:one-body-yukawa-commutator-upper}
\end{equation}
\end{lemma}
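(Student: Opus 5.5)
The plan is to repeat the proof of \cref{lem:pairwise-yukawa-commutator} in the one-body setting, where the relative coordinate is simply $x\in\RR^3$ and the sign of the potential does not matter. For $0<s\le1$ we set $a=s^\beta$ and split $V_\mu=-(v_{\mu,\mathrm{reg}}(\cdot,s)+v_{\mu,\mathrm{sin}}(\cdot,s))$ using the cutoff decomposition \cref{eq:yukawa-pair-cutoff-decomposition}. Here $v_{\mu,\mathrm{sin}}$ is supported in $|x|\lesssim a$ and $v_{\mu,\mathrm{reg}}$ is smooth. We then bound $\|[e^{is\Delta},V_\mu]f\|_{L^2}$ for $f\in H^2(\RR^3)$ by treating the two pieces separately. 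All inputs come from \cref{lem:yukawa-cutoff-estimates}, which holds verbatim for $N=1$ since it is a statement on $\RR^3$.

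\textbf{Regular piece.} Duhamel's formula gives
\[
[e^{is\Delta},v_{\mu,\mathrm{reg}}]=i\int_0^s e^{iu\Delta}[\Delta,v_{\mu,\mathrm{reg}}]e^{i(s-u)\Delta}\,du,
\qquad
[\Delta,v]g=(\Delta v)g+2\nabla v\cdot\nabla g .
\]
We estimate the two terms as follows, with $f_u=e^{i(s-u)\Delta}f$.
\begin{itemize}
\item \emph{Zeroth-order term.} By \cref{eq:yukawa-cutoff-laplacian-bound} and the Sobolev embedding $H^2(\RR^3)\hookrightarrow L^\infty$, and since $e^{i(s-u)\Delta}$ is unitary on $H^2$, we get $\|(\Delta v_{\mu,\mathrm{reg}})f_u\|\le C_\mu s^{-3\beta/2}\|f\|_{H^2}$.
\item \emph{First-order term.} Write $\nabla v\cdot\nabla f_u=(|x|\nabla v)\cdot |x|^{-1}\nabla f_u$. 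Then \cref{eq:yukawa-cutoff-gradient-bound} controls $|x|\nabla v$ in $L^\infty$. The Hardy/HLS bound of \cref{lem:hls_operator} gives $\||x|^{-1}\nabla f_u\|\le 2\||p|^2 f_u\|\le 2\|f\|_{H^2}$. Together these bound this term by $C_\mu s^{-\beta}\|f\|_{H^2}$.
\end{itemize}
Integrating over $u\in[0,s]$ yields $C_\mu(s^{1-3\beta/2}+s^{1-\beta})\|f\|_{H^2}\le C_\mu s^{1-3\beta/2}\|f\|_{H^2}$ for $s\le1$.

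\textbf{Singular piece.} We do not expand the commutator. Instead we use
\[
\|[e^{is\Delta},v_{\mu,\mathrm{sin}}]f\|\le\|v_{\mu,\mathrm{sin}}f\|+\|v_{\mu,\mathrm{sin}}e^{is\Delta}f\|\le 2\|v_{\mu,\mathrm{sin}}\|_{L^2}\sup_{g\in\{f,\,e^{is\Delta}f\}}\|g\|_{L^\infty}.
\]
By \cref{eq:yukawa-cutoff-singular-bound} and Sobolev embedding, this is at most $C s^{\beta/2}\|f\|_{H^2}$.

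\textbf{Balancing.} We choose $\beta=1/2$, so that $1-3\beta/2=\beta/2=1/4$. Taking the supremum over $\|f\|_{H^2}\le1$ then gives \cref{eq:one-body-yukawa-commutator-upper}.

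\textbf{Main obstacle.} The only point needing care is the first-order regular term. A naive bound $\|\nabla v_{\mu,\mathrm{reg}}\|_{L^\infty}\sim a^{-2}$ would give the exponent $1-2\beta$ and hence a worse rate. The weighted bound $\||x|\nabla v_{\mu,\mathrm{reg}}\|_{L^\infty}\lesssim a^{-1}$ combined with Hardy is what brings the loss down to $s^{-\beta}$. The exponential factor $e^{-\mu|x|}$ only contributes bounded multiplicative corrections, which are absorbed into $C_\mu$.
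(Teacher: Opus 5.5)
Your proof is correct. Its overall architecture matches the paper's: a cutoff at scale $a=s^\beta$, the singular piece bounded by $\|v_{\mu,\mathrm{sin}}\|_{L^2}$ times the embedding $H^2\hookrightarrow L^\infty$, the zeroth-order regular piece handled via \cref{eq:yukawa-cutoff-laplacian-bound}, and the balance at $\beta=1/2$.

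The genuine difference is the first-order regular term $2\nabla v_{\mu,\mathrm{reg}}\cdot\nabla f_u$.

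\begin{itemize}
\item \textbf{Your route.} You bound this term at each fixed time, using the weighted estimate \cref{eq:yukawa-cutoff-gradient-bound} together with Hardy. This gives $s^{-\beta}$ per unit time and $s^{1-\beta}$ after integration. It is exactly the paper's argument for the many-body \cref{lem:pairwise-yukawa-commutator}, specialized to $N=1$, where \cref{lem:yukawa-relative-derivative-estimate} becomes trivial.
\item \textbf{The paper's route.} The one-body proof instead works in duality and uses the pointwise bound $|\nabla V_{\mu,\mathrm{reg}}|\le C_\mu V_\mu(|x|^{-1}+1)$, which is uniform in the cutoff. One factor $|x|^{-1}$ goes on $e^{-\I\tau\Delta}g$, and its time integral is controlled by Kato smoothing (\cref{lem:kato}) plus Cauchy--Schwarz in $\tau$. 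The other factor goes on $\nabla u_\tau$ via Hardy. This yields $s^{1/2}$ independently of $\beta$.
\end{itemize}

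At $\beta=1/2$ the two bounds coincide, since $s^{1-\beta}=s^{1/2}$, so the final rate is the same. That rate is set by balancing the singular piece against the zeroth-order regular piece, not by the first-order term.

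Your route is more elementary: it needs no dispersive or smoothing input and no duality, and it transfers directly to the many-body pair interactions. The paper's route shows the first-order term is subleading uniformly in the cutoff scale. It is also the mechanism reused in \cref{lem:exterior_first_order} of the lower-bound argument. There, no cutoff is available, so $|x|\nabla V_\mu\sim|x|^{-1}$ is not in $L^\infty$ and your weighted-$L^\infty$ plus Hardy step would fail.
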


\subsection{Proof of \cref{thm:main-many-body-yukawa-upper}}
\label{sec:yukawa-completion-upper-bound}
We now combine the local error representation, the pairwise commutator estimate,
and the many-body Sobolev flow estimate. Fix $j=0,\ldots,L-1$.
Using the local error representation \cref{eq:yukawa-local-error-at-tj} and the
unitarity of $e^{-isV_\mu}$ on $L^2$, we obtain
\begin{equation}
\begin{aligned}
\left\|
\left(
U_{1,\mu}(t)-U_\mu(t)
\right)
\psi(t_j)
\right\|                               \le
\int_0^t
\left\|
\left[
e^{is\Delta},V_\mu
\right]
\psi(t_j+t-s)
\right\|\,ds .
\end{aligned}
\end{equation}
Since
$V_\mu=\sum_{1\le a<b\le N}c_{ab}V_{\mu,ab}$,
we have
$[e^{is\Delta},V_\mu]
=\sum_{1\le a<b\le N}
c_{ab}[e^{is\Delta},V_{\mu,ab}]$.
Therefore, by the triangle inequality and
\cref{lem:pairwise-yukawa-commutator},
\begin{equation}
\begin{aligned}
\left\|
\left[
e^{is\Delta},V_\mu
\right]
\psi(t_j+t-s)
\right\|                               \le
\widetilde C_{F,\mu} C_{\mathrm{int}} s^{1/4}
\sum_{1\le a<b\le N}
\left(
\|\langle p_a\rangle^2\psi(t_j+t-s)\|
+\|\langle p_b\rangle^2\psi(t_j+t-s)\|
\right).
\end{aligned}
\end{equation}
After substituting this estimate into the preceding integral, we take the
supremum over $t\in[t_j,t_{j+1}]$, since
$t_j+t-s\in[t_j,t_{j+1}]$, and then integrate in $s$ to obtain
\begin{equation}
\left\|
\left(
U_{1,\mu}(t)-U_\mu(t)
\right)
\psi(t_j)
\right\|                                   \le
\frac{4}{5}
\widetilde C_{F,\mu} C_{\mathrm{int}} t^{5/4}
\sup_{t\in[t_j,t_{j+1}]}
\sum_{1\le a<b\le N}
\left(
\|\langle p_a\rangle^2\psi(t)\|
+\|\langle p_b\rangle^2\psi(t)\|
\right).
\label{eq:yukawa-local-error-sup-flow}
\end{equation}
Combining \cref{eq:yukawa-local-error-sup-flow} with the momentum summation bound and the $H^2$ flow estimate in \cref{eq:yukawa-sobolev-sum-flow}, we have
$\|(U_{1,\mu}(t)-U_\mu(t))\psi(t_j)\|
\le
C_{\mu,C_{\mathrm{int}}}N^{9/2}t^{5/4}\|\psi_0\|_{H^2}$
for every $j=0,\ldots,L-1$.
Substituting this into the telescoping bound \cref{eq:yukawa-global-error-reduced} and using $L=T/t$, we obtain
\begin{equation}
\left\|
U_{1,\mu}(t)^L\psi_0
-e^{-iTH_\mu}\psi_0
\right\|
\le
C_{\mu,C_{\mathrm{int}}}
N^{9/2}Tt^{1/4}
\|\psi_0\|_{H^2}.
\end{equation}
This proves \cref{thm:main-many-body-yukawa-upper}.

As an easier intuition for the main mechanism, and for readers interested only in the one-body problem, we include in \cref{app:one_upper} a self-contained proof of the corresponding upper bound.

\section{Proof of the Lower Bound}\label{sec:lower_bound}
In this section, we prove the one-body matching lower bound, which identifies the $t^{5/4}$ one-step obstruction and shows that the global $1/4$ rate is sharp. Throughout this section, we work in the one-body attractive Yukawa case, namely the $N=1$ case of \cref{eq:yukawa_nbody_potential}.
The commutator expansion and the distributional Yukawa identity are
\begin{equation}
\label{eq:two_non_delta}
[\Delta,V_\mu]g
=(\Delta V_\mu)g+2\nabla V_\mu\cdot\nabla g,
\quad
\Delta V_\mu
=\mu^2V_\mu+4\pi\delta_0 .
\end{equation}
The sign of the delta mass is fixed by the attractive convention in \cref{eq:yukawa_nbody_potential}.

By the exact local error representation \cref{eq:yukawa-local-error-at-tj} and the standard Duhamel formula for the
commutator, together with \cref{eq:two_non_delta}, we obtain, for $\sigma\ge0$,
\begin{equation}
\label{eq:lower_error_three_term_decomposition}
\begin{aligned}
&\left(U_{1,\mu}(t)-e^{-\I tH_\mu}\right)
e^{-\I\sigma H_\mu}\psi_0    \\
&\quad =
-\int_0^t\int_0^s
e^{-\I sV_\mu}
e^{\I\tau\Delta}
\Bigl(
4\pi g_{t,s,\sigma,\tau}(0)\delta_0
+\mu^2V_\mu g_{t,s,\sigma,\tau}
+2\nabla V_\mu\cdot\nabla g_{t,s,\sigma,\tau}
\Bigr)
\,d\tau\,ds,
\end{aligned}
\end{equation}
where $g_{t,s,\sigma,\tau}:=
e^{\I(s-\tau)\Delta}
e^{-\I(t-s+\sigma)H_\mu}\psi_0$.
The three summands in
\cref{eq:lower_error_three_term_decomposition} are, respectively, the singular delta contribution and the two non-delta contributions. We treat the delta contribution in \cref{sec:lower-bound}, and the two non-delta contributions in \cref{sec:lower-bound-nondelta}.

\subsection{Lower Bound for the Delta Contribution} 
\label{sec:lower-bound}
We now focus on the delta contribution in
\cref{eq:lower_error_three_term_decomposition}, which is the singular leading term. In the first summand, the singular source is $4\pi\delta_0 g_{t,s,\sigma,\tau}$. By Sobolev embedding $H^2(\RR^3)\hookrightarrow C^0(\RR^3)$, we have
$\delta_0g_{t,s,\sigma,\tau}
=g_{t,s,\sigma,\tau}(0)\delta_0$ in the sense of distributions. Thus the exact delta contribution is
\begin{equation}
\label{eq:exact_delta_contribution_sigma}
-4\pi
\int_0^t\int_0^s
e^{-\I sV_\mu}
e^{\I\tau\Delta}
\left(
g_{t,s,\sigma,\tau}(0)\delta_0
\right)
\,d\tau\,ds .
\end{equation}
The next lemma computes the free time-integrated delta term which remains after the coefficient and phase replacements.

\begin{lemma}[Scaling of the free delta contribution]
\label[lemma]{lem:free_delta_contribution_scaling}
Let
\begin{equation}
\label{eq:F_delta_definition_lower}
F_\Delta(t,x)
:=\int_0^t ds
\int_0^s d\tau\,
e^{\I\tau\Delta}\delta_0(x).
\end{equation}
Then $F_\Delta(t,\cdot)\in L^2(\mathbb R^3)$ for every $t>0$, and there
exists a constant $C_\Delta>0$, independent of $t$, such that
for every $g_0\in\mathbb C$,
\begin{equation}
\label{eq:delta_core_with_coefficient_lower}
\left\|
4\pi g_0F_\Delta(t,\cdot)
\right\|_{L^2(\mathbb R^3)}
=4\pi |g_0|C_\Delta t^{5/4}.
\end{equation}
\end{lemma}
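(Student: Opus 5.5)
Since $g_0$ is just a scalar, it suffices to show that $F_\Delta(t,\cdot)\in L^2$ and that $\|F_\Delta(t,\cdot)\|_{L^2}=C_\Delta t^{5/4}$. I will work on the Fourier side, using the convention $\widehat{\delta_0}\equiv 1$ (any other normalization only rescales $C_\Delta$).

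\textbf{Step 1: explicit multiplier.} The operator $e^{\I\tau\Delta}$ acts in frequency as multiplication by $e^{-\I\tau|\xi|^2}$. Integrating first in $\tau$ and then in $s$ gives
\begin{equation*}
\widehat{F_\Delta}(t,\xi)=\int_0^t\int_0^s e^{-\I\tau|\xi|^2}\,d\tau\,ds
=\int_0^t (t-\tau)e^{-\I\tau|\xi|^2}\,d\tau
=\frac{\I t}{|\xi|^2}+\frac{e^{-\I t|\xi|^2}-1}{|\xi|^4}.
\end{equation*}
Since $\widehat{F_\Delta}$ is a Bochner integral of bounded functions of $\xi$, the Fubini step is justified in $\mathcal S'$. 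This yields $F_\Delta$ as a tempered distribution with an explicit radial Fourier transform.

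\textbf{Step 2: square integrability.} Write $m(t,\xi)=\int_0^t(t-\tau)e^{-\I\tau|\xi|^2}d\tau$. At small frequencies we have $|m|\le t^2/2$. At large frequencies, the explicit formula gives $|m|\le t|\xi|^{-2}+2|\xi|^{-4}$. In $\mathbb R^3$ the bound $|\xi|^{-4}$ is square integrable at infinity, and a bounded function is square integrable near the origin. Hence $m(t,\cdot)\in L^2(\mathbb R^3)$, and by Plancherel $F_\Delta(t,\cdot)\in L^2$.

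\textbf{Step 3: scaling.} Substitute $\tau=t\tau'$, which gives $m(t,\xi)=t^2\,m(1,t^{1/2}\xi)$. Then change variables $\eta=t^{1/2}\xi$, so that $d\xi=t^{-3/2}d\eta$. This gives
\begin{equation*}
\|m(t,\cdot)\|_{L^2}^2=t^{4}\,t^{-3/2}\|m(1,\cdot)\|_{L^2}^2=t^{5/2}\|m(1,\cdot)\|_{L^2}^2 .
\end{equation*}
Therefore $\|F_\Delta(t,\cdot)\|_{L^2}=C_\Delta t^{5/4}$, where $C_\Delta=(2\pi)^{-3/2}\|m(1,\cdot)\|_{L^2}$. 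Multiplying by $4\pi|g_0|$ yields \cref{eq:delta_core_with_coefficient_lower}.

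\textbf{Main obstacle.} The only nontrivial point is that $C_\Delta>0$ is finite and nonzero. Finiteness is Step 2. Positivity holds because $m(1,\xi)\to 1/2\neq0$ as $\xi\to0$, so $m(1,\cdot)$ is nonzero on a set of positive measure.
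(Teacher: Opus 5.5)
Your proposal is correct and follows essentially the same route as the paper: you compute the multiplier $\int_0^t(t-\tau)e^{-\I\tau|\xi|^2}\,d\tau=t^2m(1,\sqrt t\,\xi)$, show $m(1,\cdot)\in L^2$ from boundedness near the origin and $|\xi|^{-2}$ decay at infinity, apply Plancherel with the $t^4\cdot t^{-3/2}$ scaling, and get $C_\Delta>0$ from $m(1,0)=1/2$. The only slip is the overall sign of your closed form, which should read $-\I t|\xi|^{-2}+(1-e^{-\I t|\xi|^2})|\xi|^{-4}$ (this version correctly tends to $t^2/2$ as $\xi\to0$); since only $|m|$ enters your bounds, nothing downstream is affected.
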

\begin{proof}
All expressions involving $e^{\I\tau\Delta}\delta_0$ are first understood in
the sense of tempered distributions. The $L^2$ statement is then proved from
the Fourier representation of the time integrated distribution.
Let
$u(\tau,x):=e^{\I\tau\Delta}\delta_0(x)$.
Then $u$ is the free Schr\"odinger evolution with delta initial data:
\begin{equation}
\label{eq:free_sch_delta_pde}
\I\partial_\tau u=-\Delta u, \quad u(0)=\delta_0.
\end{equation}
Equivalently, $u$ is the fundamental solution of the free Schr\"odinger
equation. For general initial data $f$, one has
$e^{\I\tau\Delta}f=K_\tau*f$,
where $K_\tau(x)
=e^{\I|x|^2/(4\tau)}/(4\pi\I\tau)^{3/2}$ is understood as the oscillatory kernel associated with the Fourier multiplier $e^{-\I\tau|k|^2}$.
In particular, $e^{\I\tau\Delta}\delta_0=K_\tau*\delta_0=K_\tau$.
Thus $F_\Delta(t,x)=\int_0^t\int_0^sK_\tau(x)\,d\tau\,ds$.
For the $L^2$ estimate, it is more convenient to work on the Fourier side.
We use the Fourier transform convention
\begin{equation} \label{eq:fourier_convention}
\widehat f(k)
=\int_{\mathbb R^3}e^{-\I x\cdot k}f(x)\,dx,
\quad
f(x)
=\frac{1}{(2\pi)^3}
\int_{\mathbb R^3}e^{\I x\cdot k}\widehat f(k)\,dk.
\end{equation}
By Plancherel's theorem, equivalently Parseval's identity,
$\|f\|_{L_x^2}^2
=\|\widehat f\|_{L_k^2}^2/(2\pi)^3$.
Since
$\widehat{\delta_0}(k)=1$, and $\widehat{e^{\I\tau\Delta}\delta_0}(k)
=e^{-\I\tau |k|^2}$,
we obtain
$\widehat{F_\Delta}(t,k)
=\int_0^t ds
\int_0^s d\tau\,
e^{-\I\tau |k|^2}$.
Changing the order of integration over the triangle $0\le \tau\le s\le t$, we have
$\widehat{F_\Delta}(t,k)
=\int_0^t
(t-\tau)e^{-\I\tau |k|^2}
\,d\tau$.
We set $\tau=t\theta, \
0\le\theta\le1$, then
$\widehat{F_\Delta}(t,k)
=t^2
\int_0^1
(1-\theta)e^{-\I\theta |k|^2t}
\,d\theta$.
By defining
$h(q)
:=\int_0^1
(1-\theta)e^{-\I\theta |q|^2}
\,d\theta, \
q\in\RR^3$,
we have
\begin{equation}
\label{eq:F_delta_hat_h_k_sqrt_t_lower}
\widehat{F_\Delta}(t,k)
=t^2h(k\sqrt t).
\end{equation}
Thus the relevant variable in $h$ is not $k$ itself, but
$\widetilde{k}:=k\sqrt t$.
We next show that $h$ has a finite nonzero $L^2$ norm. First,
$|h(q)|
\le\int_0^1(1-\theta)\,d\theta
=\frac{1}{2}$.
For large $|q|$, write $a=|q|^2$. Direct integration gives
$h(q)=1/(\I a)-
(1-e^{-\I a})/(\I a)^2,
\quad
a=|q|^2$,
where the value at $a=0$ is understood by continuity. Hence, for $|q|\ge1$, $|h(q)| \le \frac{C}{|q|^2}$.
Therefore
$\int_{\RR^3}|h(q)|^2\,dq
\le C+C\int_{|q|\ge1}|q|^{-4}\,dq<\infty$.
Moreover,
$h(0)
=\int_0^1(1-\theta)\,d\theta
=\frac{1}{2}$,
so $h\not\equiv0$. Thus
$0<\|h\|_{L^2(\mathbb R^3)}<\infty$.
Using Plancherel's theorem and \cref{eq:F_delta_hat_h_k_sqrt_t_lower}, we get
\begin{equation}
\label{eq:F_delta_parseval_scaling_lower}
\|F_\Delta(t,\cdot)\|_{L_x^2}^2
=\frac{1}{(2\pi)^3}
\int_{\mathbb R^3}
\left|
t^2h(k\sqrt t)
\right|^2
\,dk 
=\frac{t^4}{(2\pi)^3}
\int_{\mathbb R^3}
|h(k\sqrt t)|^2
\,dk.
\end{equation}
With the change of variables $\widetilde{k}=k\sqrt t$, and since $k\in\mathbb R^3$, we have %
$dk=t^{-3/2}\,d\widetilde{k}$.
Therefore,
\begin{equation}
\label{eq:h_scaling_explicit_lower}
\int_{\mathbb R^3}|h(k\sqrt t)|^2\,dk
=t^{-3/2}
\int_{\mathbb R^3}|h(\widetilde{k})|^2\,d\widetilde{k}
=t^{-3/2}\|h\|_{L^2(\mathbb R^3)}^2.
\end{equation}
Substituting this into \cref{eq:F_delta_parseval_scaling_lower}, and taking the square root gives
$\|F_\Delta(t,\cdot)\|_{L_x^2}
=(2\pi)^{-3/2}
\|h\|_{L^2(\mathbb R^3)}
t^{5/4}
=: C_\Delta t^{5/4}$.
Multiplying by $4\pi g_0$ gives
\cref{eq:delta_core_with_coefficient_lower}.
\end{proof}
We now return to the exact delta contribution in
\cref{eq:exact_delta_contribution_sigma}. Suppose for the moment that the two delta replacements have been justified. The first is the coefficient replacement, where in the final time-integrated $L^2$ norm, the coefficient $g_{t,s,\sigma,\tau}(0)$ is replaced by its limiting ground state value $e^{-\I E_0\sigma}\psi_\ast(0)$. The second is the phase replacement, where the left Yukawa phase $e^{-\I sV_\mu}$ is removed from the leading delta contribution. After these two replacements, the remaining leading object is exactly the free time-integrated delta term computed in \cref{lem:free_delta_contribution_scaling}.

These replacements are necessary because pointwise control does not determine the leading time-integrated term. For instance, if $G(s)=1$ and $U_s=e^{-2\pi\I s/t}$, then $|U_sG(s)|=|G(s)|$ for every $s\in[0,t]$, but $\int_0^tG(s)\,ds=t$ whereas $\int_0^tU_sG(s)\,ds=0$. Thus pointwise unitarity of $e^{-\I sV_\mu}$, or pointwise boundedness of $g_{t,s,\sigma,\tau}(0)$, is not enough to identify the leading coefficient. The precise replacement statements and their complete proofs are given in
\cref{app:delta-replacement-proofs}. For the same reason, the two non-delta terms in \cref{eq:lower_error_three_term_decomposition} must also be controlled
after the full time integration in $L^2$; these estimates are given in \cref{sec:lower-bound-nondelta}, with the zeroth-order proof deferred to \cref{app:non_delta}.
We therefore obtain the following theorem:
\begin{thm}[Lower bound for the exact delta contribution]
\label[thm]{thm:exact_delta_contribution_t54}
Assume that $H_\mu\psi_\ast=E_0\psi_\ast$ and $\psi_\ast(0)\neq0$. In
\cref{eq:exact_delta_contribution_sigma}, take $\psi_0=\psi_\ast$. Then, for every fixed $\sigma\ge0$, there exist constants
$c_{\delta,\psi_\ast}>0$ and $t_{\delta,\mu,\psi_\ast,\sigma}>0$ such that
\begin{equation}
\label{eq:exact_delta_contribution_t54_lower}
\left\|
-4\pi
\int_0^t\int_0^s
e^{-\I sV_\mu}
e^{\I\tau\Delta}
\left(
g_{t,s,\sigma,\tau}(0)\delta_0
\right)
\,d\tau\,ds
\right\|_{L^2(\RR^3)}
\ge
c_{\delta,\psi_\ast}t^{5/4},
\quad
0<t<t_{\delta,\mu,\psi_\ast,\sigma}.
\end{equation}
\end{thm}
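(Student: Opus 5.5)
The idea is to reduce the exact delta contribution to the free model term of \cref{lem:free_delta_contribution_scaling}, which has norm exactly $4\pi|\psi_\ast(0)|C_\Delta t^{5/4}$. We make two replacements and show that each one costs $o(t^{5/4})$ in $L^2$ after the full time integration; the reverse triangle inequality then finishes the argument. Since $\psi_0=\psi_\ast$ is an eigenfunction, $e^{-\I(t-s+\sigma)H_\mu}\psi_\ast=e^{-\I E_0(t-s+\sigma)}\psi_\ast$. Hence
\[
g_{t,s,\sigma,\tau}(0)=e^{-\I E_0(t-s+\sigma)}\bigl(e^{\I(s-\tau)\Delta}\psi_\ast\bigr)(0).
\]
We set $g_0:=e^{-\I E_0\sigma}\psi_\ast(0)$, so $|g_0|=|\psi_\ast(0)|\neq0$.

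\textbf{Step 1 (coefficient replacement).} We bound the difference
\[
D_1:=\int_0^t\int_0^s e^{-\I sV_\mu}e^{\I\tau\Delta}\bigl((g_{t,s,\sigma,\tau}(0)-g_0)\delta_0\bigr)\,d\tau\,ds .
\]
The scalar difference obeys $|g_{t,s,\sigma,\tau}(0)-g_0|\le |E_0|t\,|\psi_\ast(0)|+|(e^{\I r\Delta}\psi_\ast-\psi_\ast)(0)|$ with $r=s-\tau\le t$. By Sobolev embedding, $H^2\hookrightarrow C^{0,1/2}$, and by interpolation $\|e^{\I r\Delta}\psi_\ast-\psi_\ast\|_{L^\infty}\lesssim \|e^{\I r\Delta}\psi_\ast-\psi_\ast\|_{H^{7/4}}\to0$ as $r\to0$; quantitatively one gets $\lesssim r^{\epsilon}$ at the cost of using $H^{2}$ only up to strong continuity. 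Pointwise smallness of the coefficient is not enough, however, because $e^{\I\tau\Delta}\delta_0\notin L^2$ for fixed $\tau$. We therefore apply Minkowski in $s$ only: for each fixed $s$, the unitary factor $e^{-\I sV_\mu}$ drops out, and we must bound $\|\int_0^s c(\tau)e^{\I\tau\Delta}\delta_0\,d\tau\|_{L^2}$ with $\|c\|_{L^\infty}=o(1)$. In Fourier this is $\int_0^s c(\tau)e^{-\I\tau|k|^2}d\tau$. Its $L^2_k$ norm is controlled by pairing with a test function $\phi\in L^2$, i.e. by $\int_0^s |c(\tau)||(e^{-\I\tau\Delta}\phi)(0)|\,d\tau$. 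The quantity $(e^{-\I\tau\Delta}\phi)(0)$ is the dual of the $L^2$-in-time trace estimate $\|(e^{\I\tau\Delta}\phi)(0)\|_{L^2_\tau}$, which fails in 3D. Instead I would use the $L^2$ norm of the Fourier expression directly via the computation in \cref{lem:free_delta_contribution_scaling}: for $c\in L^\infty$, the $L^2_k$ norm of $\int_0^s c(\tau)e^{-\I\tau|k|^2}d\tau$ is $\lesssim \|c\|_{L^\infty}s^{1/4}$. This holds because in the variable $\lambda=|k|^2$ it is a Fourier transform in $\tau$ of $c\mathbf 1_{[0,s]}$, and the measure $|k|^2d|k|\sim\lambda^{1/2}d\lambda$ combined with Plancherel on dyadic ranges $\lambda\sim 2^m$ gives $\sum_m 2^{m/2}\min(s^2,2^{-m}s)$, which is $\sim s^{3/2}$ in the squared norm. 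Integrating over $s\in[0,t]$ then yields $\|D_1\|\lesssim o(1)\,t^{5/4}$.

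\textbf{Step 2 (phase replacement).} With the constant coefficient $g_0$, we must remove $e^{-\I sV_\mu}$, so we estimate
\[
D_2:=g_0\int_0^t(e^{-\I sV_\mu}-1)\,G_s\,ds,\qquad G_s:=\int_0^s e^{\I\tau\Delta}\delta_0\,d\tau .
\]
Since $|e^{-\I sV_\mu}-1|\le\min(2,s|V_\mu|)$, we need a bound on $\|\min(1,s|x|^{-1})G_s\|_{L^2}$. The Fourier symbol of $G_s$ is $(1-e^{-\I s|k|^2})/(\I|k|^2)$, so $G_s$ is essentially $s^{-1/2}$ times a profile in $x/\sqrt s$ that decays like $|x|^{-1}$ near scale $\sqrt s$. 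More precisely, $G_s(x)=s^{-1/2}\Phi(x/\sqrt s)$, where $\Phi$ is bounded by $|y|^{-1}$ and is oscillatory and decaying at infinity. Then $\|\min(1,s/|x|)G_s\|_{L^2}$ splits into the region $|x|\lesssim s$, with weight $1$ and $|G_s|\lesssim|x|^{-1}$, contributing $\lesssim s^{1/2}$, and the region $|x|\gtrsim s$, with weight $s/|x|$, contributing roughly $s\,\| |x|^{-1}G_s\|_{L^2(|x|>s)}\lesssim s\cdot s^{-1/2}=s^{1/2}$ (up to logarithms). Both are $o(s^{1/4})$, so $\|D_2\|\lesssim t^{3/2}\log(1/t)=o(t^{5/4})$.

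\textbf{Conclusion and main obstacle.} By the reverse triangle inequality, the left side of \cref{eq:exact_delta_contribution_t54_lower} is at least $4\pi|\psi_\ast(0)|C_\Delta t^{5/4}-\|D_1\|-\|D_2\|\ge 2\pi|\psi_\ast(0)|C_\Delta t^{5/4}$ for $t$ small. This gives $c_{\delta,\psi_\ast}=2\pi|\psi_\ast(0)|C_\Delta$. The main obstacle is Step 1: the lemma that $\|\int_0^s c(\tau)e^{\I\tau\Delta}\delta_0\,d\tau\|_{L^2}\lesssim\|c\|_\infty s^{1/4}$ for merely bounded coefficients. I would prove it by the dyadic Plancherel argument in $\lambda=|k|^2$. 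If that proves delicate, the fallback is to exploit the smoothness of $c$ in $\tau$ coming from $\psi_\ast\in H^2$ and integrate by parts in $\tau$.
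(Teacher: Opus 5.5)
\textbf{Gap in Step 1.} The estimate you rely on, $\bigl\|\int_0^s c(\tau)e^{i\tau\Delta}\delta_0\,d\tau\bigr\|_{L^2}\lesssim \|c\|_{L^\infty}s^{1/4}$ for merely bounded $c$, is false. On the Fourier side the squared norm is a constant times $\int_0^\infty |\widehat{c\mathbf 1_{[0,s]}}(\lambda)|^2\lambda^{1/2}\,d\lambda$. That is essentially the $\dot H^{1/4}$ seminorm in time of $c\mathbf 1_{[0,s]}$, and $L^\infty$ does not control it. For $c(\tau)=e^{iM\tau}$ with $M\gg s^{-1}$, the mass concentrates near $\lambda=M$ and the norm is $\gtrsim M^{1/4}s^{1/2}$, which is unbounded in $M$ while $\|c\|_\infty=1$. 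Your dyadic count fails for the same reason. Plancherel gives only $\int_{\lambda\sim 2^m}|\widehat{c\mathbf 1_{[0,s]}}|^2\,d\lambda\lesssim s\|c\|_\infty^2$, with no decay in $m$, so $\sum_m 2^{m/2}\min(2^m s^2,s)$ diverges at high frequency. The $s^{3/2}$ you record for the squared norm would even contradict the exact size $\|G(s,\cdot)\|_{L^2}=c\,s^{1/4}$ of the constant-coefficient term. So uniform smallness of $g_{t,s,\sigma,\tau}(0)-g_0$ gives nothing. What is needed is regularity of the coefficient in $\tau$. (The $e^{-iE_0(t-s)}$ part of your difference is harmless, because it is $\tau$-independent and is handled by $\|G(s,\cdot)\|\lesssim s^{1/4}$. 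The problem is the part $(e^{i(s-\tau)\Delta}\psi_\ast)(0)-\psi_\ast(0)$.)

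\textbf{What the fallback is missing.} Your fallback points in the right direction, but for $\psi_\ast\in H^2$ the coefficient $q(r)=(e^{ir\Delta}\psi_\ast)(0)$ is in general not $C^1$. Its formal derivative $i(e^{ir\Delta}\Delta\psi_\ast)(0)$ is a point value of an $L^2$ function. The paper closes this gap in \cref{lem:replace_delta_coefficient_lower_bound} with two ingredients you do not have.
\begin{enumerate}
\item For $f\in C_c^\infty$, write $q_f(s-\tau)-q_f(0)=\int_0^{s-\tau}q_f'(r)\,dr$ and apply Fubini. This turns the inner integral into $\int_0^s q_f'(r)\,G(s-r,\cdot)\,dr$, which is $O(s^{5/4})$ in $L^2$ by $\|G(s,\cdot)\|\lesssim s^{1/4}$.
\item Pass to $\psi_\ast\in H^2$ by density, using a \emph{uniform} bound $\lesssim t^{5/4}\|h\|_{H^2}$ on the delta term with coefficient $(e^{i(s-\tau)\Delta}h)(0)$. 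That bound depends on the structure of this coefficient, for instance through the commutator upper bound minus the non-delta Duhamel pieces. It does not follow from $|c|\lesssim\|h\|_{H^2}$.
\end{enumerate}

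\textbf{The rest of the proposal.} Your Step 2 is correct, and it is a genuinely different and sharper route than the paper's rescaling plus dominated convergence. Since $G_s(x)=s^{-1/2}\Phi(x/\sqrt s)$ with $|\Phi(y)|\lesssim|y|^{-1}$, you get $|G_s(x)|\lesssim|x|^{-1}$ uniformly in $s$. Splitting at $|x|=s$ gives $\|\min(1,s/|x|)G_s\|_{L^2}\lesssim s^{1/2}$, hence $\|D_2\|\lesssim t^{3/2}$ with no logarithm, a quantitative rate in place of the paper's $o(t^{5/4})$. The final reverse-triangle step also matches the paper.
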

\begin{proof}
By the delta replacement result proved in \cref{app:delta-replacement-proofs}, the exact delta contribution satisfies
\begin{equation}
\label{eq:delta_replaced_by_free_term_proof}
\begin{aligned}
-4\pi\int_0^t\int_0^s
e^{-\I sV_\mu}
e^{\I\tau\Delta}
\left(
g_{t,s,\sigma,\tau}(0)\delta_0
\right)
\,d\tau\,ds                        =-4\pi e^{-\I E_0\sigma}\psi_\ast(0)F_\Delta(t,\cdot)
+r_{\delta,\sigma}(t),
\end{aligned}
\end{equation}
where $\|r_{\delta,\sigma}(t)\|_{L^2(\RR^3)}
=o(t^{5/4})$ as $t\to0$. Thus, by the reverse triangle inequality and \cref{lem:free_delta_contribution_scaling},
\begin{equation}
\begin{aligned}
&\left\|
-4\pi
\int_0^t\int_0^s
e^{-\I sV_\mu}
e^{\I\tau\Delta}
\left(
g_{t,s,\sigma,\tau}(0)\delta_0
\right)
\,d\tau\,ds
\right\|_{L^2(\RR^3)}\\
&\quad\ge
\left\|
4\pi e^{-\I E_0\sigma}\psi_\ast(0)F_\Delta(t,\cdot)
\right\|_{L^2(\RR^3)}
-\|r_{\delta,\sigma}(t)\|_{L^2(\RR^3)} =
4\pi|\psi_\ast(0)|C_\Delta t^{5/4}
-o(t^{5/4}),
\end{aligned}
\end{equation}
since $|e^{-\I E_0\sigma}|=1$. Because $\psi_\ast(0)\neq0$ and
$C_\Delta>0$, we can choose
$t_{\delta,\mu,\psi_\ast,\sigma}>0$ sufficiently small so that, for
$0<t<t_{\delta,\mu,\psi_\ast,\sigma}$, we have $o(t^{5/4})\le
2\pi|\psi_\ast(0)|C_\Delta t^{5/4}$.
Hence \cref{eq:exact_delta_contribution_t54_lower} follows.
\end{proof}

\subsection{Lower-Order Estimates for the Non-delta Contributions}
\label{sec:lower-bound-nondelta}
We now estimate the second and third summands in
\cref{eq:lower_error_three_term_decomposition}. 
Since $V_\mu$ is real-valued,
the Borel functional calculus implies that $e^{-\I sV_\mu}$ is multiplication
by $e^{-\I sV_\mu(x)}$; see \cite[Theorem VIII.5]{ReedSimon1980}. Hence
$\|e^{-\I sV_\mu}f\|_{L^2(\RR^3)}
=\|f\|_{L^2(\RR^3)},\ 
f\in L^2(\RR^3)$,
so the left Yukawa phase does not affect the $L^2$ upper estimates below.
The second summand
$\mu^2V_\mu g_{t,s,\sigma,\tau}$ is the zeroth-order non-delta contribution.
Its proof is a standard estimate using $\mu^2V_\mu\in L^2(\RR^3)$, free unitarity, and the Sobolev embedding $H^2(\RR^3)\hookrightarrow L^\infty(\RR^3)$; we state the result here and give the proof in \cref{app:non_delta}. The third summand $2\nabla V_\mu\cdot\nabla g_{t,s,\sigma,\tau}$ is the first-order non-delta contribution, which is the more singular and thus leading non-delta term. Its proof uses duality together with Kato smoothing; we include this proof below.

\begin{lemma}[Non-delta estimate of the zeroth-order term ($\Delta v$)]\label[lemma]{lem:exterior_zero_order}
Let $V_\mu(x)$ be the Yukawa potential as in \cref{eq:yukawa_nbody_potential}, and let $t>0$. For $f\in H^2(\mathbb R^3)$, 
there exists a constant $C_\mu>0$, depending only on $\mu$, such that
\begin{equation}
\left\| \int_0^t\int_0^s e^{i\tau\Delta}\mu^2V_\mu e^{i(s-\tau)\Delta}f \,d\tau\,ds \right\|_{L^2(\mathbb R^3)} \le C_\mu t^2\|f\|_{H^2(\mathbb R^3)}.
\end{equation}
\end{lemma}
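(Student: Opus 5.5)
The plan is to bound the double integral crudely by moving the $L^2$ norm inside both time integrals and controlling each integrand uniformly in $(s,\tau)$. The area of the triangle $\{0\le\tau\le s\le t\}$ is $t^2/2$, so it suffices to show, uniformly in $0\le\tau\le s\le t$,
\[
\left\|e^{\I\tau\Delta}\mu^2V_\mu e^{\I(s-\tau)\Delta}f\right\|_{L^2(\RR^3)}\le C_\mu\|f\|_{H^2(\RR^3)}.
\]
First, by Minkowski's integral inequality (the integrand is a continuous $L^2$-valued function of $(s,\tau)$, so the Bochner integral makes sense), the norm of the double integral is at most $\int_0^t\int_0^s\|\cdot\|\,d\tau\,ds$. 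Second, unitarity of $e^{\I\tau\Delta}$ on $L^2$ removes the outer propagator.

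Next, write $g=e^{\I(s-\tau)\Delta}f$. By Hölder,
\[
\|\mu^2V_\mu g\|_{L^2}\le \mu^2\|V_\mu\|_{L^2}\|g\|_{L^\infty}.
\]
Here $\|V_\mu\|_{L^2}^2=4\pi\int_0^\infty e^{-2\mu r}\,dr=2\pi/\mu<\infty$. The exponential decay is used exactly at this point: the Coulomb tail $1/r$ would not be square integrable at infinity. The singularity $r^{-2}$ at the origin of $|V_\mu|^2$ is integrable against $r^2\,dr$.

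Then apply the Sobolev embedding $H^2(\RR^3)\hookrightarrow L^\infty(\RR^3)$ to get $\|g\|_{L^\infty}\le C_{\mathrm{Sob}}\|g\|_{H^2}$. Since $e^{\I(s-\tau)\Delta}$ commutes with $-\Delta$ and is unitary, $\|g\|_{H^2}=\|f\|_{H^2}$ in the norm \cref{eq:H2_norm_yukawa_nbody}. Combining the steps gives the integrand bound with $C_\mu=C_{\mathrm{Sob}}\mu^2\sqrt{2\pi/\mu}$. Integrating over the triangle then yields $\tfrac12 C_\mu t^2\|f\|_{H^2}$, which is the claim after renaming the constant.

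There is no real obstacle here. The only point needing care is justifying the interchange of norm and integral, which holds because $\tau\mapsto e^{\I\tau\Delta}h$ is strongly continuous and $\mu^2V_\mu$ maps $H^2\subset L^\infty$ boundedly into $L^2$. This estimate is of lower order than the $t^{5/4}$ delta term because $V_\mu$ itself, unlike $\Delta V_\mu$, is a genuine $L^2$ function.
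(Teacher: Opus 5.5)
Your proposal is correct and follows the paper's proof step for step. The sequence is the same: Minkowski's integral inequality with unitarity of $e^{i\tau\Delta}$, then H\"older using $\|V_\mu\|_{L^2}^2=2\pi/\mu$, then the Sobolev embedding $H^2(\mathbb R^3)\hookrightarrow L^\infty(\mathbb R^3)$ with $H^2$-norm preservation by the free flow, and finally integration over the triangle $0\le\tau\le s\le t$. Your extra justification of the Bochner integral via strong continuity is a harmless refinement that the paper leaves implicit.
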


\begin{lemma}[Non-delta estimate of the first-order term $(\nabla v\cdot\nabla)$]\label[lemma]{lem:exterior_first_order}
Under the notation of \cref{lem:exterior_zero_order}, 
there exists a constant
$\widetilde C_\mu>0$, depending only on $\mu$, such that, for $0<t\le 1$,
\begin{equation}
\left\|
2\int_0^t\int_0^s
e^{i\tau\Delta}
\bigl(\nabla V_\mu\cdot\nabla\bigr)
e^{i(s-\tau)\Delta}f
\,d\tau\,ds
\right\|_{L^2(\mathbb R^3)}
\le
\widetilde C_\mu t^{3/2}
\|f\|_{H^2(\mathbb R^3)} .
\end{equation}
\end{lemma}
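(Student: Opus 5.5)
We need the bound $t^{3/2}$ for the first-order term. The proof goes by duality (\cref{lem:op_norm_dual}) combined with the free Kato smoothing estimate (\cref{lem:kato}). Test the double integral against an arbitrary $\phi\in L^2(\RR^3)$ with $\|\phi\|_{L^2}\le1$. The inner product becomes
\[
2\int_0^t\int_0^s\left\langle |x|^{-1}e^{-\I\tau\Delta}\phi,\ |x|\nabla V_\mu\cdot\nabla e^{\I(s-\tau)\Delta}f\right\rangle d\tau\,ds .
\]
Note that $\nabla V_\mu(x)=-(1+\mu|x|)e^{-\mu|x|}\,x/|x|^3$ in the attractive convention. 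Hence $|x|\,|\nabla V_\mu(x)|\le C_\mu|x|^{-1}$, and in fact $\le|x|^{-1}$ since $(1+r)e^{-r}\le1$. So we split the singular weight $|x|^{-2}$ symmetrically: one factor $|x|^{-1}$ goes to each side of the pairing.

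\textbf{Step 1 (the $f$ side).} We need to control $\||x|^{-1}\nabla e^{\I(s-\tau)\Delta}f\|_{L^2}$ uniformly in time. By Hardy's inequality, or by \cref{lem:hls_operator} applied as $\||x|^{-1}g\|\le2\||p|g\|$, we get
\[
\||x|^{-1}\nabla e^{\I(s-\tau)\Delta}f\|\le 2\||p|^2 e^{\I(s-\tau)\Delta} f\|=2\|\Delta f\|\le2\|f\|_{H^2},
\]
using unitarity of the free flow and its commutation with $p$. This costs no time decay.

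\textbf{Step 2 (the $\phi$ side).} Swap the integration order over the triangle $0\le\tau\le s\le t$, so that the integrand is integrated in $s\in[\tau,t]$ for each fixed $\tau$. Cauchy--Schwarz in $x$ gives the pointwise bound
\[
|\cdot|\le 2\,\||x|^{-1}e^{-\I\tau\Delta}\phi\|_{L^2_x}\cdot 2\|f\|_{H^2}.
\]
Integrating in $s$ over an interval of length at most $t$ leaves
\[
4t\|f\|_{H^2}\int_0^t\||x|^{-1}e^{-\I\tau\Delta}\phi\|_{L^2_x}\,d\tau .
\]
Cauchy--Schwarz in $\tau$ bounds this by
\[
4t\cdot t^{1/2}\,\||x|^{-1}e^{-\I\tau\Delta}\phi\|_{L^2_\tau L^2_x}\,\|f\|_{H^2}.
\]
Kato smoothing (\cref{lem:kato}, applied to $e^{-\I\tau\Delta}$, i.e.\ time reversal or complex conjugation) bounds the last norm by $C\|\phi\|_{L^2}$. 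Taking the supremum over $\phi$ gives the claimed $\widetilde C_\mu t^{3/2}\|f\|_{H^2}$. The restriction $t\le1$ is not even needed here, beyond harmlessness.

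\textbf{Main obstacle.} The only delicate point is rigor in manipulating the pairing. The term $\nabla V_\mu\cdot\nabla g$ is in $L^2$ only marginally: since $|\nabla V_\mu|\sim|x|^{-2}$ is not in $L^2_{\mathrm{loc}}$, this fails for general $g\in H^2$ unless $g$ is handled in the weighted sense. Therefore I would first prove the estimate for $f\in C_c^\infty$ and $\phi\in C_c^\infty$, where every pairing is an absolutely convergent integral by Fubini. Then I would extend by density, interpreting the double integral as the bounded operator so obtained, which is consistent with its use in \cref{eq:lower_error_three_term_decomposition}. Measurability in $\tau$ and the application of Kato smoothing with the reversed propagator are routine.
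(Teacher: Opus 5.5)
Your proof is correct and is essentially the paper's argument. You use duality, the componentwise HLS/Hardy bound $\||x|^{-1}\nabla e^{i(s-\tau)\Delta}f\|_{L^2}\le 2\|f\|_{H^2}$ on the $f$ side, and Cauchy--Schwarz in time together with Kato smoothing for $|x|^{-1}e^{-i\tau\Delta}\phi$ on the test-function side.

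The differences are minor simplifications:
\begin{itemize}
\item You dominate all of $\nabla V_\mu$ by $|x|^{-2}$ using $(1+\mu|x|)e^{-\mu|x|}\le 1$. The paper instead splits off the $\mu e^{-\mu|x|}|x|^{-1}$ piece and bounds it separately by $t^2$. Avoiding the split gives you a $\mu$-independent constant and removes the need for $t\le 1$.
\item You integrate in $s$ before $\tau$, whereas the paper integrates in $\tau$ first.
\item The sign you wrote for $\nabla V_\mu$ is reversed for the attractive potential; it should be $+(1+\mu|x|)e^{-\mu|x|}x/|x|^3$. This is harmless, since only $|\nabla V_\mu|$ enters the argument.
\end{itemize}
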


\begin{proof}
Using the explicit formula for the Yukawa potential, we have, for $x\neq 0$,
\begin{equation}
\nabla V_\mu(x)
=\frac{e^{-\mu |x|}}{|x|^2}(1+\mu |x|)\frac{x}{|x|}
=\frac{e^{-\mu |x|}}{|x|^2}\frac{x}{|x|}
+\mu\frac{e^{-\mu |x|}}{|x|}\frac{x}{|x|}.
\end{equation}
Hence
$|\nabla V_\mu(x)|
\le e^{-\mu |x|}/|x|^2
+\mu e^{-\mu |x|}/|x|$.
For $f\in H^2(\mathbb R^3)$, set
\begin{equation}
A_t f :=
2\int_0^t\int_0^s
e^{i\tau\Delta}
(\nabla V_\mu\cdot\nabla)
e^{i(s-\tau)\Delta}f
\,d\tau\,ds .
\end{equation}
We split $A_tf=A_t^{(1)}f+A_t^{(2)}f$, where $A_t^{(1)}f$ is the contribution
from $\mu e^{-\mu |x|}|x|^{-1}\nabla$ and $A_t^{(2)}f$ is the contribution
from $e^{-\mu |x|}|x|^{-2}\nabla$. The less singular part $A_t^{(1)}f$ is estimated directly in $L^2$: by Minkowski's inequality and the unitarity of $e^{i\tau\Delta}$ on $L^2$, we have

\begin{equation}\label{eq:At_1}
\begin{aligned}
\|A_t^{(1)}f\|_{L^2}
&\le
2\mu\int_0^t\int_0^s
\left\|
e^{i\tau\Delta}
\left(
\frac{e^{-\mu |x|}}{|x|}
\frac{x}{|x|}\cdot
\nabla e^{i(s-\tau)\Delta}f
\right)
\right\|_{L^2}
\,d\tau\,ds \\
&=
2\mu\int_0^t\int_0^s
\left\|
\frac{e^{-\mu |x|}}{|x|}
\frac{x}{|x|}\cdot
\nabla e^{i(s-\tau)\Delta}f
\right\|_{L^2}
\,d\tau\,ds 
\le
2\mu\int_0^t\int_0^s
\left\|
\frac{1}{|x|}
\nabla e^{i(s-\tau)\Delta}f
\right\|_{L^2}
\,d\tau\,ds .
\end{aligned}
\end{equation}
By \cref{lem:hls_operator}, applied componentwise with $\phi=\partial_{x_j}e^{i(s-\tau)\Delta}f$, we have
\begin{equation}\label{eq:less_sing_hls_p2}
\left\|
\frac{1}{|x|}
\nabla e^{i(s-\tau)\Delta}f
\right\|_{L^2}^2
=\sum_{j=1}^3
\left\|
\frac{1}{|x|}
\partial_{x_j}e^{i(s-\tau)\Delta}f
\right\|_{L^2}^2 
\le 4\sum_{j=1}^3
\left\|
|p|\partial_{x_j}e^{i(s-\tau)\Delta}f
\right\|_{L^2}^2 
=4\left\|
|p|^2e^{i(s-\tau)\Delta}f
\right\|_{L^2}^2 .
\end{equation}
Since $|p|^2$ commutes with $e^{i(s-\tau)\Delta}$, we get
\begin{equation}\label{eq:less_sing_p2_commute}
\left\|
|p|^2e^{i(s-\tau)\Delta}f
\right\|_{L^2(\mathbb R^3)}
=\left\|
|p|^2f
\right\|_{L^2(\mathbb R^3)}
\le
\|f\|_{H^2(\mathbb R^3)}.
\end{equation}
Combining \cref{eq:less_sing_hls_p2,eq:less_sing_p2_commute}, we obtain
\begin{equation}\label{eq:less_sing_grad_bound}
\left\|
\frac{1}{|x|}
\nabla e^{i(s-\tau)\Delta}f
\right\|_{L^2(\mathbb R^3)}
\le C\|f\|_{H^2(\mathbb R^3)}.
\end{equation}
Therefore, substituting \cref{eq:less_sing_grad_bound} into \cref{eq:At_1}, we have
\begin{equation} \label{eq:J1_less_sing}
\|A_t^{(1)}f\|_{L^2}
\le
C_\mu
\int_0^t\int_0^s
\|f\|_{H^2}
\,d\tau\,ds
\le
C_\mu t^2\|f\|_{H^2}.
\end{equation}
It remains to estimate the more singular contribution $A_t^{(2)}f$,
coming from $e^{-\mu |x|}|x|^{-2}\nabla$. This is the term for which
the direct $L^2$ argument above is not sufficient, and we estimate it
by duality and Kato smoothing.
By
\cref{lem:op_norm_dual}, it suffices to estimate the $L^2(\mathbb R^3)$ pairing
\begin{equation}
\langle g, A_t^{(2)}f\rangle_{L^2(\mathbb R^3)},
\quad
g\in L^2(\mathbb R^3),\quad f\in H^2(\mathbb R^3).
\end{equation}
Equivalently, it is enough to prove that
$|\langle g,A_t^{(2)}f\rangle_{L^2(\mathbb R^3)}|
\le
\widetilde C_\mu t^{3/2}
\|g\|_{L^2(\mathbb R^3)}
\|f\|_{H^2(\mathbb R^3)}$
for every $g\in L^2(\mathbb R^3)$. Taking the supremum over
$\|g\|_{L^2}\le 1$ then gives the desired $L^2$ estimate.
Since $(e^{i\tau\Delta})^\ast=e^{-i\tau\Delta}$, we have
$\left\langle
g,e^{i\tau\Delta}q
\right\rangle_{L^2}
=\left\langle
e^{-i\tau\Delta}g,q
\right\rangle_{L^2}$
for every $q\in L^2(\mathbb R^3)$.
For $0\le \tau\le s\le t$, we write
$h_\tau:=e^{-i\tau\Delta}g,
\quad
u_{s,\tau}:=e^{i(s-\tau)\Delta}f$.
Then
\begin{equation}
|\langle g,A_t^{(2)}f\rangle_{L^2}|
\le
2\int_0^t\int_0^s
\int_{\mathbb R^3}
\frac{e^{-\mu |x|}}{|x|^2}
|h_\tau(x)|\,|\nabla u_{s,\tau}(x)|
\,dx\,d\tau\,ds .
\end{equation}
By the pointwise bound on $\nabla v$ and the Cauchy-Schwarz inequality in the spatial variable, 
\begin{equation}\label{eq:I1_bound} \begin{aligned} 
2\int_0^s
\left|
\left\langle
h_\tau,\,
\frac{e^{-\mu |x|}}{|x|^2}
\frac{x}{|x|}\cdot\nabla u_{s,\tau}
\right\rangle_{L^2}
\right|
\,d\tau  
&\le
2\int_0^s
\int_{\mathbb R^3}
\frac{e^{-\mu |x|}}{|x|^2}
|h_\tau(x)|\,|\nabla u_{s,\tau}(x)|
\,dx\,d\tau  \\
&\le
2\int_0^s
\left\|
\frac{e^{-\mu |x|}}{|x|}h_\tau
\right\|_{L^2(\mathbb R^3)}
\left\|
\frac{1}{|x|}\nabla u_{s,\tau}
\right\|_{L^2(\mathbb R^3)}
\,d\tau .
\end{aligned} 
\end{equation}
By the same componentwise HLS estimate used above in \cref{eq:less_sing_grad_bound}, applied to $u_{s,\tau}=e^{i(s-\tau)\Delta}f$, we have
$\||x|^{-1}\nabla u_{s,\tau}\|_{L^2(\RR^3)}
\le C\|f\|_{H^2(\RR^3)}$. Hence \cref{eq:I1_bound} gives
\begin{equation}\label{eq:J1_reduced}
2\int_0^s
\int_{\RR^3}
\frac{e^{-\mu|x|}}{|x|^2}
|h_\tau(x)|\,|\nabla u_{s,\tau}(x)|
\,dx\,d\tau                           \le
\tilde C
\|f\|_{H^2(\RR^3)}
\int_0^s
\left\|
\frac{e^{-\mu|x|}}{|x|}h_\tau
\right\|_{L^2(\RR^3)}
\,d\tau,
\end{equation}
where $h_\tau:=e^{-i\tau\Delta}g$.
To estimate the time integral, note first that $e^{-\mu|x|}\le 1$, so
\begin{equation}
\left\|
\frac{e^{-\mu|x|}}{|x|}e^{-i\tau\Delta}g
\right\|_{L^2(\mathbb R^3)}
\le
\left\|
\frac{1}{|x|}e^{-i\tau\Delta}g
\right\|_{L^2(\mathbb R^3)}.
\end{equation}
Hence, by H\"older's inequality in the time variable 
\begin{equation}
\int_0^s
\left\|
\frac{e^{-\mu|x|}}{|x|}e^{-i\tau\Delta}g
\right\|_{L^2}
\,d\tau
\le
\int_{\mathbb R}
\mathbf \chi_{[0,s]}(\tau)
\left\|
\frac{1}{|x|}e^{-i\tau\Delta}g
\right\|_{L_x^2}
\,d\tau 
\le
\|\mathbf \chi_{[0,s]}(\tau)\|_{L_t^2}
\left\|
\frac{1}{|x|}e^{-i\tau\Delta}g
\right\|_{L_t^2L_x^2}.
\end{equation}
and the Kato smoothing in \cref{lem:kato} applies equally to $e^{-i\tau\Delta}$ by the change of variables $\tau\mapsto-\tau$, so
\begin{equation}\label{eq:kato}
    \left\|
\frac{1}{|x|}e^{-i\tau\Delta}g
\right\|_{L_t^2L_x^2} \leq \tilde{C}\|g\|_{L_x^2}=\tilde{C}\|g\|_{L^2}.
\end{equation}
We conclude that
\begin{equation} \label{eq:I1_kato}
\int_0^s
\left\|
\frac{e^{-\mu|x|}}{|x|}e^{-i\tau\Delta}g
\right\|_{L^2(\mathbb R^3)}
\,d\tau
\le
\tilde{C}s^{1/2}\|g\|_{L^2(\mathbb R^3)}.
\end{equation}
Combining \cref{eq:J1_reduced} and \cref{eq:I1_kato}, we obtain that the contribution of the term $e^{-\mu |x|}|x|^{-2}\nabla$ is bounded by
$\tilde C_\mu s^{1/2}
\|g\|_{L^2(\mathbb R^3)}
\|f\|_{H^2(\mathbb R^3)}$.
Integrating the above bound in $s$, we get
\begin{equation}
|\langle g,A_t^{(2)}f\rangle_{L^2}|
\le
\tilde C_\mu
\int_0^t s^{1/2}\,ds\,
\|g\|_{L^2(\mathbb R^3)}
\|f\|_{H^2(\mathbb R^3)} 
\le
\tilde C_\mu t^{3/2}
\|g\|_{L^2(\mathbb R^3)}
\|f\|_{H^2(\mathbb R^3)}.
\end{equation}
Taking the supremum over all $g\in L^2(\mathbb R^3)$ with
$\|g\|_{L^2}\le 1$ gives
$\|A_t^{(2)}f\|_{L^2}
\le
\tilde C_\mu t^{3/2}\|f\|_{H^2}$.
Combining this estimate with \cref{eq:J1_less_sing}, we obtain
\begin{equation}
\|A_tf\|_{L^2}
\le
C_\mu t^2\|f\|_{H^2}+\tilde C_\mu t^{3/2}\|f\|_{H^2}
\le
\widetilde C_\mu t^{3/2}\|f\|_{H^2},
\quad 0<t\le1.
\end{equation}

\end{proof}

\subsection{Proof of \cref{thm:main-one-body-yukawa-lower}} \label{subsubsec: pf main lower}
Recall the one-step error $E_\mu(t):=U_{1,\mu}(t)-e^{-\I tH_\mu}$.
Since $\psi_\ast$ is a ground state of $H_\mu$, there exists a ground-state
energy $E_0\in\RR$ such that $H_\mu\psi_\ast=E_0\psi_\ast$.
Taking $\sigma=0$ and $\psi_0=\psi_\ast$ in \cref{eq:lower_error_three_term_decomposition}, the error $E_\mu(t)\psi_\ast$ is the sum of the delta contribution, the zeroth-order non-delta contribution, and the first-order non-delta contribution.
By \cref{thm:exact_delta_contribution_t54}, the delta contribution is bounded from below by $c_{\delta,\psi_\ast}t^{5/4}$ for all sufficiently small $t>0$.
By \cref{lem:exterior_zero_order,lem:exterior_first_order}, the two non-delta contributions are bounded from above by $C_{\mu,\psi_\ast}(t^2+t^{3/2})$. Hence, by the reverse triangle inequality,
$\|E_\mu(t)\psi_\ast\|_{L^2(\RR^3)}
\ge
c_{\delta,\psi_\ast}t^{5/4}
-C_{\mu,\psi_\ast}(t^2+t^{3/2})$
for all sufficiently small $t>0$.
Since $t^2+t^{3/2}=o(t^{5/4})$ as $t\to0$, we may choose
$t_{0,\mu,\psi_\ast}>0$ such that, for all
$0<t<t_{0,\mu,\psi_\ast}$, we have $C_{\mu,\psi_\ast}(t^2+t^{3/2})
\le
\frac12 c_{\delta,\psi_\ast}t^{5/4}$.
Therefore,
$\|E_\mu(t)\psi_\ast\|_{L^2(\RR^3)}
\ge
\frac12 c_{\delta,\psi_\ast}t^{5/4}$.
Since $E_\mu(t)\psi_\ast
=U_{1,\mu}(t)\psi_\ast
-e^{-\I tH_\mu}\psi_\ast$,
we obtain
\begin{equation}
\left\|
U_{1,\mu}(t)\psi_\ast
-e^{-\I tH_\mu}\psi_\ast
\right\|_{L^2(\RR^3)}
\ge
c_{\psi_\ast}t^{5/4},
\quad
0<t<t_{0,\mu,\psi_\ast},
\end{equation}
with $c_{\psi_\ast}:=\frac12c_{\delta,\psi_\ast}>0$. This proves
\cref{thm:main-one-body-yukawa-lower}.

\section{Numerical method}\label{sect: numerics}
\subsection{Spherical harmonics reduction and Galerkin discretization for the Yukawa Hamiltonian}\label{subsec:yukawa_galerkin}
In this subsection we describe the numerical discretization used for the Yukawa Hamiltonian. Since the Yukawa potential is radial, the angular and radial variables separate naturally in spherical coordinates. We first expand the solution in the spherical harmonic basis on $S^2$, then reduce the dynamics to the radial $\ell=0$ sector for radial initial data, and finally apply an exact Galerkin projection in the corresponding one-dimensional radial variable.

We consider the time-dependent Schr\"odinger equation
\begin{equation}\label{eq:yukawa_tdse_full}
    i\partial_t \psi(x,t)
    =H_Y\psi(x,t),
    \quad
    H_Y
    :=-\Delta-Z\frac{e^{-\mu |x|}}{|x|},
    \quad x\in\mathbb{R}^3,
\end{equation}
where $Z>0$ and $\mu>0$. Writing $x=r\omega$ with
    $r:=|x|\in(0,\infty)$, and
    $\omega:=x/|x|\in S^2$,
separates the spatial variable into its radial and angular parts. Since the Yukawa potential depends only on $r$, the angular dependence is naturally expanded in the spherical harmonic basis on $S^2$. The spherical harmonics $Y_\ell^m(\omega)$ are the standard orthonormal eigenfunctions of the angular Laplacian $\Delta_{S^2}$ on the unit sphere. Since they are defined on $S^2$, they depend only on the angular variable $\omega$.
In these coordinates, the Laplacian takes the standard form
\begin{equation}\label{eq:laplacian_spherical_yukawa}
    \Delta=\partial_{rr}+\frac{2}{r}\partial_r+\frac{1}{r^2}\Delta_{S^2},
\end{equation}
where $\Delta_{S^2}$ denotes the Laplace-Beltrami operator on the unit sphere.

Let
\begin{equation}
\{Y_\ell^m:\ \ell=0,1,2,\dots,\ -\ell\le m\le \ell\}
\end{equation}
be the standard spherical harmonics, which form an orthonormal family in $L^2(S^2)$:
\begin{equation}\label{eq:sh_orthogonality_yukawa}
    \langle Y_\ell^m, Y_{\ell'}^{m'}\rangle_{L^2(S^2)}   =\int_{S^2}Y_\ell^m(\omega)\,\overline{Y_{\ell'}^{m'}(\omega)}
    \,d\omega
=\delta_{\ell\ell'}\delta_{mm'}.
\end{equation}
They satisfy the eigenvalue equation
\begin{equation}\label{eq:sh_eigenvalue_yukawa}
    -\Delta_{S^2}Y_\ell^m
    =\ell(\ell+1)Y_\ell^m.
\end{equation}
We therefore expand the solution as
\begin{equation}\label{eq:spherical_harmonic_expansion_yukawa}
    \psi(r,\omega,t)
=\sum_{\ell=0}^{\infty}\sum_{m=-\ell}^{\ell}u_{\ell m}(r,t)Y_\ell^m(\omega).
\end{equation}
Substituting \cref{eq:spherical_harmonic_expansion_yukawa} into \cref{eq:yukawa_tdse_full}, using \cref{eq:laplacian_spherical_yukawa} and \cref{eq:sh_eigenvalue_yukawa}, and then multiplying by $\overline{Y_{\ell'}^{m'}}$ and integrating over $S^2$, the orthonormality relation \eqref{eq:sh_orthogonality_yukawa} isolates the $(\ell',m')$ component. Thus the angular modes decouple, and each coefficient $u_{\ell m}$ satisfies
\begin{equation}\label{eq:ulm_radial_equation_yukawa}
    i\partial_t u_{\ell m}(r,t)
    =-\left(\partial_{rr}u_{\ell m}(r,t)+\frac{2}{r}\partial_r u_{\ell m}(r,t)-\frac{\ell(\ell+1)}{r^2}u_{\ell m}(r,t)
    \right)-Z\frac{e^{-\mu r}}{r}u_{\ell m}(r,t).
\end{equation}
Thus the radial Yukawa potential does not mix different spherical harmonic modes. In the numerical experiment, we choose radial initial data $\psi(x,0)=\psi_0(|x|)$. Equivalently, in the expansion \cref{eq:spherical_harmonic_expansion_yukawa}, only the mode $(\ell,m)=(0,0)$ is nonzero initially. By the decoupling in \cref{eq:ulm_radial_equation_yukawa}, all higher angular modes remain zero for all times, so the radial sector is invariant under the Yukawa flow. Therefore, it suffices to study the coefficient $u_{00}(r,t)$, which solves
\begin{equation}\label{eq:u00_equation_yukawa}
    i\partial_t u_{00}(r,t)
    =-\left(\partial_{rr}u_{00}(r,t)+\frac{2}{r}\partial_r u_{00}(r,t)\right)-Z\frac{e^{-\mu r}}{r}u_{00}(r,t).
\end{equation}
To remove the first-order radial derivative, we introduce
\begin{equation}\label{eq:w_definition_yukawa}
    w(r,t):=ru_{00}(r,t).
\end{equation}
Then $u_{00}(r,t)=r^{-1}w(r,t)$, and a direct computation gives
\begin{equation}\label{eq:radial_identity_yukawa}
    \partial_{rr}\!\left(\frac{w}{r}\right)+\frac{2}{r}\partial_r\!\left(\frac{w}{r}\right)
    =\frac{1}{r}w''.
\end{equation}
Multiplying \cref{eq:u00_equation_yukawa} by $r$ and using \cref{eq:radial_identity_yukawa}, we obtain the one-dimensional radial equation
\begin{equation}\label{eq:w_radial_pde_yukawa}
    i\partial_t w(r,t)
    = -\,w''(r,t) - Z\frac{e^{-\mu r}}{r}\,w(r,t),
    \quad r>0.
\end{equation}
For the numerical approximation we truncate the radial domain to $(0,R)$ and impose homogeneous Dirichlet boundary conditions
$w(0,t)=w(R,t)=0$.
Accordingly, we consider the radial Yukawa Hamiltonian on $(0,R)$ with homogeneous Dirichlet boundary conditions,
\begin{equation}\label{eq:truncated_radial_operator_yukawa}
H_R:=-\frac{d^2}{dr^2}-Z\frac{e^{-\mu r}}{r}.
\end{equation}
We next discretize \cref{eq:w_radial_pde_yukawa} by a Galerkin projection. 
On the truncated interval $(0,R)$, after the transformation \cref{eq:w_definition_yukawa}, the $\ell=0$ radial problem is naturally associated with the one-dimensional Dirichlet Laplacian $-\frac{d^2}{dr^2}$. 
We therefore use the standard orthonormal eigenfunction basis
\begin{equation}\label{eq:sine_basis_yukawa}
    \chi_n(r)
    :=\sqrt{\frac{2}{R}}
    \sin\!\left(\frac{n\pi r}{R}\right),
    \qquad n\in\mathbb{N},
\end{equation}
which satisfies
\begin{equation}\label{eq:sine_basis_orthogonality_yukawa}
    \int_0^R \chi_m(r)\chi_n(r)\,dr = \delta_{mn}, \quad \mbox{and} \quad
    -\chi_n''(r)=\left(\frac{n\pi}{R}\right)^2\chi_n(r),
    \quad
    \chi_n(0)=\chi_n(R)=0.
\end{equation}
Equivalently, this basis corresponds to the $\ell=0$ radial spherical Bessel modes after the transformation \cref{eq:w_definition_yukawa}. Indeed, the radial part of the separated problem is naturally expressed in terms of spherical Bessel functions, and for $\ell=0$ the regular radial mode is proportional to
$j_0(z)=\sin z/z$.
Hence, after the transformation \cref{eq:w_definition_yukawa}, these modes become proportional to sine functions on $(0,R)$.
For a fixed truncation parameter $K\in\mathbb{N}$, we approximate the solution in $\operatorname{span}\{\chi_1,\dots,\chi_K\}$
and seek an approximate solution of the form
\begin{equation}\label{eq:wK_ansatz_yukawa}
    w_K(r,t)=\sum_{n=1}^K c_n(t)\chi_n(r).
\end{equation}
Using the ansatz \cref{eq:wK_ansatz_yukawa}, we have
\begin{equation}
\partial_t w_K
=\sum_{n=1}^K \dot c_n(t)\chi_n,
\quad
w_K''
=\sum_{n=1}^K c_n(t)\chi_n''.
\end{equation}
Substituting this expansion into \cref{eq:w_radial_pde_yukawa}, taking the $L^2(0,R)$ inner product with $\chi_m$, and using
\begin{equation}
\langle \chi_m,\chi_n\rangle_{L^2(0,R)}=\delta_{mn},
\quad
-\chi_n''=\left(\frac{n\pi}{R}\right)^2\chi_n,
\end{equation}
we obtain
\begin{align}
    i\dot c_m(t)
    &=
    \left\langle
        \chi_m,
        - w_K''(\cdot,t)
    \right\rangle_{L^2(0,R)}
    +
    \left\langle
        \chi_m,
        -Z\frac{e^{-\mu r}}{r}w_K(\cdot,t)
    \right\rangle_{L^2(0,R)}
    \notag\\
    &=
    \sum_{n=1}^K
    \left[
        \left(\frac{n\pi}{R}\right)^2\delta_{mn}
        -
        \frac{2Z}{R}
        \int_0^R
        \frac{e^{-\mu r}}{r}
        \sin\left(\frac{m\pi r}{R}\right)
        \sin\left(\frac{n\pi r}{R}\right)\,dr
    \right]
    c_n(t).
    \label{eq:cm_equation_yukawa}
\end{align}
Writing
    $c(t):=(c_1(t),\dots,c_K(t))^\top\in\mathbb{C}^K$,
we obtain the semidiscrete system
    $i\dot c(t)=(A_K+B_K)c(t)$,
where
\begin{equation}\label{eq:AK_yukawa}
    (A_K)_{mn}
    =\left(\frac{n\pi}{R}\right)^2\delta_{mn},
\end{equation}
and
\begin{equation}\label{eq:BK_integral_yukawa}
    (B_K)_{mn}
    =-\frac{2Z}{R}
    \int_0^R
    \frac{e^{-\mu r}}{r}
    \sin\left(\frac{m\pi r}{R}\right)
    \sin\left(\frac{n\pi r}{R}\right)\,dr.
\end{equation}
This system is semidiscrete in the sense that the radial variable has been discretized by the Galerkin projection, whereas time remains continuous and will be discretized subsequently by the Trotter scheme. The matrix $A_K+B_K$ is the Galerkin representation of the radial Yukawa Hamiltonian \cref{eq:truncated_radial_operator_yukawa} on $(0,R)$.
Using the trigonometric identity
    $2\sin\alpha\sin\beta
    =\cos(\alpha-\beta)-\cos(\alpha+\beta)$,
one may rewrite \cref{eq:BK_integral_yukawa} as
\begin{equation}\label{eq:BK_preF_yukawa}
    (B_K)_{mn}
    =-\frac{Z}{R}
    \int_0^R
    \frac{e^{-\mu r}}{r}
    \left[
        \cos\left(\frac{(m-n)\pi r}{R}\right)
        -\cos\left(\frac{(m+n)\pi r}{R}\right)
    \right]dr.
\end{equation}
For $p=0,1,2,\dots$, define
\begin{equation}\label{eq:Fp_definition_yukawa}
    F_p
    :=\int_0^R
    \frac{e^{-\mu r}}{r}
    \left(
        1-\cos\!\left(\frac{p\pi r}{R}\right)
    \right)\,dr.
\end{equation}
Then $F_0=0$, and \cref{eq:BK_preF_yukawa} becomes
\begin{equation}\label{eq:BK_F_formula_yukawa}
    (B_K)_{mn}
    =
    -\frac{Z}{R}\bigl(F_{m+n}-F_{|m-n|}\bigr).
\end{equation}
This is the representation used in the implementation.
Setting
    $\kappa_p:=p\pi/R$,
for $p\geq 1$ one may evaluate \cref{eq:Fp_definition_yukawa} explicitly as
\begin{equation}\label{eq:Fp_closed_form_yukawa}
    F_p
    =\Re\left[
        \log\left(\frac{\mu-i\kappa_p}{\mu}\right)
        +
        \operatorname{Ei}(-\mu R)
        -
        \operatorname{Ei}\!\bigl(-(\mu-i\kappa_p)R\bigr)
    \right],
\end{equation}
where $\operatorname{Ei}$ denotes the exponential integral.

Let
   $ H_K:=A_K+B_K$.
For a given initial coefficient vector $c^0\in\mathbb{C}^K$ and time step size $s=T/N$, the exact semidiscrete evolution and the first-order splitting approximation at time $T>0$ are
\begin{equation}
    c_{\mathrm{ex}}(T)
    =e^{-iT H_K}c^0, \quad
    c_{\mathrm{TS}}(T)
    :=\bigl(e^{-isB_K}e^{-isA_K}\bigr)^N c^0.
    \label{eq:lie_trotter_yukawa}
\end{equation}
We measure the corresponding numerical error by
\begin{equation}\label{eq:discrete_error_yukawa}
    \xi_N^{(K)}(T;c^0)
    :=\bigl\|
        c_{\mathrm{TS}}(T)-c_{\mathrm{ex}}(T)
    \bigr\|_{\ell^2}.
\end{equation}
This is the quantity plotted in the numerical experiments below.

\subsection{Numerical Results}
\label{subsec:numerical_results}
In this subsection, we present numerical experiments for the first-order splitting error of the Yukawa Hamiltonian. We first study the global error under increasing radial Galerkin resolution, then examine the local one-step error for
Coulomb and Yukawa potentials, and finally isolate the dependence on the Yukawa screening parameter $\mu$.

All experiments use the radial $\ell=0$ Galerkin discretization derived in \cref{subsec:yukawa_galerkin}. Thus the radial problem is reduced to the
one-dimensional equation on $(0,R)$ by the transformation $w(r,t)=ru_{00}(r,t)$ and then discretized in the sine basis. The Galerkin matrices are those in
\cref{eq:AK_yukawa,eq:BK_F_formula_yukawa,eq:Fp_closed_form_yukawa}, and the global error is measured by $\xi_L^{(K)}(T;c^0)$ in \cref{eq:discrete_error_yukawa}. Here and below, $L$ in the plots denotes the number of splitting steps, not the particle number.
We first consider the global-in-time experiment at final time $T=1$, with $Z=1$, $\mu=0.2$, and $R=50$. The initial condition is a common approximate radial Yukawa ground state: it is computed at the largest reference truncation and then projected and renormalized in each smaller Galerkin space. Thus the curves represent the same underlying physical state viewed at different radial resolutions, so their differences mainly reflect the effect of increasing the number of modes.

\begin{figure}[!htb]
    \centering
    \subfloat[Yukawa potential]{
        \includegraphics[width=.47\textwidth]{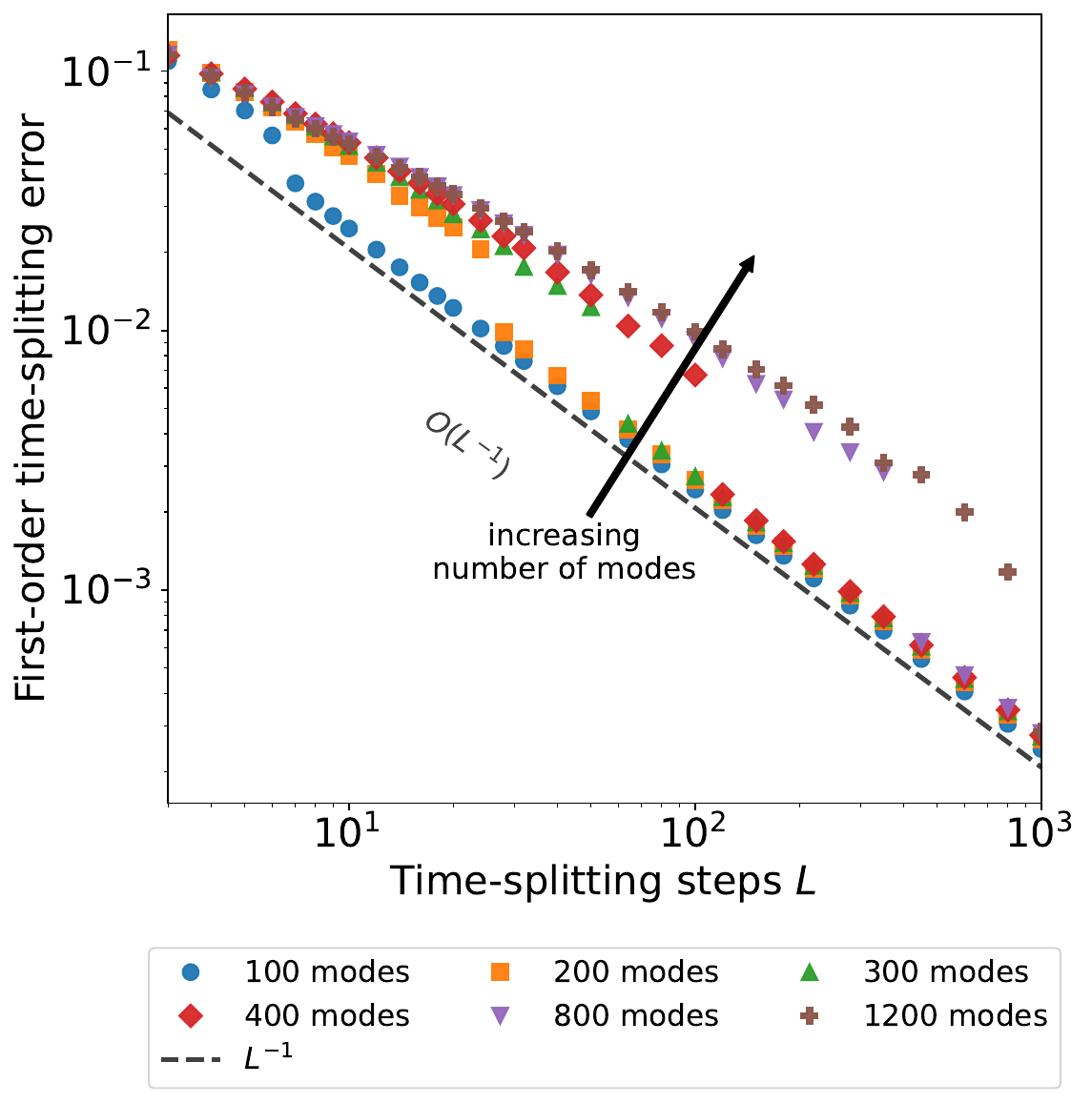}
        \label{fig:fig1}
    }
    \hfill
    \subfloat[Coulomb potential]{
        \includegraphics[width=.47\textwidth]{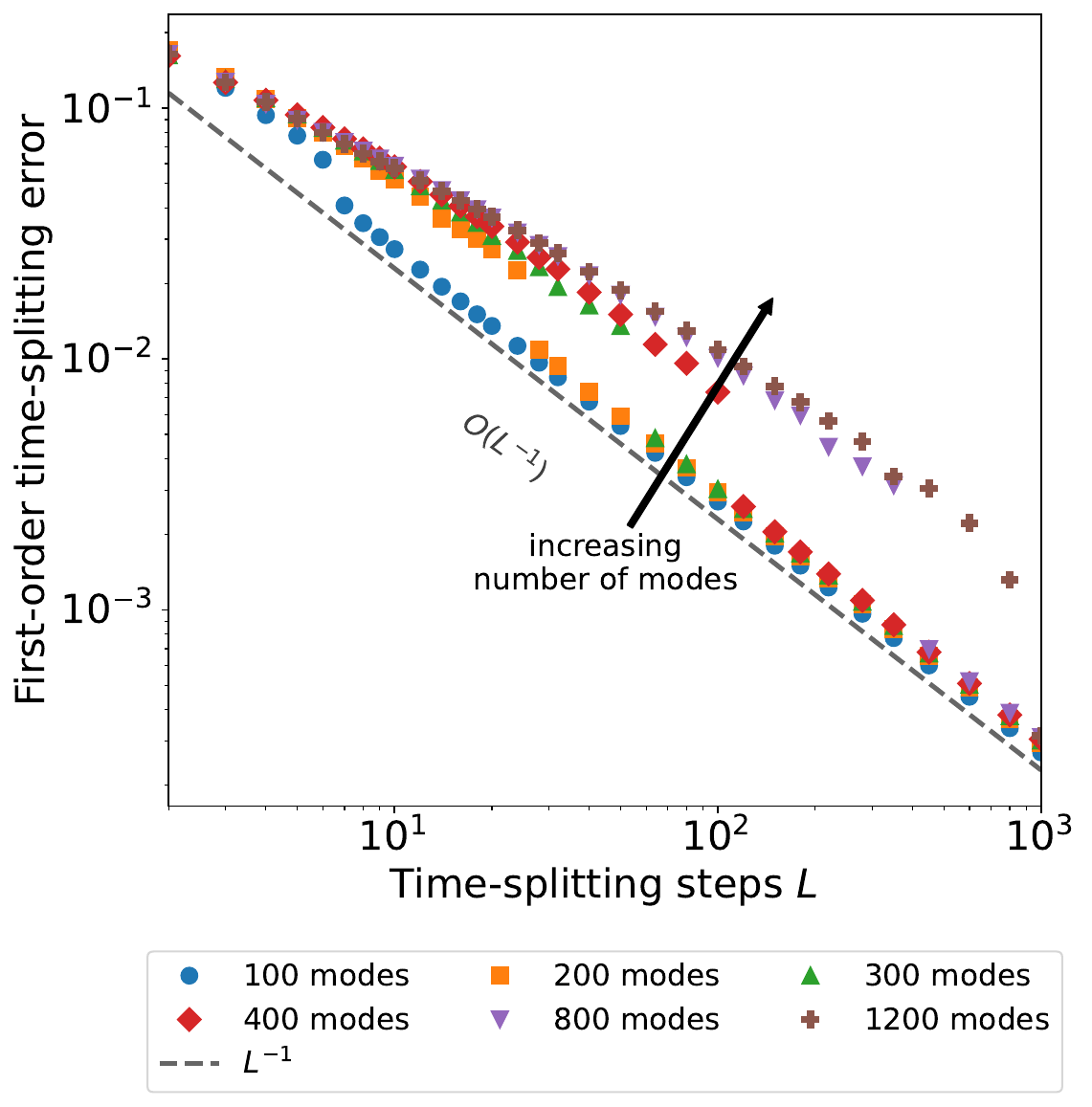}
        \label{fig:fig1_2}
    }
    \caption{(a) Log-log plot of the global first-order splitting error
$\xi_L^{(K)}(T;c^0)=\|c_{\mathrm{TS}}(T)-c_{\mathrm{ex}}(T)\|_{\ell^2}$
versus the number of splitting steps $L$ for the radial Yukawa Hamiltonian at $T=1$, with $Z=1$, $\mu=0.2$, $R=50$, and
$K\in\{100,200,300,400,800,1200\}$. The initial state is a common approximate Yukawa ground state obtained from the largest reference truncation and projected onto each smaller Galerkin space. The dashed line has slope $L^{-1}$ and is included only as a finite-dimensional reference. As $K$ increases, the curves move upward and the apparent transition toward the fixed $K$ first-order regime is delayed. This panel is not intended as direct evidence of a global $L^{-1/4}$ asymptotic.
(b) The same experiment for the radial Coulomb Hamiltonian, with the same $\ell=0$ radial Galerkin discretization, $Z=1$, $R=50$, and $K\in\{100,200,300,400,800,1200\}$. For comparison, we use the same projected initial vector as in panel (a). The Coulomb curves show the same qualitative delayed transition as in the Yukawa case, with slightly larger errors for higher-mode truncations.}
\label{fig:global-singular-comparison}
\end{figure}

The numerical presentation of \cref{fig:global-singular-comparison} closely follows that of \cite{BurgarthFacchiHahnJohnssonYuasa2024}. 
Specifically, we repeat the Coulomb test from \cite{BurgarthFacchiHahnJohnssonYuasa2024} and perform an analogous test for the Yukawa potential for comparison.
For each fixed
truncation level $K$, the semidiscrete problem is finite dimensional, so the
standard $L^{-1}$ regime eventually appears. As $K$ increases, the curves move upward and the onset of this finite-dimensional regime is delayed, indicating that higher radial modes reveal more of the singular infinite-dimensional behavior. The Coulomb panel shows the same qualitative trend as the Yukawa panel, with slightly larger errors for higher truncations. Since the initial state is an approximate ground state, these global curves are not intended to saturate the worst-case $H^2$ rate; the one-step singular mechanism is tested more directly below.
\begin{figure}[!htb]
 \centering
    \centering
    \subfloat[Regularized Gaussian Hamiltonian]{
\includegraphics[height=.34\textheight]{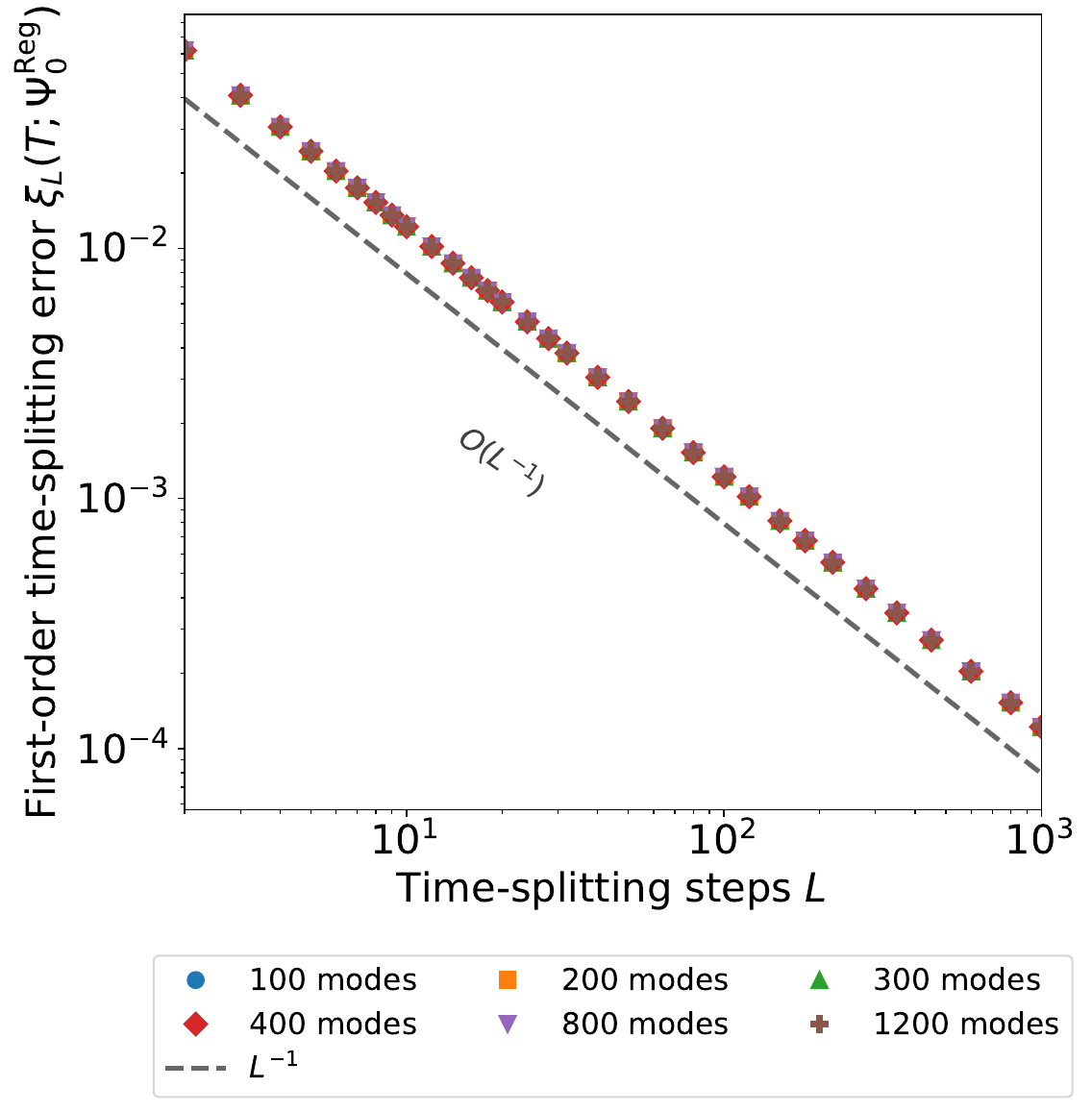}
\label{fig:regular_gaussian_trotter}
    }
     \hfill
    \subfloat[Coulomb and Yukawa local error]{ \includegraphics[height=.34\textheight]{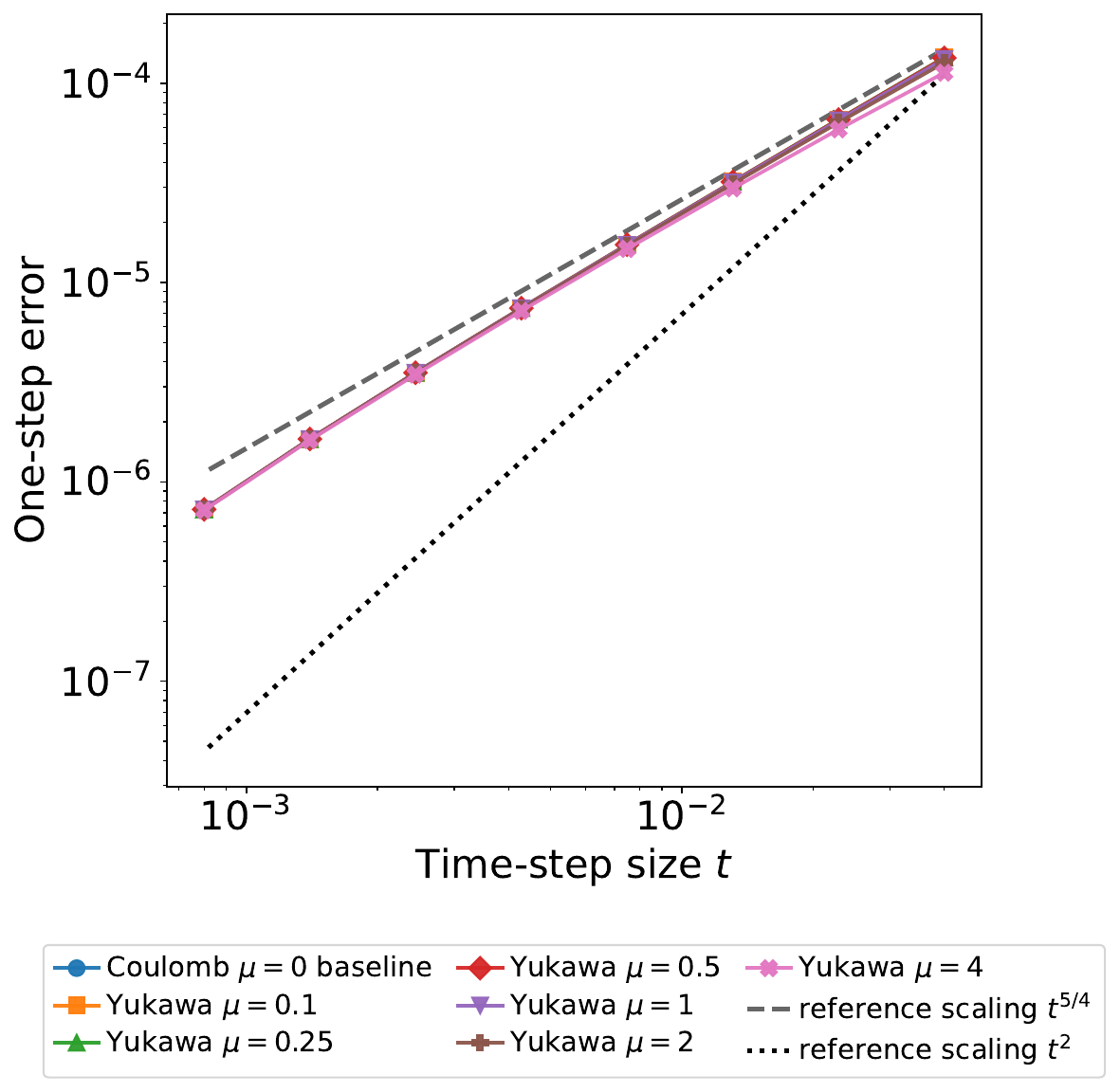}
        \label{fig:fig2}
    }
    \caption{(a) Log-log plot of the global first-order splitting error for the regularized Gaussian Hamiltonian
$V_{\mathrm{Reg}}(r)=-Ze^{-\alpha r^2}$ under the same $\ell=0$ radial Galerkin discretization, with $Z=1$, $\alpha=0.2$, $R=50$, and
$K\in\{100,200,300,400,800,1200\}$. The initial state is a common approximate regularized Gaussian ground state obtained from a reference truncation $K_{\mathrm{ref}}=1200$ and projected onto each smaller Galerkin space. The curves nearly overlap and follow the dashed $L^{-1}$ reference line, indicating standard first-order splitting behavior for this smooth regular potential.
(b) Log-log plot of the one-step local error $\|E_\mu(\Delta t)\|_{\ell^2}$ versus $\Delta t$ for Coulomb $(\mu=0)$ and Yukawa
$\mu\in\{0.1,0.25,0.5,1,2,4\}$, computed with $Z=1$, $R=50$, $K=1200$, and the rough initial state
$c_n^0=C_\alpha n^{-(5/2+\varepsilon)}e^{i\theta_n}$ with $\varepsilon=0.03$ and $\|c^0\|_{\ell^2}=1$. The Coulomb and Yukawa curves nearly overlap and follow the $\Delta t^{5/4}$ reference slope, indicating that the leading local error is governed by the common $1/r$ singularity. The $\Delta t^2$ line is included only for comparison.}
\label{fig:regular-and-rough-local}
\end{figure}

The regularized Gaussian comparison in \cref{fig:regular_gaussian_trotter} separates singular effects from
discretization effects. For the smooth potential $V_{\mathrm{Reg}}(r)=-Ze^{-\alpha r^2}$, the curves for different truncation levels nearly overlap and follow the standard $L^{-1}$ reference line,
consistent with the behavior of regular potentials~\cite{AnFangLin2021}.
Thus the order reduction observed for Yukawa and Coulomb is not a generic artifact of the radial Galerkin discretization, but is tied to the Coulomb-type singularity at the origin and the associated regularity of the initial state.
The Coulomb comparison is useful because Coulomb and Yukawa share the same $1/r$ singularity at the origin, while Yukawa removes the long-range tail. The analysis and the plots are consistent with the same worst-case $L^{-1/4}$ singular mechanism despite exponential screening, indicating that the dominant order reduction is caused by the local singularity rather than by the long-range tail alone. This does not contradict improved convergence for special initial
states: as in the Coulomb case, additional regularity or cancellation can lead to better observed rates, and similar regularity-driven improvement may occur for suitable Yukawa states; see, for example, \cite[Fig.~2]{BurgarthFacchiHahnJohnssonYuasa2024} and \cite{FangWu2026}.

We next study the local one-step error $E_\mu(\Delta t):=e^{-i\Delta tH_\mu}c^0
-e^{-i\Delta tB_\mu}e^{-i\Delta tA}c^0$, where $H_\mu=A+B_\mu$ is the Galerkin Hamiltonian. To reveal the singular local mechanism, we use the rough initial state
\begin{equation}
    c_n^0
    =C_{\alpha}
    n^{-\alpha}e^{i\theta_n},
    \quad
    \alpha=\frac52+\varepsilon,
    \quad
    \varepsilon=0.03,
\label{eq:rough_initial_state_numerics}
\end{equation}
with fixed phases $\theta_n$ and normalization $\|c^0\|_{\ell^2}=1$. This data is just above the $H^2$ threshold, since the sine basis $H^2$ norm contains weights comparable to $n^4$ and hence requires $\alpha>5/2$.
\begin{figure}[!htb]
    \centering
    \subfloat[One-step local Lie-Trotter splitting error]{
\includegraphics[width=.53\textwidth]{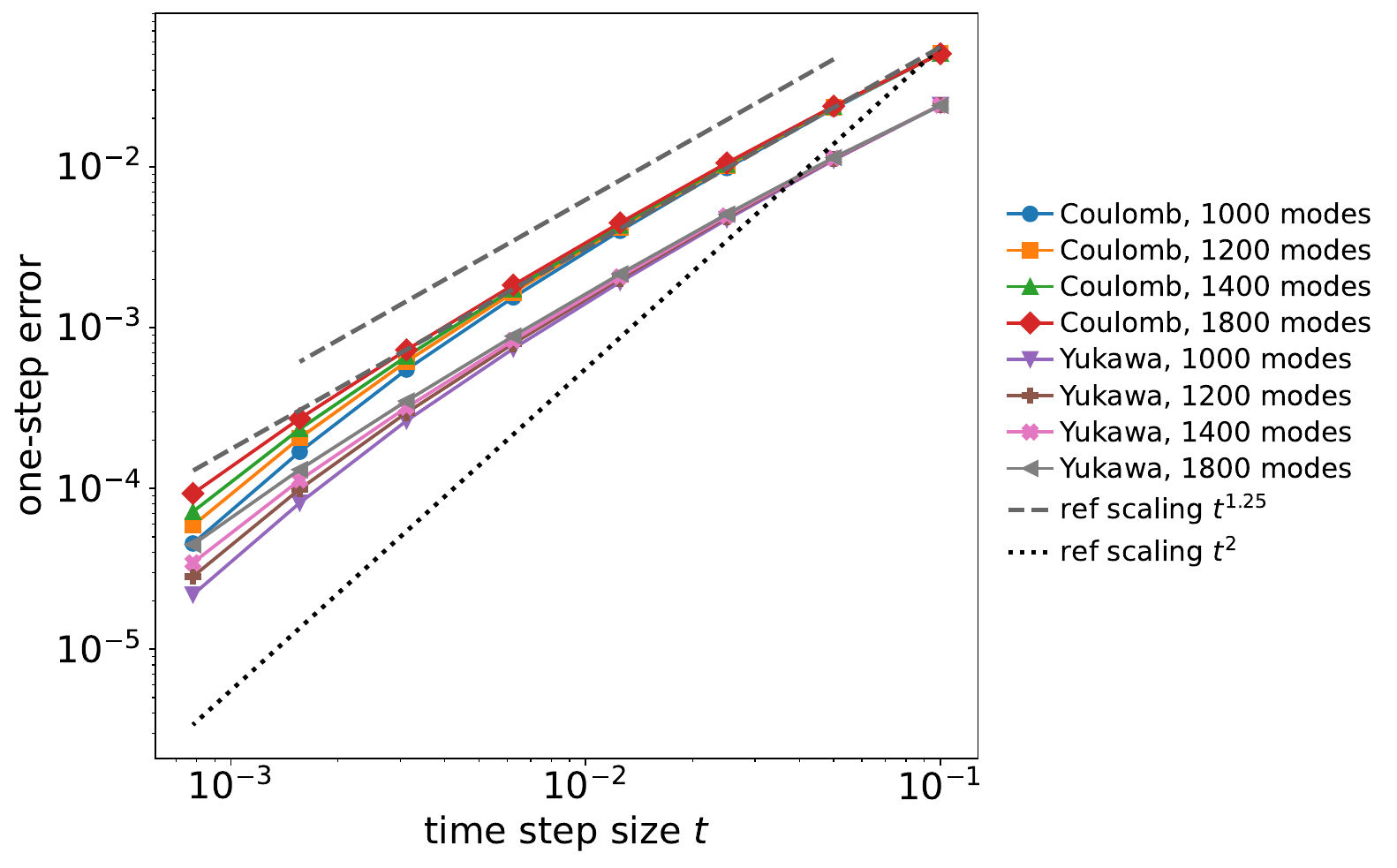}
        \label{fig:fig2_2}
    }
    \hspace{0.02\textwidth}
    \subfloat[Relative local-error norm]{
        \includegraphics[width=.38\textwidth]{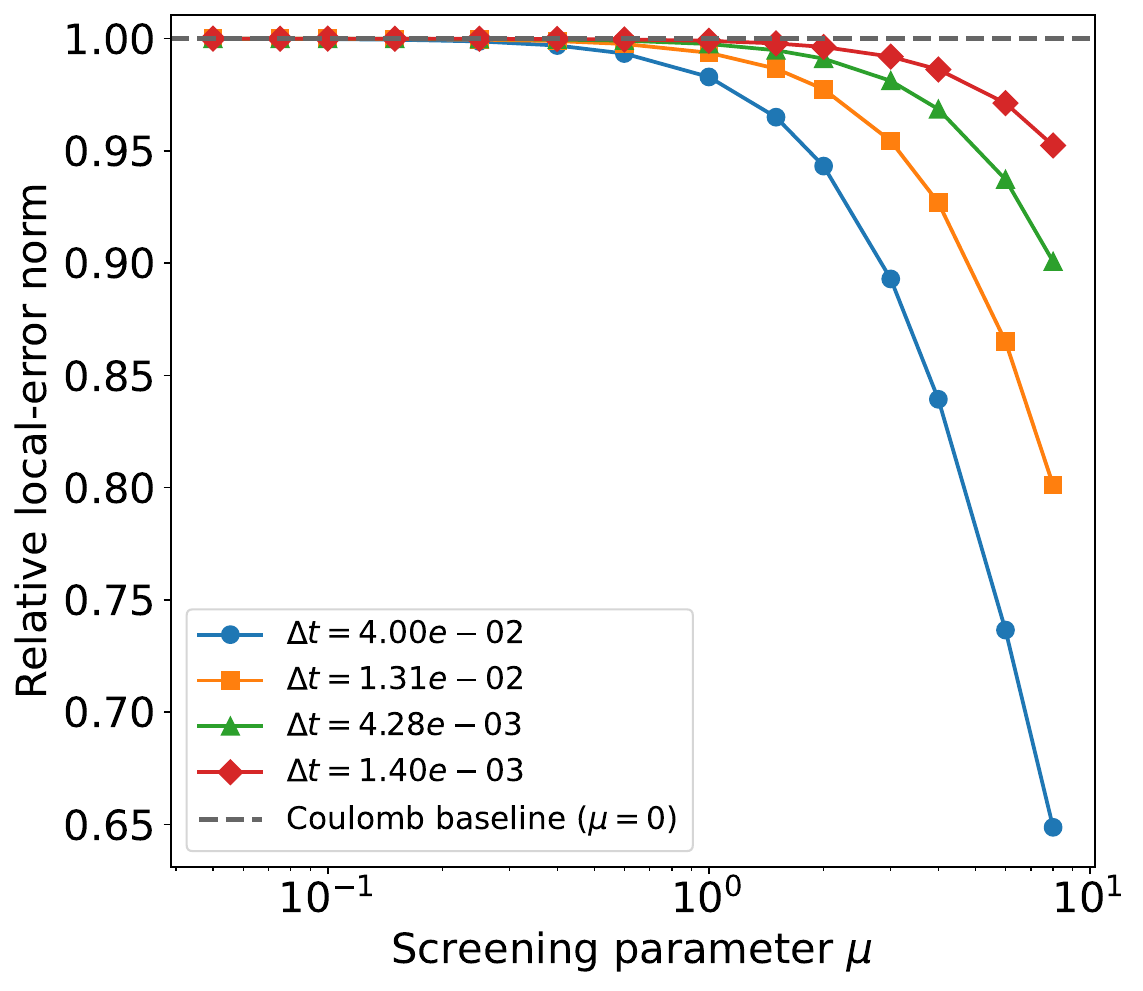}
        \label{fig:fig3_1}
    }
    \caption{(a) Log-log plot of the one-step local splitting error versus the time-step size $t$ for Coulomb and Yukawa potentials, using the corresponding normalized Galerkin ground state as the initial state for each potential. The radial Galerkin truncation uses $R=40$, $Z=1$, Yukawa screening parameter $\mu=1$, and $K\in\{1000,1200,1400,1800\}$. The reference slopes are $t^{5/4}$ and $t^2$. Both Coulomb and Yukawa follow the $t^{5/4}$ scaling; since the initial state is recomputed for each potential, the vertical separation should be interpreted qualitatively rather than as a fixed state screening prefactor.
(b) Semilog plot of the relative local-error norm
$R_\mu(\Delta t)=\|E_\mu(\Delta t)\|_{\ell^2}/\|e_0(\Delta t)\|_{\ell^2}$
as a function of the screening parameter $\mu$, using the fixed rough initial state from \cref{eq:rough_initial_state_numerics}. The dashed horizontal line is the Coulomb baseline $R_\mu=1$. Stronger screening reduces the local error prefactor, while smaller time steps remain closer to the Coulomb baseline because the leading singular contribution is Coulomb-like.}
\label{fig:ground-and-relative-local}
\end{figure}
The rough state local experiment in \cref{fig:fig2} shows that the Coulomb and
Yukawa curves nearly overlap and follow the $\Delta t^{5/4}$ reference slope.
This supports the lower bound mechanism: the leading local error is governed by the common near-origin singularity, since $e^{-\mu r}/r= 1/r-\mu+O(r)$ as $r\to0$.
Thus screening changes the regular part and the tail, but not the leading $1/r$ singularity. The local $\Delta t^{5/4}$ scaling is the one-step mechanism behind the global $1/4$ exponent through the power counting $L(\Delta t)^{5/4}=T(\Delta t)^{1/4}$.
The complementary ground state local experiment in \cref{fig:fig2_2} gives the
same qualitative message. For each truncation level $K$, both Coulomb and Yukawa ground state data remain aligned with the $t^{5/4}$ reference slope rather than the smooth $t^2$ slope. Since the ground state is recomputed for
each potential, vertical differences between the curves should be interpreted qualitatively rather than as a fixed state screening prefactor.

We finally isolate the effect of the screening parameter $\mu$ at fixed rough initial data. Let $e_0(\Delta t)$ denote the Coulomb local error and define
$R_\mu(\Delta t):=
\|E_\mu(\Delta t)\|_{\ell^2}/\|e_0(\Delta t)\|_{\ell^2}$.
Thus $R_\mu(\Delta t)=1$ is the Coulomb baseline. As shown in
\cref{fig:fig3_1}, the ratio stays close to $1$ for small $\mu$ and decreases for larger $\mu$, showing that stronger screening reduces the local error prefactor. For smaller time steps, the ratio remains closer to $1$, consistent with the leading local error being dominated by the common Coulomb-type singular contribution, while the $\mu$-dependent part enters as a more regular correction.

Together,
\cref{fig:global-singular-comparison,fig:regular-and-rough-local,fig:ground-and-relative-local}
support the theoretical picture: Yukawa screening changes constants,
prefactors, and lower-order corrections, as seen most clearly in the $\mu$-dependent diagnostics, but it does not change the leading Coulomb-type singular splitting behavior generated by the unchanged $1/r$ singularity at the origin.

\section{Conclusion and Remarks}\label{sect: conclusion}

In this work, we studied the first-order splitting method for Schr\"odinger
equations with Yukawa interactions. The Yukawa potential is a short-range
screened version of the Coulomb potential: it retains the Coulomb-type singularity at the origin while removing the long-range tail through exponential decay at spatial infinity. We proved that, in the many-body setting, the global splitting error has order $1/4$ in the time step for all initial data in $H^2(\RR^{3N})$, with explicit polynomial dependence on the particle number. This gives a system-size-explicit long-time estimate without regularizing the singular interaction.
We also proved a matching one-body local lower bound. More precisely, the one-step error has a sharp obstruction of order $t^{5/4}$, generated by the unchanged Coulomb-type singularity at the origin. The lower bound analysis
isolates the singular delta contribution coming from the distributional Laplacian of the Yukawa potential and shows that the remaining non-delta and
replacement terms are lower order. Thus, although Yukawa screening improves the behavior of the potential at spatial infinity and changes constants, prefactors, and lower-order corrections, it does not improve the leading
Coulomb-type splitting rate. In particular, no uniform global
$H^2\to L^2$ convergence rate better than $1/4$ can hold in general.

The theoretical results are further supported by the numerical experiments. The global experiments show that, as the spatial truncation level increases, the singular $L^{-1/4}$ behavior persists over a longer range before the eventual
finite-dimensional first-order regime appears. The local experiments show that
rough data near the $H^2$ threshold exhibit the predicted $t^{5/4}$ one-step
scaling, and the comparisons with Coulomb and regularized Gaussian potentials indicate that the order reduction is tied to the singularity at the origin rather than to the discretization itself. The $\mu$-dependent experiments also show that stronger screening reduces the error prefactor, while leaving the leading Coulomb-type singular scaling unchanged.

There are several natural directions for future work. First, it would be interesting to extend the analysis to higher-order splitting methods, where the
exact error representations and singular commutator structures are more complicated. Second, one may investigate more general singular and low-regularity potentials, especially short-range singular potentials for which the local
singularity and the behavior at infinity can be separated
\cite{BurgarthFacchiHahnJohnssonYuasa2024,BurgarthGalkeHahnvanLuijk2023,BeckerGalkeSalzmannLuijk2024,BaoWang2025,BaoMaWang2024,BaoWang2024,BaoWang2024b,Case1950,FrankLandSpector1971,Meetz1964,ShaoSu2023,AlamaBronsard2023}.
A third direction is to develop sharper lower bound theory, including fixed time
many step lower bounds and genuinely many-body lower bounds, where possible
cancellations between different time steps or different interacting pairs must
be controlled. Finally, it would be useful to connect the continuum estimates proved here with fully discrete numerical schemes by quantifying the interaction
between spatial discretization error and splitting error. We expect these ideas to be useful for singular short-range potentials and, more broadly, for splitting methods for unbounded Schr\"odinger operators.

\section*{Acknowledgments}
This work is supported by the NSF CAREER award DMS-2438074 (D.F. and J.Z.), and the U.S. Department of Energy, Office of Science, Accelerated Research in Quantum Computing Centers, Quantum Utility through Advanced Computational Quantum Algorithms, grant no. DE-SC0025572. The authors also thank the anonymous referee of the manuscript~\cite{FangWuSoffer2025} for suggesting the study of the Yukawa potential as an open question of importance in physical and chemical applications.

\bibliographystyle{unsrt}
\bibliography{yukawa}

\newpage
\appendix
\label{app:app}

\section{Proof of \cref{cor:no-better-rate}}\label{app:cor4}
\begin{proof}
Assume \cref{eq:forbidden-global-bound} held with some $\alpha>1/4$. Taking $L=1$ and $T=t=s$ gives
\begin{equation}
\norm{E_\mu(s)}_{H^2\to L^2}\le C s^{1+\alpha}.
\end{equation}
This contradicts Theorem~\ref{thm:main-one-body-yukawa-lower}, because $1+\alpha>5/4$.
\end{proof}
\section{Auxiliary estimates for the Yukawa Hamiltonian}
\label{app:yukawa-auxiliary}
\begin{lemma}[One-body Yukawa domain and $H^2$ flow bound]
\label[lemma]{lem:one-body-yukawa-H2-flow}
Let $H_\mu=-\Delta+V_\mu$ be the one-body Yukawa Hamiltonian as defined in \cref{eq:yukawa_nbody_potential}, where we fix $\mu>0$.
Then $H_\mu$ is self-adjoint on $H^2(\RR^3)$. Moreover, the graph norm
of $H_\mu$ is equivalent to the $H^2$ norm uniformly:
there exists $C_{\mu}>0$ such that
\begin{equation}
C_{\mu}^{-1}\|\psi\|_{H^2}
\le
\|H_\mu\psi\|_{L^2}+\|\psi\|_{L^2}
\le
C_{\mu}\|\psi\|_{H^2},
\quad
\psi\in H^2(\mathbb R^3).
\label{eq:one-body-yukawa-graph-norm}
\end{equation}
Consequently,
\begin{equation}
\|e^{-\I t H_\mu}\psi\|_{H^2}
\le
C_{\mu}\|\psi\|_{H^2},
\quad
t\in\RR,
\quad
\psi\in H^2(\RR^3).
\label{eq:one-body-yukawa-H2-flow}
\end{equation}
Moreover, $e^{-\I t H_\mu}$ is strongly continuous on
$H^2(\RR^3)$.
\end{lemma}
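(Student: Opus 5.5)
The plan is to establish an explicit relative bound of $V_\mu$ with respect to $-\Delta$, and then read off all four claims from it. Since $0\le e^{-\mu|x|}/|x|\le |x|^{-1}$, the Hardy inequality (equivalently \cref{lem:hls_operator}) gives $\|V_\mu\psi\|_{L^2}\le 2\|\nabla\psi\|_{L^2}$. By interpolation, $\|\nabla\psi\|^2=\langle\psi,-\Delta\psi\rangle\le\|\psi\|\,\|\Delta\psi\|$. Combining these, for every $\varepsilon>0$,
\[
\|V_\mu\psi\|_{L^2}\le \varepsilon\|\Delta\psi\|_{L^2}+\varepsilon^{-1}\|\psi\|_{L^2}.
\]
So $V_\mu$ is infinitesimally $-\Delta$-bounded. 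Since $V_\mu$ is real, it is symmetric, and the Kato--Rellich theorem yields self-adjointness of $H_\mu$ on $D(-\Delta)=H^2(\RR^3)$.

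Next I prove the graph-norm equivalence \cref{eq:one-body-yukawa-graph-norm}. The upper bound follows from the triangle inequality:
\[
\|H_\mu\psi\|\le\|\Delta\psi\|+\|V_\mu\psi\|\le 2\|\Delta\psi\|+\|\psi\|.
\]
For the lower bound, take $\varepsilon=1/2$. This gives $\|\Delta\psi\|\le\|H_\mu\psi\|+\tfrac12\|\Delta\psi\|+2\|\psi\|$, hence $\|\Delta\psi\|\le 2\|H_\mu\psi\|+4\|\psi\|$. Adding $\|\psi\|$ and comparing with the norm \cref{eq:H2_norm_yukawa_nbody} (which is equivalent to $\|\Delta\psi\|+\|\psi\|$ up to a factor $\sqrt2$) gives the lower bound with a numerical constant. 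Note that the constant here is in fact independent of $\mu$, because only the bound $V_\mu\le|x|^{-1}$ was used.

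For the flow bound \cref{eq:one-body-yukawa-H2-flow}, I use that $e^{-itH_\mu}$ preserves $D(H_\mu)=H^2$ and commutes with $H_\mu$ there, by the spectral theorem. Unitarity then gives $\|H_\mu e^{-itH_\mu}\psi\|=\|H_\mu\psi\|$ and $\|e^{-itH_\mu}\psi\|=\|\psi\|$. Sandwiching these between the two sides of the graph-norm equivalence yields
\[
\|e^{-itH_\mu}\psi\|_{H^2}\le C_\mu^2\|\psi\|_{H^2},
\]
and I rename the constant.

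For strong continuity on $H^2$, I again use the graph-norm equivalence. It reduces the claim to continuity in $t$ of $e^{-itH_\mu}\psi$ and of $H_\mu e^{-itH_\mu}\psi=e^{-itH_\mu}H_\mu\psi$ in $L^2$. Both follow from Stone's theorem (strong continuity of the unitary group on $L^2$), applied to $\psi$ and to $H_\mu\psi\in L^2$.

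I expect no real obstacle. The only point needing care is the precise constant in the Hardy-type bound $\||x|^{-1}\psi\|\le 2\|\nabla\psi\|$. This can be obtained from \cref{lem:hls_operator} via $\||x|^{-1}\psi\|=\||x|^{-1}|p|^{-1}\,|p|\psi\|\le 2\||p|\psi\|$, first for Schwartz $\psi$ and then by density in $H^2$.
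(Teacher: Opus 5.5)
Your proposal is correct and follows essentially the same route as the paper: Hardy's inequality plus interpolation give an infinitesimal relative bound, Kato--Rellich gives self-adjointness, and absorption gives the lower graph-norm bound. The flow bound comes from $\|H_\mu e^{-itH_\mu}\psi\|=\|H_\mu\psi\|$, and strong continuity on $H^2$ from $H_\mu e^{-itH_\mu}\psi=e^{-itH_\mu}H_\mu\psi$; your explicit constants and the remark that they are independent of $\mu$ agree with the paper's statement that the bound holds uniformly in $\mu$.
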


\begin{proof}
Since $|V_\mu(x)|\le |x|^{-1}$, Hardy's inequality and interpolation give, for every $\varepsilon>0$,
\begin{equation}
\|V_\mu\psi\|_{L^2}
\le\left\|\frac{\psi}{|x|}\right\|_{L^2}
\le2\|\nabla\psi\|_{L^2}
\le\varepsilon\|-\Delta\psi\|_{L^2}
+C_\varepsilon\|\psi\|_{L^2}.
\end{equation}
Hence $V_\mu$ is infinitesimally $-\Delta$-bounded, uniformly for $\mu>0$.
By the Kato-Rellich theorem, $H_\mu=-\Delta+V_\mu$ is self-adjoint on
$H^2(\mathbb R^3)$.
The upper graph norm bound follows from $H_\mu=-\Delta+V_\mu$ and the preceding infinitesimal bound. Conversely, for fixed sufficiently small $\varepsilon>0$,
\begin{equation}
\|-\Delta\psi\|_{L^2}
\le\|H_\mu\psi\|_{L^2}
+\|V_\mu\psi\|_{L^2}
\le\|H_\mu\psi\|_{L^2}
+\varepsilon\|-\Delta\psi\|_{L^2}
+C_\varepsilon\|\psi\|_{L^2}.
\end{equation}
Absorbing the $\varepsilon\|-\Delta\psi\|_{L^2}$ term gives the lower graph norm bound.

Finally, by the functional calculus for the self-adjoint operator $H_\mu$,
\begin{equation}
\|e^{-\I t H_\mu}\psi\|_{L^2}
=
\|\psi\|_{L^2},
\quad
\|H_\mu e^{-\I t H_\mu}\psi\|_{L^2}
=
\|H_\mu\psi\|_{L^2}.
\end{equation}
Using the graph norm equivalence, we obtain
\begin{equation}
\|e^{-\I t H_\mu}\psi\|_{H^2}
\le
C_{\mu}
\left(
\|H_\mu e^{-\I t H_\mu}\psi\|_{L^2}
+
\|e^{-\I t H_\mu}\psi\|_{L^2}
\right)
=
C_{\mu}
\left(
\|H_\mu\psi\|_{L^2}
+
\|\psi\|_{L^2}
\right)
\le
C_{\mu}\|\psi\|_{H^2}.
\end{equation}
This proves \cref{eq:one-body-yukawa-H2-flow}.
Finally, since the graph norm of $H_\mu$ is equivalent to the $H^2$ norm, strong continuity on $H^2$ follows from strong continuity in the graph norm. Indeed, for $\psi\in H^2(\RR^3)$,
\begin{equation}
\|e^{-\I tH_\mu}\psi-\psi\|_{L^2}\to0,
\quad
\|H_\mu(e^{-\I tH_\mu}\psi-\psi)\|_{L^2}
=\|(e^{-\I tH_\mu}-1)H_\mu\psi\|_{L^2}\to0.
\end{equation}
Thus $\|e^{-\I tH_\mu}\psi-\psi\|_{H^2}\to0$.
\end{proof}

\section{Auxiliary estimates for the proof of \cref{lem:pairwise-yukawa-commutator}}
\label[appendix]{app:pairwise-yukawa-commutator}

In this appendix, we establish the two auxiliary estimates \cref{lem:yukawa-relative-derivative-estimate} and  \cref{lem:yukawa-cutoff-estimates} used in the proof of \cref{lem:pairwise-yukawa-commutator}. The argument is the Yukawa analogue of the pairwise Coulomb cutoff estimate. The key ingredient is the set of cutoff estimates for $V_\mu(r)=e^{-\mu r}/r$, with constants depending on the fixed $\mu$. 

\begin{lemma}[Relative-coordinate derivative estimate]
\label[lemma]{lem:yukawa-relative-derivative-estimate}
Let $y,z\in\mathbb R^3$, $p_y=-i\nabla_y$, and $p_z=-i\nabla_z$. 
We write $\partial_{y_j-z_j}:=\frac12(\partial_{y_j}-\partial_{z_j})$ for the directional derivative associated with the relative coordinate $y_j-z_j$ under the unnormalized change of variables $(y,z)\mapsto(y-z,y+z)$.
Then, for $j=1,2,3$ and all
$g\in H^2(\mathbb R^6)$,
\begin{equation}
\left\|
|p_y|\partial_{y_j-z_j}g
\right\|^2
\le
\frac34\left\||p_y|^2g\right\|^2
+\frac14\left\||p_z|^2g\right\|^2 .
\label{eq:yukawa-relative-derivative-estimate}
\end{equation}
Consequently, in the original variables, with
$\partial_{(x_1-x_2)\cdot e_j}:=\frac12(\partial_{x_{1,j}}-\partial_{x_{2,j}})$,
we have
\begin{equation}
\left\|
|p_1|\partial_{(x_1-x_2)\cdot e_j}f
\right\|
\le
\left\||p_1|^2f\right\|
+\left\||p_2|^2f\right\|,
\quad j=1,2,3.
\label{eq:yukawa-relative-derivative-estimate-x}
\end{equation}
\end{lemma}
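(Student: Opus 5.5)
The plan is to prove \cref{eq:yukawa-relative-derivative-estimate} as a pointwise inequality between Fourier multipliers and then transfer it to $L^2$ by Plancherel. Let $\xi\in\RR^3$ be the frequency variable dual to $y$ and $\eta\in\RR^3$ the one dual to $z$. With the convention \cref{eq:fourier_convention}, $|p_y|$ has symbol $|\xi|$, $|p_z|$ has symbol $|\eta|$, and $\partial_{y_j-z_j}=\frac12(\partial_{y_j}-\partial_{z_j})$ has symbol $\frac{\I}{2}(\xi_j-\eta_j)$. All three operators are Fourier multipliers, so they commute. Plancherel on $\RR^6$ then reduces \cref{eq:yukawa-relative-derivative-estimate} to
\begin{equation*}
\frac14|\xi|^2(\xi_j-\eta_j)^2\le \frac34|\xi|^4+\frac14|\eta|^4,\qquad \xi,\eta\in\RR^3,
\end{equation*}
integrated against $|\widehat g(\xi,\eta)|^2$. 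The right-hand side is finite because $g\in H^2(\RR^6)$.

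The pointwise inequality follows in two elementary steps. First, $(\xi_j-\eta_j)^2\le 2\xi_j^2+2\eta_j^2\le 2|\xi|^2+2|\eta|^2$, which gives
\begin{equation*}
\frac14|\xi|^2(\xi_j-\eta_j)^2\le \frac12|\xi|^4+\frac12|\xi|^2|\eta|^2 .
\end{equation*}
Second, the AM--GM inequality gives $|\xi|^2|\eta|^2\le \frac12|\xi|^4+\frac12|\eta|^4$. Substituting this into the previous bound yields exactly $\frac34|\xi|^4+\frac14|\eta|^4$, which is the required inequality.

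For the consequence \cref{eq:yukawa-relative-derivative-estimate-x}, I would apply the same argument with $y=x_1$ and $z=x_2$, treating $x_3,\dots,x_N$ as passive variables. One option is to run Plancherel on all of $\RR^{3N}$, where the multiplier inequality is unchanged. Alternatively, one can apply \cref{eq:yukawa-relative-derivative-estimate} for almost every fixed $(x_3,\dots,x_N)$ and integrate using Fubini. Either way,
\begin{equation*}
\left\||p_1|\partial_{(x_1-x_2)\cdot e_j}f\right\|^2\le \frac34\||p_1|^2f\|^2+\frac14\||p_2|^2f\|^2 .
\end{equation*}
Finally, $\sqrt{\frac34a^2+\frac14b^2}\le a+b$ for $a,b\ge0$, so taking square roots gives \cref{eq:yukawa-relative-derivative-estimate-x}. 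No step is a genuine obstacle. The only care needed is to match the normalization $\frac12$ in $\partial_{y_j-z_j}$ with the constants $\frac34$ and $\frac14$, and to note that $\widehat g$ is square integrable against $|\xi|^4$ and $|\eta|^4$, so every integral above is finite.
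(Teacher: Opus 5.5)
Your proof is correct and is essentially the paper's argument. The paper proves the form inequality $-\partial_{y_j-z_j}^2\le\frac12(-\Delta_y-\Delta_z)$ by integration by parts for smooth $g$, and then bounds $\||p_z||p_y|g\|^2\le\frac12\||p_y|^2g\|^2+\frac12\||p_z|^2g\|^2$; these are exactly your two pointwise multiplier steps, written as quadratic forms rather than via Plancherel. Your Fourier formulation also makes the paper's density step unnecessary. Your passage to the $x$-variables, taking $y=x_1$, $z=x_2$ with the other variables passive and using $\sqrt{\tfrac34a^2+\tfrac14b^2}\le a+b$, is also fine.
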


\begin{proof}
For smooth $g$, integration by parts gives
\begin{equation}
\begin{aligned}
\left\|
|p_y|\partial_{y_j-z_j}g
\right\|^2
&=-\left(
|p_y|g,
\partial_{y_j-z_j}^2 |p_y|g
\right)_{L^2}                          
\le
\frac12
\left(
|p_y|g,
(-\Delta_y-\Delta_z)|p_y|g
\right)_{L^2}                                      \\
&=
\frac12
\left(
\left\||p_y|^2g\right\|^2
+\left\||p_z||p_y|g\right\|^2
\right)                                
\le
\frac34\left\||p_y|^2g\right\|^2
+\frac14\left\||p_z|^2g\right\|^2 .
\end{aligned}
\end{equation}
The result for $g\in H^2$ follows by density. Applying this estimate to the
relative and center-of-mass variables associated with $x_1$ and $x_2$ gives
\eqref{eq:yukawa-relative-derivative-estimate-x}.
\end{proof}

We next introduce the smooth cutoff decomposition for the Yukawa interaction. Let
$0<\beta<1$, set $a=s^\beta$, and choose
$\chi_{\mathrm{sin}}\in C^\infty([0,\infty))$ such that
\begin{equation}
0\le \chi_{\mathrm{sin}}\le1,\quad
\chi_{\mathrm{sin}}(\rho)=1\quad \text{for }0\le \rho\le \frac12,
\quad
\chi_{\mathrm{sin}}(\rho)=0\quad \text{for }\rho\ge1.
\end{equation}
Let $\chi_{\mathrm{reg}}(\rho):=1-\chi_{\mathrm{sin}}(\rho)$.
For
$V_\mu(y):=e^{-\mu |y|}/|y|,
\ y\in\mathbb R^3\setminus\{0\}$,
define
\begin{equation}
v_{\mu,\mathrm{sin}}(y,s)
:=
\chi_{\mathrm{sin}}\left(\frac{|y|}{a}\right)V_\mu(y),
\quad
v_{\mu,\mathrm{reg}}(y,s)
:=\chi_{\mathrm{reg}}\left(\frac{|y|}{a}\right)V_\mu(y).
\label{eq:yukawa-pair-cutoff-decomposition}
\end{equation}

\begin{proof}[Proof of \cref{lem:yukawa-cutoff-estimates}]
Write $r=|y|$. Since
\begin{equation}
V_\mu(r)=\frac{e^{-\mu r}}{r},
\quad
V_\mu'(r)
=-\left(\mu+\frac1r\right)V_\mu(r),
\end{equation}
and, away from $r=0$,
$\Delta V_\mu=\mu^2V_\mu$.
Although distributionally one has
$\Delta V_\mu=\mu^2V_\mu+4\pi\delta_0$, the delta contribution does not appear
in the following calculation because $v_{\mu,\mathrm{reg}}$ vanishes in a
neighborhood of $r=0$. We compute
\begin{equation}
\Delta_y v_{\mu,\mathrm{reg}}(y,s)
=
\chi_{\mathrm{reg}}\left(\frac r a\right)\mu^2V_\mu(r)
+2a^{-1}\chi_{\mathrm{reg}}'\left(\frac r a\right)V_\mu'(r) +
\left[
a^{-2}\chi_{\mathrm{reg}}''\left(\frac r a\right)
+
2a^{-1}r^{-1}\chi_{\mathrm{reg}}'\left(\frac r a\right)
\right]V_\mu(r).
\label{eq:yukawa-Delta-vreg-expansion}
\end{equation}
The terms involving derivatives of $\chi_{\mathrm{reg}}$ are supported on
$\{a/2\le r\le a\}$. On this annulus $r\sim a$, and hence
\begin{equation}
\left|
2a^{-1}\chi_{\mathrm{reg}}'\left(\frac r a\right)V_\mu'(r)
+
\left[
a^{-2}\chi_{\mathrm{reg}}''\left(\frac r a\right)
+
2a^{-1}r^{-1}\chi_{\mathrm{reg}}'\left(\frac r a\right)
\right]V_\mu(r)
\right|
\le
C_{\mu}\left(a^{-3}+\mu a^{-2}\right)
\mathbf 1_{\{a/2\le r\le a\}} .
\end{equation}
Therefore,
\begin{equation}
\left\|
\left(a^{-3}+\mu a^{-2}\right)
\mathbf 1_{\{a/2\le |y|\le a\}}
\right\|_{L^2_y}
\le C_{\mu}
\left(
a^{-3/2}+\mu a^{-1/2}
\right)
\le C_{\mu}a^{-3/2},
\end{equation}
because $a\le1$ and $\mu\le\mu_0$. For the remaining Yukawa bulk term,
\begin{equation}
\|\mu^2 V_\mu\|_{L^2(\mathbb R^3)}^2
=4\pi\mu^4\int_0^\infty e^{-2\mu r}\,dr
=2\pi\mu^3,
\end{equation}
and therefore
$\|\mu^2 V_\mu\|_{L^2}
\le C\mu^{3/2}
\le C_{\mu}a^{-3/2}$.
This proves \cref{eq:yukawa-cutoff-laplacian-bound}.
Next,
\begin{equation}
\nabla_y v_{\mu,\mathrm{reg}}(y,s)
=\chi_{\mathrm{reg}}\left(\frac r a\right)\nabla_y V_\mu(y)
+a^{-1}\chi_{\mathrm{reg}}'\left(\frac r a\right)\frac{y}{r}V_\mu(r).
\end{equation}
Since
\begin{equation}
|\nabla_y V_\mu(y)|
=\left(\mu+\frac1r\right)V_\mu(r),
\end{equation}
we have on the support of $\chi_{\mathrm{reg}}$,
\begin{equation}
r|\nabla_y V_\mu(y)|
=\mu e^{-\mu r}
+\frac{e^{-\mu r}}{r}
\le \mu+\frac{2}{a}
\le C_{\mu}a^{-1}.
\end{equation}
For the cutoff derivative term, the support of $\chi_{\mathrm{reg}}'$ lies in
$\{a/2\le r\le a\}$, and hence
\begin{equation}
r\,a^{-1}
\left|\chi_{\mathrm{reg}}'\left(\frac r a\right)\right|
V_\mu(r)
\le
Ca^{-1}.
\end{equation}
This proves \cref{eq:yukawa-cutoff-gradient-bound}.
Finally, since $0\le V_\mu(y)\le |y|^{-1}$ and $v_{\mu,\mathrm{sin}}$ is supported in $\{|y|\le a\}$,
\begin{equation}
\|V_{\mu,\mathrm{sin}}(\cdot,s)\|_{L^2}^2
\le 4\pi\int_0^a dr
\le Ca.
\end{equation}
Thus $\|V_{\mu,\mathrm{sin}}(\cdot,s)\|_{L^2}\le Ca^{1/2}=Cs^{\beta/2}$, proving \cref{eq:yukawa-cutoff-singular-bound}.
\end{proof}

\section{Proof of the One-body Yukawa upper bound}\label{app:one_upper}

\begin{proof}[Proof of \cref{lem:one-body-yukawa-commutator-upper}]
It suffices to prove the estimate for $f,g\in \mathcal S(\mathbb R^3)$.
The general case then follows by density. Throughout the proof, $C_{\mu}$
denotes a constant depending only on $\mu$ and on the fixed cutoff function.
Let $0<\beta<1$ be chosen later and set $a=s^\beta$. Choose $\chi_{\mathrm{sin}}\in C^\infty([0,\infty))$ such that
\begin{equation}
0\le \chi_{\mathrm{sin}}\le 1,\quad
\chi_{\mathrm{sin}}(\rho)=1\quad \text{for }0\le \rho\le \frac12,
\quad
\chi_{\mathrm{sin}}(\rho)=0\quad \text{for }\rho\ge 1.
\end{equation}
Let $\chi_{\mathrm{reg}}(\rho):=1-\chi_{\mathrm{sin}}(\rho)$, we decompose
$V_\mu=V_{\mu,\mathrm{sin}}+V_{\mu,\mathrm{reg}}$,
where
\begin{equation}
V_{\mu,\mathrm{sin}}(x,s)
=\chi_{\mathrm{sin}}\left(\frac{|x|}{a}\right)V_\mu(x),
\quad
V_{\mu,\mathrm{reg}}(x,s)
=\chi_{\mathrm{reg}}\left(\frac{|x|}{a}\right)V_\mu(x).
\end{equation}
We first estimate the singular part. Since $|V_\mu(x)|\le |x|^{-1}$ and $V_{\mu,\mathrm{sin}}$ is supported in $\{|x|\le a\}$, we have
\begin{equation}
\|V_{\mu,\mathrm{sin}}\|_{L^2}^2
\le
4\pi\int_0^a \frac{1}{r^2}r^2\,dr
\le Ca.
\end{equation}
Hence
$\|V_{\mu,\mathrm{sin}}\|_{L^2}
\le Ca^{1/2}
=Cs^{\beta/2}$.
Therefore, using
\begin{equation}
[e^{is\Delta},V_{\mu,\mathrm{sin}}]f
=e^{is\Delta}(V_{\mu,\mathrm{sin}}f)
-V_{\mu,\mathrm{sin}}e^{is\Delta}f,
\end{equation}
the unitarity of $e^{is\Delta}$ on $L^2$, and the Sobolev embedding
$H^2(\RR^3)\hookrightarrow L^\infty(\RR^3)$, we obtain
\begin{equation}
\begin{aligned}
\left\|
[e^{is\Delta},V_{\mu,\mathrm{sin}}]f
\right\|_{L^2}
&\le
\|V_{\mu,\mathrm{sin}}f\|_{L^2}
+\|V_{\mu,\mathrm{sin}}e^{is\Delta}f\|_{L^2}        \\
&\le
\|V_{\mu,\mathrm{sin}}\|_{L^2}
\left(
\|f\|_{L^\infty}
+\|e^{is\Delta}f\|_{L^\infty}
\right)                                
\le
Cs^{\beta/2}\|f\|_{H^2}.
\end{aligned}
\label{eq:one-body-singular-commutator-bound}
\end{equation}
We now estimate the regular part. Duhamel's commutator formula gives
\begin{equation}
[e^{is\Delta},V_{\mu,\mathrm{reg}}]
=i\int_0^s
e^{i\tau\Delta}
[\Delta,V_{\mu,\mathrm{reg}}]
e^{i(s-\tau)\Delta}
\,d\tau .
\label{eq:one-body-regular-duhamel}
\end{equation}
Let
$h_\tau=e^{-i\tau\Delta}g$, and $u_\tau=e^{i(s-\tau)\Delta}f$.
Then
$[\Delta,V_{\mu,\mathrm{reg}}]u_\tau
=(\Delta V_{\mu,\mathrm{reg}})u_\tau
+2\nabla V_{\mu,\mathrm{reg}}\cdot \nabla u_\tau$.
We first control the zeroth-order term. Since
\begin{equation}
V_\mu(r)=\frac{e^{-\mu r}}{r},
\quad
\partial_r V_\mu(r)
=-\left(\mu+\frac1r\right)V_\mu(r),
\end{equation}
and, away from $r=0$,
$\Delta V_\mu=\mu^2 V_\mu$,
so we may compute $\Delta V_{\mu,\mathrm{reg}}$ without any distributional
$\delta_0$ contribution, because $\chi_{\mathrm{reg}}(|x|/a)$ vanishes in a
neighborhood of the origin. Writing $r=|x|$, we have
\begin{equation}
\Delta V_{\mu,\mathrm{reg}}
=\chi_{\mathrm{reg}}\left(\frac r a\right)\mu^2 V_\mu
+2a^{-1}\chi_{\mathrm{reg}}'\left(\frac r a\right)\partial_r V_\mu
+\left[
a^{-2}\chi_{\mathrm{reg}}''\left(\frac r a\right)
+
2a^{-1}r^{-1}\chi_{\mathrm{reg}}'\left(\frac r a\right)
\right]V_\mu .
\label{eq:one-body-delta-vreg-expansion}
\end{equation}
The terms involving derivatives of $\chi_{\mathrm{reg}}$ are supported on the
annulus $\{a/2\le r\le a\}$. On this annulus $r\sim a$, and hence
\begin{equation}
\left|
2a^{-1}\chi_{\mathrm{reg}}'\left(\frac r a\right)\partial_r V_\mu
+
\left[
a^{-2}\chi_{\mathrm{reg}}''\left(\frac r a\right)
+
2a^{-1}r^{-1}\chi_{\mathrm{reg}}'\left(\frac r a\right)
\right]V_\mu
\right|
\le
C\left(a^{-3}+\mu a^{-2}\right)
\mathbf 1_{\{a/2\le r\le a\}} .
\end{equation}
Therefore,
\begin{equation}
\left\|
\left(a^{-3}+\mu a^{-2}\right)
\mathbf 1_{\{a/2\le |x|\le a\}}
\right\|_{L^2}
\le
C\left(a^{-3/2}+\mu a^{-1/2}\right).
\end{equation}
For the remaining term, we use
\begin{equation}
\|\mu^2 V_\mu\|_{L^2}^2
=
4\pi \mu^4\int_0^\infty e^{-2\mu r}\,dr
=
2\pi \mu^3,
\end{equation}
and hence
$\|\mu^2 V_\mu\|_{L^2}
\le C\mu^{3/2}$.
Combining these bounds and recalling $a=s^\beta$, we obtain
\begin{equation}
\|\Delta V_{\mu,\mathrm{reg}}\|_{L^2}
\le
C_{\mu}
\left(
s^{-3\beta/2}
+\mu s^{-\beta/2}
+\mu^{3/2}
\right).
\label{eq:one-body-delta-vreg-L2-bound}
\end{equation}
Thus, using the unitarity of $e^{-i\tau\Delta}$ on $L^2$ and the Sobolev
embedding $H^2(\mathbb R^3)\hookrightarrow L^\infty(\mathbb R^3)$, we get
\begin{equation}
\begin{aligned}
\int_0^s
\left|
\left\langle
h_\tau,
(\Delta V_{\mu,\mathrm{reg}})u_\tau
\right\rangle
\right|
\,d\tau
&\le
\int_0^s
\|h_\tau\|_{L^2}
\|\Delta V_{\mu,\mathrm{reg}}\|_{L^2}
\|u_\tau\|_{L^\infty}
\,d\tau  \\
&\le
C_{\mu}
\left(
s^{1-3\beta/2}
+
\mu s^{1-\beta/2}
+
\mu^{3/2}s
\right)
\|g\|_{L^2}\|f\|_{H^2}.
\end{aligned}
\label{eq:one-body-zero-order-regular-bound}
\end{equation}
It remains to estimate the first-order term. We first record a pointwise bound for
$\nabla V_{\mu,\mathrm{reg}}$. Since
\begin{equation}
\nabla V_{\mu,\mathrm{reg}}
=\chi_{\mathrm{reg}}\left(\frac r a\right)\nabla V_\mu
+a^{-1}\chi_{\mathrm{reg}}'\left(\frac r a\right)\frac{x}{r}V_\mu,
\end{equation}
and since the derivative of $\chi_{\mathrm{reg}}$ is supported where $r\sim a$,
we have
$a^{-1}
\left|\chi_{\mathrm{reg}}'\left(r /a\right)\right|
\le C/r$.
Moreover,
$|\nabla V_\mu|
=\left(\mu+1/r\right)V_\mu
\le
C_{\mu}\left(1+1/r\right)V_\mu$.
Hence
\begin{equation}
|\nabla V_{\mu,\mathrm{reg}}(x,s)|
\le C_{\mu}V_\mu(x)
\left(
\frac{1}{|x|}+1
\right).
\label{eq:one-body-gradient-vreg-pointwise}
\end{equation}
We now use Kato smoothing. Since $0\le V_\mu(x)\le |x|^{-1}$, the standard free Kato smoothing estimate for the Coulomb weight implies
\begin{equation}
\left(
\int_0^s
\|V_\mu e^{-i\tau\Delta}g\|_{L^2}^2
\,d\tau
\right)^{1/2}
\le\left(
\int_0^s
\left\|\frac1{|x|}e^{-i\tau\Delta}g\right\|_{L^2}^2
\,d\tau
\right)^{1/2}
\le C\|g\|_{L^2}.
\label{eq:one-body-kato-smoothing}
\end{equation}
Using \eqref{eq:one-body-gradient-vreg-pointwise}, we obtain
\begin{equation}
\begin{aligned}
\int_0^s
\left|
\left\langle
h_\tau,
2\nabla V_{\mu,\mathrm{reg}}\cdot\nabla u_\tau
\right\rangle
\right|
\,d\tau
&\le C_{\mu}
\int_0^s
\|V_\mu h_\tau\|_{L^2}
\left(
\left\|\frac{\nabla u_\tau}{|x|}\right\|_{L^2}
+\|\nabla u_\tau\|_{L^2}
\right)
\,d\tau .
\end{aligned}
\end{equation}
By Hardy's inequality applied to each component $\partial_j u_\tau$, we have
\begin{equation}
\left\|\frac{\nabla u_\tau}{|x|}\right\|_{L^2}
\le
C\|u_\tau\|_{H^2}.
\end{equation}
Also,
$\|\nabla u_\tau\|_{L^2}
\le \|u_\tau\|_{H^2}$.
Since the free Schr\"odinger group preserves the $H^2$ norm,
$\|u_\tau\|_{H^2}
=\|e^{i(s-\tau)\Delta}f\|_{H^2}
=\|f\|_{H^2}$.
Therefore, by Cauchy's inequality in the time variable,
\begin{equation}
\int_0^s
\left|
\left\langle
h_\tau,
2\nabla V_{\mu,\mathrm{reg}}\cdot\nabla u_\tau
\right\rangle
\right|
\,d\tau
\le
C_{\mu}\|f\|_{H^2}
\int_0^s
\|V_\mu e^{-i\tau\Delta}g\|_{L^2}
\,d\tau  
\le
C_{\mu}s^{1/2}
\|g\|_{L^2}\|f\|_{H^2}.
\label{eq:one-body-first-order-regular-bound}
\end{equation}
Combining the singular estimate \cref{eq:one-body-singular-commutator-bound},
the zeroth-order regular estimate \cref{eq:one-body-zero-order-regular-bound},
and the first-order Kato estimate \cref{eq:one-body-first-order-regular-bound},
we obtain
\begin{equation}
\left|
\left\langle
g,
[e^{is\Delta},V_\mu]f
\right\rangle
\right|
\le C_{\mu}
\left(
s^{\beta/2}
+s^{1-3\beta/2}
+\mu s^{1-\beta/2}
+\mu^{3/2}s
+s^{1/2}
\right)
\|g\|_{L^2}\|f\|_{H^2}.
\end{equation}
Choosing $\beta=1/2$ gives
\begin{equation}
s^{\beta/2}=s^{1/4},
\quad
s^{1-3\beta/2}=s^{1/4}.
\end{equation}
Hence
\begin{equation}
\left|
\left\langle
g,
[e^{is\Delta},V_\mu]f
\right\rangle
\right|
\le
C_{\mu}
\left(
s^{1/4}
+\mu s^{3/4}
+\mu^{3/2}s
+s^{1/2}
\right)
\|g\|_{L^2}\|f\|_{H^2}.
\end{equation}
Since $0<s\le 1$ and $0<\mu\le \mu_0$, the last three terms are bounded by
$C_{\mu}s^{1/4}$. Therefore,
\begin{equation}
\left|
\left\langle
g,
[e^{is\Delta},V_\mu]f
\right\rangle
\right|
\le
C_{\mu}s^{1/4}
\|g\|_{L^2}\|f\|_{H^2}.
\end{equation}
Taking the supremum over $\|g\|_{L^2}\le1$ and $\|f\|_{H^2}\le1$ proves the
claim.
\end{proof}

\begin{thm}[One-body Yukawa upper bound]
\label{cor:one-body-yukawa-upper-bound}
Fix $\mu>0$. Let $H_\mu=-\Delta+V_\mu$ be the one-body Yukawa Hamiltonian defined by the $N=1$ case of \cref{eq:yukawa_nbody_potential}. 
Let $U_{1,\mu}(t)$ be the
first-order splitting operator defined in \cref{eq:yukawa_nbody_trotter}.
Then, for every $\psi_0\in H^2(\mathbb R^3)$, $T>0$, $L\in\mathbb N$, and $t=T/L\in(0,1]$,
\begin{equation}
\left\|
\left(e^{-itV_\mu}e^{it\Delta}\right)^L\psi_0
-e^{-iTH_\mu}\psi_0
\right\|_{L^2(\mathbb R^3)}
\le
C_{\mu}Tt^{1/4}\|\psi_0\|_{H^2(\mathbb R^3)}.
\label{eq:one-body-yukawa-long-time-upper-bound}
\end{equation}
\end{thm}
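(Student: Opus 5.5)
The plan is to repeat, in the one-body setting, the three steps already used for \cref{thm:main-many-body-yukawa-upper}: a telescoping reduction, the local error representation, and a commutator bound. The only change is that the many-body Sobolev flow estimate is replaced by the one-body $H^2$ flow bound of \cref{lem:one-body-yukawa-H2-flow}.

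\textbf{Telescoping reduction.} Write $U_\mu(t)=e^{-itH_\mu}$ and $\psi(t)=U_\mu(t)\psi_0$. Self-adjointness of $H_\mu$ on $H^2(\RR^3)$ comes from \cref{lem:one-body-yukawa-H2-flow}, so both $U_\mu(t)$ and $U_{1,\mu}(t)$ are unitary on $L^2$. The telescoping identity \cref{eq:yukawa-global-telescoping} then gives
\[
\bigl\|U_{1,\mu}(t)^L\psi_0-U_\mu(T)\psi_0\bigr\|\le\sum_{j=0}^{L-1}\bigl\|(U_{1,\mu}(t)-U_\mu(t))\psi(t_j)\bigr\|.
\]

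\textbf{Local error bound.} Apply the exact representation \cref{eq:local_error_representation_intro} with $A=-\Delta$ and $B=V_\mu$, as in \cref{eq:yukawa-local-error-at-tj}. Each local term equals $i\int_0^t e^{-isV_\mu}[e^{is\Delta},V_\mu]\psi(t_j+t-s)\,ds$. The left factor $e^{-isV_\mu}$ is multiplication by a unimodular function, so it is unitary and drops out of the norm. \cref{lem:one-body-yukawa-commutator-upper} bounds the commutator by $C_\mu s^{1/4}\|\psi(t_j+t-s)\|_{H^2}$. By \cref{eq:one-body-yukawa-H2-flow}, this is at most $C_\mu s^{1/4}\|\psi_0\|_{H^2}$, uniformly in the intermediate time. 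Integrating in $s$ gives the local estimate
\[
\bigl\|(U_{1,\mu}(t)-U_\mu(t))\psi(t_j)\bigr\|\le \tfrac45 C_\mu t^{5/4}\|\psi_0\|_{H^2}.
\]

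\textbf{Summation.} There are $L=T/t$ local terms, so summing gives $C_\mu L t^{5/4}\|\psi_0\|_{H^2}=C_\mu T t^{1/4}\|\psi_0\|_{H^2}$, which is \cref{eq:one-body-yukawa-long-time-upper-bound}.

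The substantive work has already been done in \cref{lem:one-body-yukawa-commutator-upper}, namely the cutoff balance at $\beta=1/2$ together with Kato smoothing. The one point that needs care here is justifying the representation \cref{eq:local_error_representation_intro} on $H^2$ data. The issue is that $[e^{is\Delta},V_\mu]$ must act on $\psi(t_j+t-s)\in H^2$, and the integrand must be strongly measurable, i.e.\ continuous in $s$ as an $L^2$-valued function. To check this, differentiate $s\mapsto e^{-isV_\mu}e^{is\Delta}e^{-i(t-s)H_\mu}\phi$ for $\phi\in H^2$. Strong continuity of $e^{-iuH_\mu}$ on $H^2$ (from \cref{lem:one-body-yukawa-H2-flow}) and of $e^{is\Delta}$ on $H^2$ make the derivative meaningful. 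Boundedness of $V_\mu:H^2\to L^2$ (Hardy's inequality) makes every term in the derivative a well-defined $L^2$ vector. The fundamental theorem of calculus in $L^2$ then yields the formula.
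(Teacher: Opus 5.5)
Your proposal is correct and follows the paper's proof essentially step for step. The paper likewise combines the local error representation, the commutator bound of \cref{lem:one-body-yukawa-commutator-upper} and the uniform $H^2$ flow bound into a local $\frac45 C_\mu t^{5/4}\|\psi_0\|_{H^2}$ estimate, then telescopes and uses $L=T/t$. Your extra check of the error representation on $H^2$ data, by differentiating $s\mapsto e^{-isV_\mu}e^{is\Delta}e^{-i(t-s)H_\mu}\phi$, is sound; the paper does not prove this representation and instead cites it from prior work.
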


\begin{proof}
The local error representation gives
\begin{equation}
e^{-itV_\mu}e^{it\Delta}\psi_0-e^{-itH_\mu}\psi_0
=i\int_0^t
e^{-isV_\mu}
[e^{is\Delta},V_\mu]
e^{-i(t-s)H_\mu}\psi_0\,ds .
\label{eq:one-body-local-error-representation-upper}
\end{equation}
By \cref{lem:one-body-yukawa-H2-flow}, there exists $C_{\mu}>0$ such that,
uniformly for $\mu>0$,
\begin{equation}
\|e^{-i\tau H_\mu}\psi_0\|_{H^2}
\le
C_{\mu}\|\psi_0\|_{H^2},
\quad
\tau\in\mathbb R.
\label{eq:one-body-H2-flow-bound}
\end{equation}
Using \cref{lem:one-body-yukawa-commutator-upper} in
\cref{eq:one-body-local-error-representation-upper}, together with
\cref{eq:one-body-H2-flow-bound}, yields
\begin{equation}
\left\|
e^{-itV_\mu}e^{it\Delta}\psi_0-e^{-itH_\mu}\psi_0
\right\|_{L^2}
\le
C_{\mu}\|\psi_0\|_{H^2}
\int_0^t s^{1/4}\,ds  
=
\frac45 C_{\mu}t^{5/4}\|\psi_0\|_{H^2} 
\le
C_{\mu}t^{5/4}\|\psi_0\|_{H^2}.
\end{equation}
The long-time estimate follows from the standard telescoping identity. Let
$U_{1,\mu}(t)=e^{-itV_\mu}e^{it\Delta}$ and $U_\mu(t)=e^{-itH_\mu}$. Then
\begin{equation}
U_{1,\mu}(t)^L-U_\mu(t)^L
=\sum_{\ell=0}^{L-1}
U_{1,\mu}(t)^{L-1-\ell}
\left(U_{1,\mu}(t)-U_\mu(t)\right)
U_\mu(t)^\ell .
\end{equation}
Since $U_{1,\mu}(t)$ and $U_\mu(t)$ are unitary on $L^2$, using the local
bound with shifted initial data $U_\mu(t)^\ell\psi_0$, together with
\cref{eq:one-body-H2-flow-bound}, gives
\begin{equation}
\left\|
U_{1,\mu}(t)^L\psi_0-U_\mu(t)^L\psi_0
\right\|_{L^2}
\le
LC_{\mu}t^{5/4}\|\psi_0\|_{H^2}.
\end{equation}
Since $L=T/t$, we obtain
$\left\|
\left(e^{-itV_\mu}e^{it\Delta}\right)^L\psi_0
-e^{-iTH_\mu}\psi_0
\right\|_{L^2}
\le C_{\mu}Tt^{1/4}\|\psi_0\|_{H^2}$.
This proves \cref{eq:one-body-yukawa-long-time-upper-bound}.
\end{proof}

\section{Many-body Sobolev flow estimate}
\label{sec:yukawa-many-body-flow-estimate}
It remains to control the Sobolev norms of the exact Yukawa evolution appearing on the right-hand side of \cref{eq:yukawa-local-error-sup-flow}. For every $\psi_0\in D(H_\mu)$, both $\|\psi(t)\|$ and $\|H_\mu\psi(t)\|$ are conserved along the exact flow. However, converting this Hamiltonian control into Sobolev control introduces dependence on the particle number $N$. We therefore use the many-body $H^2$ flow strategy from the Coulomb case \cite[Theorem~7]{FangWuSoffer2025}, while keeping the $N$-dependence explicit. The key ingredient is the many-body HLS bound
\begin{equation}
\left\|V_\mu |p|^{-1}\right\|_{L^2\to L^2}
\le 2C_{\mathrm{int}}N^{3/2},
\label{eq:yukawa-HLS-bound-short}
\end{equation}
where $|p|=(-\Delta)^{1/2}$. This follows from the Coulomb many-body HLS estimate, since each Yukawa pair kernel is dominated by the corresponding Coulomb kernel and $|c_{jk}|\le C_{\mathrm{int}}$.

\begin{thm}[Many-body Yukawa $H^2$ flow estimate]
\label{prop:yukawa-H2-flow}
Let $\psi(t)=e^{-itH_\mu}\psi_0$, with $\psi_0\in H^2(\mathbb R^{3N})$. Then, for every $t\in\mathbb R$,
\begin{equation}
\|\psi(t)\|_{H^2}
\le
C\left(1+C_{\mathrm{int}}N^{3/2}
+C_{\mathrm{int}}^2N^3\right)
\|\psi_0\|_{H^2}.
\label{eq:yukawa-H2-flow}
\end{equation}
where $C>0$ is a universal constant. In particular, for fixed $C_{\mathrm{int}}$, the right-hand side is
$O_{C_{\mathrm{int}}}(N^3)\|\psi_0\|_{H^2}$.
\end{thm}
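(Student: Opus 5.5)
The plan is to use conservation of $\|\psi(t)\|$ and $\|H_\mu\psi(t)\|$ under the exact flow, and to convert between $\|H_\mu\psi\|$ and $\|\Delta\psi\|$ through the many-body HLS bound \cref{eq:yukawa-HLS-bound-short}. The key point is to make the constants in both directions explicit in $N$ and $C_{\mathrm{int}}$. Self-adjointness on $H^2(\RR^{3N})$ comes from Kato--Rellich as in the one-body \cref{lem:one-body-yukawa-H2-flow}, since each pair term is infinitesimally bounded relative to $-\Delta$.

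\emph{Upper direction.} For $\psi_0\in H^2$ we write $\|H_\mu\psi_0\|\le\|\Delta\psi_0\|+\|V_\mu|p|^{-1}\|\,\||p|\psi_0\|$. Using \cref{eq:yukawa-HLS-bound-short} and $\||p|\psi_0\|\le\|\psi_0\|_{H^2}$, this gives
\begin{equation*}
\|H_\mu\psi_0\|\le (1+2C_{\mathrm{int}}N^{3/2})\|\psi_0\|_{H^2}.
\end{equation*}

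\emph{Lower direction at time $t$.} Set $\phi=\psi(t)$. Then
\begin{equation*}
\|\Delta\phi\|\le\|H_\mu\phi\|+\|V_\mu\phi\|\le \|H_\mu\phi\|+2C_{\mathrm{int}}N^{3/2}\||p|\phi\|.
\end{equation*}
For the last term we interpolate, $\||p|\phi\|^2=\langle\phi,-\Delta\phi\rangle\le\|\phi\|\,\|\Delta\phi\|$, and apply Young's inequality:
\begin{equation*}
2C_{\mathrm{int}}N^{3/2}\||p|\phi\|\le \tfrac12\|\Delta\phi\|+2C_{\mathrm{int}}^2N^3\|\phi\|.
\end{equation*}
Absorbing the $\tfrac12\|\Delta\phi\|$ term into the left-hand side yields $\|\Delta\phi\|\le 2\|H_\mu\phi\|+4C_{\mathrm{int}}^2N^3\|\phi\|$. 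This is the step that produces the $C_{\mathrm{int}}^2N^3$ term, and the main care is to absorb with an $N$-independent fraction so that no worse power of $N$ appears.

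\emph{Combining.} By conservation, $\|H_\mu\phi\|=\|H_\mu\psi_0\|$ and $\|\phi\|=\|\psi_0\|$. Inserting the upper bound gives
\begin{equation*}
\|\psi(t)\|_{H^2}\le C\bigl(1+C_{\mathrm{int}}N^{3/2}+C_{\mathrm{int}}^2N^3\bigr)\|\psi_0\|_{H^2},
\end{equation*}
with a universal constant $C$, which is \cref{eq:yukawa-H2-flow}.

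The genuine input is \cref{eq:yukawa-HLS-bound-short}, which we take as given. If it needed justification, it would follow from the triangle inequality over the $N(N-1)/2$ pairs. Each pair is controlled by applying \cref{lem:hls_operator} in the relative coordinate: $\||x_j-x_k|^{-1}\phi\|\le 2\||p_j|\phi\|$ after a unitary change of variables (noting $|p_{\mathrm{rel}}|\le|p_j|+|p_k|$ suitably normalized). The sum over $k$ of $\||p_j|\phi\|$ is then bounded by $\sqrt N\,\||p|\phi\|$ via Cauchy--Schwarz, giving the total $N\cdot N^{1/2}=N^{3/2}$. Finally, the Yukawa kernel is dominated pointwise by the Coulomb kernel, $e^{-\mu r}/r\le 1/r$.
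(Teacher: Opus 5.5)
Your proposal is correct and follows essentially the same route as the paper: Kato--Rellich for the domain, conservation of $\|\psi(t)\|$ and $\|H_\mu\psi(t)\|$, the identity $-\Delta\psi(t)=e^{-itH_\mu}H_\mu\psi_0-V_\mu\psi(t)$ combined with the many-body HLS bound, and an $N$-explicit absorption of the $\||p|\psi(t)\|$ term. Your interpolation-plus-Young step, which in effect splits frequencies at scale $\sim C_{\mathrm{int}}N^{3/2}$, does the job of the low/high frequency decomposition the paper imports from the Coulomb argument, and it makes the proof self-contained with an explicit universal constant.
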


\begin{proof}
We first justify that the quantities appearing in the estimate are well-defined.
By \cref{eq:yukawa-HLS-bound-short}, for every $\phi\in H^2(\mathbb R^{3N})$,
\begin{equation}
\|V_\mu\phi\|
\le \|V_\mu |p|^{-1}\|\,\||p|\phi\|
\le C_{\mathrm{int}}N^{3/2}\||p|\phi\|.
\end{equation}
By the standard interpolation estimate, for every $\eta>0$,
$\||p|\phi\|
\le
\eta\|-\Delta\phi\|
+C_\eta\|\phi\|$.
Combining this with the preceding bound and choosing $\eta$ sufficiently small gives, for every $\varepsilon>0$,
\begin{equation}
\|V_\mu\phi\|
\le
\varepsilon\|-\Delta\phi\|
+C_{\varepsilon,N,C_{\mathrm{int}}}\|\phi\|.
\end{equation}
Hence $V_\mu$ is infinitesimally $-\Delta$-bounded. Since $V_\mu$ is real-valued, the Kato-Rellich theorem implies that $H_\mu=-\Delta+V_\mu$ is self-adjoint with domain $D(H_\mu)=H^2(\RR^{3N})$. The same infinitesimal bound gives equivalence of the graph norm of $H_\mu$ and the $H^2$ norm. Therefore, by functional calculus, $e^{-itH_\mu}D(H_\mu)=D(H_\mu)$, and $H_\mu e^{-itH_\mu}\psi_0=e^{-itH_\mu}H_\mu\psi_0$.
In particular, $e^{-itH_\mu}$ preserves $H^2(\mathbb R^{3N})$, and, using $H_\mu=-\Delta+V_\mu$, we obtain
\begin{equation}
(-\Delta)e^{-itH_\mu}\psi_0
=e^{-itH_\mu}H_\mu\psi_0
-V_\mu e^{-itH_\mu}\psi_0
\label{eq:yukawa-delta-identity-in-flow-proof}
\end{equation}
in $L^2$.
With this domain justification in place, we use the many-body Coulomb $H^2$ flow argument \cite[Theorem~7]{FangWuSoffer2025}. One of the many-body ingredients in that argument is the HLS-type bound for $V|p|^{-1}$, proved in the Coulomb case in \cite[Lemma~8]{FangWuSoffer2025}; in the Yukawa case, the corresponding bound is precisely \cref{eq:yukawa-HLS-bound-short}. Since $V_\mu$ is real-valued, the adjoint of the bounded operator $V_\mu|p|^{-1}$ is the bounded extension of $|p|^{-1}V_\mu$. Hence
\begin{equation}
\bigl\||p|^{-1}V_\mu\bigr\|
=\bigl\|V_\mu|p|^{-1}\bigr\|
\le
2C_{\mathrm{int}}N^{3/2}.
\label{eq:yukawa-adjoint-HLS-bound}
\end{equation}
Substituting \cref{eq:yukawa-HLS-bound-short,eq:yukawa-adjoint-HLS-bound} into the Coulomb $H^2$ flow argument \cite[Proof of Theorem~7]{FangWuSoffer2025} yields
\begin{equation}
\|(-\Delta)\psi(t)\|
\le
C\left(1+C_{\mathrm{int}}N^{3/2}
+C_{\mathrm{int}}^2N^3\right)
\|\psi_0\|_{H^2},
\label{eq:yukawa-laplacian-flow}
\end{equation}
with a universal constant $C>0$.
More explicitly, the remaining part of the argument is the same low and high frequency decomposition used in the Coulomb case to control $\||p|e^{-irH_\mu}\psi_0\|$; no new Yukawa-specific estimate is needed after replacing the Coulomb HLS bound by \cref{eq:yukawa-HLS-bound-short}.
Since the $L^2$ norm is conserved under the exact Yukawa flow $\|\psi(t)\|=\|\psi_0\|$,
we obtain the desired result \cref{eq:yukawa-H2-flow}.
\end{proof}

We also use the elementary momentum summation bound \cite[Lemma~16]{FangWuSoffer2025}
\begin{equation}
\sum_{1\le a<b\le N}
\left(
\|\langle p_a\rangle^2 f\|
+\|\langle p_b\rangle^2 f\|
\right)
\le (N-1)N^{3/2}\|f\|
+(N-1)N^{1/2}\|(-\Delta)f\|.
\label{eq:yukawa-momentum-sum-inline}
\end{equation}
This is the same bookkeeping estimate as in the Coulomb argument, following from Cauchy-Schwarz and $\sum_{a=1}^N |p_a|^2=|p|^2$.
Combining \cref{eq:yukawa-momentum-sum-inline} with \cref{eq:yukawa-laplacian-flow} and conservation of the $L^2$ norm gives, for
all $t\in\mathbb R$,
\begin{equation}
\begin{aligned}
\sum_{1\le a<b\le N}
&\left(
\|\langle p_a\rangle^2\psi(t)\|
+\|\langle p_b\rangle^2\psi(t)\|
\right) 
\le
(N-1)N^{3/2}\|\psi(t)\|
+(N-1)N^{1/2}\|(-\Delta)\psi(t)\| \\
&\qquad\le
\Bigl[
(N-1)N^{3/2}
+ C (N-1)N^{1/2}
\bigl(1+C_{\mathrm{int}}N^{3/2}+C_{\mathrm{int}}^2N^3\bigr)
\Bigr]
\|\psi_0\|_{H^2},
\end{aligned}
\label{eq:yukawa-sobolev-sum-flow}
\end{equation}
where $C>0$ is the universal constant from \cref{eq:yukawa-laplacian-flow}. In particular, the right-hand side is
$O_{C_{\mathrm{int}}}(N^{9/2})\|\psi_0\|_{H^2}$.

\section{General coupling lower bound}
\label{app:general-coupling-lower}
\begin{thm}[One-body short-time lower bound for general Yukawa coupling]
\label{thm:general-one-body-yukawa-lower}
Fix $0<\mu\le\mu_0$ and $Z\in\RR\setminus\{0\}$, and let $H_{\mu,Z}$ be defined in \cref{eq:general_Ham}. 
Define the first-order splitting operator as
\begin{equation}
U_{1,\mu,Z}(t)
:=e^{-itV_{\mu,Z}}e^{it\Delta}.
\label{eq:general-coupling-one-body-splitting}
\end{equation}
Then there exist $\psi_\ast\in C_c^\infty(\RR^3)$ with $\psi_\ast(0)\neq0$ and constants $c_{Z,\psi_\ast}>0$, $t_{0,\mu,Z,\psi_\ast}>0$ such that, for all $0<t<t_{0,\mu,Z,\psi_\ast}$,
\begin{equation}
\left\|
U_{1,\mu,Z}(t)\psi_\ast
-e^{-itH_{\mu,Z}}\psi_\ast
\right\|_{L^2(\RR^3)}
\ge c_{Z,\psi_\ast}t^{5/4}.
\label{eq:main-one-body-yukawa-lower-general}
\end{equation}
\end{thm}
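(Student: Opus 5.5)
The plan is to follow the proof of \cref{thm:main-one-body-yukawa-lower} with $\sigma=0$. The one structural change is that $\psi_\ast$ is no longer an eigenfunction, so the coefficient replacement must use continuity of the flow rather than an explicit phase.

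\textbf{Decomposition.} Fix any $\psi_\ast\in C_c^\infty(\RR^3)$ with $\psi_\ast(0)\neq0$, for instance a Gaussian bump times a cutoff. Write $V_{\mu,Z}=ZV$ with $V(x)=e^{-\mu|x|}/|x|$. Distributionally,
\[
\Delta V_{\mu,Z}=\mu^2V_{\mu,Z}-4\pi Z\delta_0 .
\]
As in \cref{eq:lower_error_three_term_decomposition}, the one-step error $U_{1,\mu,Z}(t)\psi_\ast-e^{-itH_{\mu,Z}}\psi_\ast$ then splits into three pieces:
\begin{itemize}
\item a delta term, proportional to $Z\int_0^t\int_0^s e^{-isV_{\mu,Z}}e^{i\tau\Delta}\bigl(g_{t,s,\tau}(0)\delta_0\bigr)\,d\tau\,ds$, where $g_{t,s,\tau}=e^{i(s-\tau)\Delta}e^{-i(t-s)H_{\mu,Z}}\psi_\ast$;
\item a zeroth-order term with $\mu^2V_{\mu,Z}$;
\item a first-order term with $\nabla V_{\mu,Z}\cdot\nabla$.
\end{itemize}
\cref{lem:one-body-yukawa-H2-flow} holds verbatim for coupling $Z$, because $|V_{\mu,Z}|\le|Z|/|x|$. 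So $\|g_{t,s,\tau}\|_{H^2}\le C_{\mu,Z}\|\psi_\ast\|_{H^2}$. \cref{lem:exterior_zero_order,lem:exterior_first_order}, scaled by $|Z|$, then bound the two non-delta terms by $C_{\mu,Z,\psi_\ast}(t^2+t^{3/2})=o(t^{5/4})$.

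\textbf{Coefficient replacement.} I replace $g_{t,s,\tau}(0)$ by $\psi_\ast(0)$. By Sobolev embedding,
\[
|g_{t,s,\tau}(0)-\psi_\ast(0)|\le C\|g_{t,s,\tau}-\psi_\ast\|_{H^2}
\le C\Bigl(\|(e^{i(s-\tau)\Delta}-1)\psi_\ast\|_{H^2}+\|(e^{-i(t-s)H_{\mu,Z}}-1)\psi_\ast\|_{H^2}\Bigr).
\]
The first term is $O(t)\|\psi_\ast\|_{H^4}$, using the smoothness of $\psi_\ast$. The second is $o(1)$ by strong continuity on $H^2$ (\cref{lem:one-body-yukawa-H2-flow}).

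Pointwise smallness alone is not enough, since $e^{i\tau\Delta}\delta_0\notin L^2$. I will therefore bound the replaced piece in $L^2$ by duality, as in \cref{lem:exterior_first_order}. Pairing with $h\in L^2$ gives
\[
\int_0^t\int_0^s\bigl(g_{t,s,\tau}(0)-\psi_\ast(0)\bigr)\,\overline{\bigl(e^{-i\tau\Delta}e^{isV_{\mu,Z}}h\bigr)(0)}\,d\tau\,ds .
\]
This route is a problem because point evaluation of $e^{-i\tau\Delta}\phi$ is not controlled by $\|\phi\|_{L^2}$.

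Instead I will rely on the replacement lemmas already stated (\cref{lem:replace_delta_coefficient_lower_bound,lem:replace_yukawa_phase_delta_lower_bound}). Their hypotheses only require that $s\mapsto g_{t,s,\tau}(0)$ is continuous at the endpoint with $o(1)$ modulus, plus $H^2$ bounds. Here that modulus is supplied by strong continuity instead of the eigenvalue identity $g(0)=e^{-iE_0(\cdot)}\psi_\ast(0)+o(1)$. If those lemmas use the eigenfunction property in an essential way, I would reprove the coefficient step via the Fourier identity of \cref{lem:free_delta_contribution_scaling}. The idea is to write the coefficient defect as $o(1)$ uniformly on the triangle $0\le\tau\le s\le t$, and to bound $\bigl\|\int\int c(s,\tau)e^{i\tau\Delta}\delta_0\bigr\|_{L^2}$ through its Fourier transform $\int_0^t\int_0^s c(s,\tau)e^{-i\tau|k|^2}\,d\tau\,ds$. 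Splitting $|k|\lessgtr t^{-1/2}$ bounds this crudely by $\sup|c|\,t^{5/4}$ on the low-frequency part. The high-frequency part needs an integration by parts in $\tau$, which requires $\partial_\tau c$. That is the main obstacle: $\partial_\tau g_{t,s,\tau}(0)=-i\Delta g(0)$ needs $H^4$-type control of the flow, which fails because of the singularity. I expect to need a frequency-truncated argument or an interpolation between the two regimes.

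\textbf{Phase replacement and conclusion.} Removing $e^{-isV_{\mu,Z}}$ uses $|e^{-isV_{\mu,Z}}-1|\le\min(2,s|Z|V)$, applied exactly as in \cref{lem:replace_yukawa_phase_delta_lower_bound}. The remaining term is then $4\pi Z\psi_\ast(0)F_\Delta(t,\cdot)$, whose norm is $4\pi|Z||\psi_\ast(0)|C_\Delta t^{5/4}$ by \cref{lem:free_delta_contribution_scaling}. The reverse triangle inequality then gives the claim with $c_{Z,\psi_\ast}=2\pi|Z||\psi_\ast(0)|C_\Delta$, for $t$ small.
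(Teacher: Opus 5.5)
\textbf{There is a genuine gap at the coefficient-replacement step.} You keep the interacting flow inside the coefficient, $g_{t,s,\tau}(0)=\bigl(e^{\I(s-\tau)\Delta}e^{-\I(t-s)H_{\mu,Z}}\psi_\ast\bigr)(0)$. You then appeal to \cref{lem:replace_delta_coefficient_lower_bound}, claiming its hypotheses need only an $o(1)$ modulus of continuity plus $H^2$ bounds. The lemma has no such hypotheses. It treats only the free coefficient $(e^{\I(s-\tau)\Delta}\psi_\ast)(0)$, which depends on $r=s-\tau$ alone. Its proof uses that $r\mapsto(e^{\I r\Delta}f)(0)$ is $C^1$ for smooth $f$, so that Fubini turns the defect into $\int_0^s q_f'(r)G(s-r,\cdot)\,dr$, followed by density through the uniform $H^2\to L^2$ bound on the delta piece.

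Uniform smallness of a coefficient cannot suffice, because $c\mapsto\int_0^s c(\tau)e^{\I\tau\Delta}\delta_0\,d\tau$ is unbounded from $L^\infty_\tau$ to $L^2_x$. For $c(\tau)=e^{\I\lambda\tau}$, the Fourier transform concentrates near $|k|^2=\lambda$, and the $L^2$ norm grows like $s^{1/2}\lambda^{1/4}$. Your Fourier fallback stalls at $\partial_\tau c$, as you say, because the interacting flow does not propagate regularity beyond $H^2$. So the step identifying the delta term with a multiple of $Z\psi_\ast(0)F_\Delta(t,\cdot)$ up to $o(t^{5/4})$ is left open, and this step carries the whole lower bound.

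\textbf{The missing idea} is the one the paper uses in \cref{lem:replace_yukawa_exact_flow_lower_bound}: remove the interacting flow \emph{before} splitting off the delta term, at the level of the full commutator. By \cref{lem:one-body-yukawa-commutator-upper} (whose constant scales linearly in $|Z|$),
\[
\Bigl\|\int_0^t e^{-\I sV_{\mu,Z}}\,[e^{\I s\Delta},V_{\mu,Z}]\bigl(e^{-\I(t-s)H_{\mu,Z}}\psi_\ast-\psi_\ast\bigr)\,ds\Bigr\|_{L^2}
\le C_{\mu,Z}\,\eta(t)\,t^{5/4},
\qquad
\eta(t):=\sup_{0\le r\le t}\bigl\|(e^{-\I rH_{\mu,Z}}-1)\psi_\ast\bigr\|_{H^2},
\]
and $\eta(t)\to0$ by strong continuity on $H^2$ (\cref{lem:one-body-yukawa-H2-flow}, valid for any $Z$). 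The whole commutator is bounded $H^2\to L^2$ and absorbs the delta singularity. Only the $H^2$ norm of the defect enters, not its pointwise values or its $\tau$-regularity.

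After this step the state is the fixed $\psi_\ast$, and the delta coefficient is the free one, $(e^{\I(s-\tau)\Delta}\psi_\ast)(0)$. For $\psi_\ast\in C_c^\infty$ it is $C^1$ in $s-\tau$, so \cref{lem:replace_delta_coefficient_lower_bound} applies directly. \cref{lem:exterior_zero_order,lem:exterior_first_order} apply with the fixed $f=\psi_\ast$. Your phase replacement and reverse-triangle conclusion then go through unchanged.

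You could equivalently repair it within your own decomposition. Split $g_{t,s,\tau}(0)-\psi_\ast(0)$ into the free defect plus $(e^{\I(s-\tau)\Delta}w_s)(0)$, where $w_s:=e^{-\I(t-s)H_{\mu,Z}}\psi_\ast-\psi_\ast$. Bound the inner $\tau$-integral of the second piece by $Cs^{1/4}\|w_s\|_{H^2}$, using the single-step commutator bound minus the single-step non-delta pieces.
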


\begin{lemma}[Replacement of the exact Yukawa flow]
\label[lemma]{lem:replace_yukawa_exact_flow_lower_bound}
Let
\begin{equation}
\label{eq:R_flow_yukawa_def}
R_{\mathrm{flow}}^\mu(t)
:=
\I\int_0^t
e^{-\I sV_\mu}
[e^{\I s\Delta},V_\mu]
\left(
e^{-\I(t-s)H_\mu}\psi_\ast-\psi_\ast
\right)
\,ds .
\end{equation}
Then
\begin{equation}
\label{eq:R_flow_yukawa_lower_order}
\|R_{\mathrm{flow}}^\mu(t)\|_{L^2(\RR^3)}
=o(t^{5/4})
\qquad\text{as }t\to0.
\end{equation}
\end{lemma}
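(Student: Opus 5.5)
The plan is to insert the operator-norm commutator bound of \cref{lem:one-body-yukawa-commutator-upper} directly into the definition \cref{eq:R_flow_yukawa_def}. Smallness then comes from the fact that the argument $e^{-\I(t-s)H_\mu}\psi_\ast-\psi_\ast$ tends to zero in $H^2$ as $t\to0$. For the general coupling $V_{\mu,Z}=Z e^{-\mu|x|}/|x|$, the commutator lemma applies verbatim with constant $|Z|C_\mu$, since the commutator is linear in the potential. Likewise, \cref{lem:one-body-yukawa-H2-flow} holds for $H_{\mu,Z}$ with the same Hardy/Kato--Rellich proof, because $|V_{\mu,Z}|\le |Z|/|x|$.

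\textbf{Step 1 (reduce to a scalar integral).} Use the unitarity of $e^{-\I sV_\mu}$ on $L^2$ and \cref{lem:one-body-yukawa-commutator-upper} for $0<s\le t\le1$. This gives
\begin{equation*}
\|R^\mu_{\mathrm{flow}}(t)\|_{L^2}\le C_{\mu}\int_0^t s^{1/4}\,\bigl\|e^{-\I(t-s)H_\mu}\psi_\ast-\psi_\ast\bigr\|_{H^2}\,ds
\le \tfrac45 C_{\mu}\,t^{5/4}\,\omega(t),
\end{equation*}
where $\omega(t):=\sup_{0\le r\le t}\|e^{-\I rH_\mu}\psi_\ast-\psi_\ast\|_{H^2}$. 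Here we use that $t-s\in[0,t]$ for $s\in[0,t]$.

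\textbf{Step 2 (show $\omega(t)\to0$).} Since $\psi_\ast\in C_c^\infty\subset H^2=D(H_\mu)$, \cref{lem:one-body-yukawa-H2-flow} gives strong continuity of $r\mapsto e^{-\I rH_\mu}\psi_\ast$ in $H^2$ at $r=0$. Hence $\omega(t)\to0$ as $t\to0$, and therefore $\|R^\mu_{\mathrm{flow}}(t)\|=o(t^{5/4})$.

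Concretely, the strong continuity comes from the graph-norm equivalence \cref{eq:one-body-yukawa-graph-norm} together with the two limits
\begin{equation*}
\|(e^{-\I rH_\mu}-1)\psi_\ast\|_{L^2}\to0,\qquad \|(e^{-\I rH_\mu}-1)H_\mu\psi_\ast\|_{L^2}\to0 .
\end{equation*}
In the ground-state setting of \cref{thm:main-one-body-yukawa-lower} the bound is even explicit. There $e^{-\I rH_\mu}\psi_\ast-\psi_\ast=(e^{-\I E_0 r}-1)\psi_\ast$, so $\omega(t)\le |E_0|\,t\,\|\psi_\ast\|_{H^2}$, and the remainder is $O(t^{9/4})$.

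\textbf{Main obstacle.} The delicate point is only that a quantitative rate for $\omega(t)$ is unavailable for a general test function. One would want $\omega(t)\lesssim t\|H_\mu\psi_\ast\|_{H^2}$, but this fails: $H_\mu\psi_\ast\notin H^2$ whenever $\psi_\ast(0)\neq0$, because $\Delta(V_\mu\psi_\ast)$ contains a $\delta_0$ term. We therefore rely on the qualitative strong continuity, which is exactly enough for the $o(t^{5/4})$ claim. We also check that the commutator bound is used only in its $H^2\to L^2$ operator form, so no regularity beyond $H^2$ of the flow increment is needed.
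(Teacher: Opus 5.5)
Your proposal is correct and follows essentially the same route as the paper. You bound the remainder via unitarity of $e^{-\I sV_\mu}$ and the $H^2\to L^2$ commutator estimate of \cref{lem:one-body-yukawa-commutator-upper}, and then control it by $\tfrac45 t^{5/4}\sup_{0\le r\le t}\|e^{-\I rH_\mu}\psi_\ast-\psi_\ast\|_{H^2}$, which tends to zero by the $H^2$ strong continuity in \cref{lem:one-body-yukawa-H2-flow}; your side remarks (the $O(t^{9/4})$ rate for a ground state, and why $H_\mu\psi_\ast\notin H^2$ when $\psi_\ast(0)\neq0$ blocks a quantitative rate) are correct but not needed for the $o(t^{5/4})$ claim.
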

\begin{proof}
By \cref{lem:one-body-yukawa-commutator-upper}, we have
\begin{equation}
\left\|
[e^{\I s\Delta},V_\mu]f
\right\|_{L^2}
\le
C_{\mu_0}s^{1/4}\|f\|_{H^2},
\quad 0<s\le1.
\end{equation}
Multiplication by $e^{-\I sV_\mu}$ is unitary on $L^2$. Hence
\begin{equation}
\begin{aligned}
\|R_{\mathrm{flow}}^\mu(t)\|_{L^2}
\le
C_{\mu_0}
\int_0^t
s^{1/4}
\left\|
e^{-\I(t-s)H_\mu}\psi_\ast-\psi_\ast
\right\|_{H^2}
\,ds.
\end{aligned}
\end{equation}
Here, we set
$\eta_\mu(t)
:=\sup_{0\le r\le t}
\left\|
e^{-\I rH_\mu}\psi_\ast-\psi_\ast
\right\|_{H^2}$.
By \cref{lem:one-body-yukawa-H2-flow}, the group $e^{-\I rH_\mu}$ is strongly
continuous on $H^2(\mathbb R^3)$. Therefore $\eta_\mu(t)\to0$ as
$t\to0$. Thus
\begin{equation}
\|R_{\mathrm{flow}}^\mu(t)\|_{L^2}
\le
C_{\mu_0}\eta_\mu(t)
\int_0^t s^{1/4}\,ds
=\frac45 C_{\mu_0}\eta_\mu(t)t^{5/4}
=o(t^{5/4}).
\end{equation}
The rescaled statement follows from
$\left\|
t^{-1/2}R_{\mathrm{flow}}^\mu(t)(\sqrt t\,\cdot)
\right\|_{L^2_y}
=t^{-5/4}
\|R_{\mathrm{flow}}^\mu(t)\|_{L^2_x}$.
\end{proof}

\begin{proof}[Proof of \cref{thm:general-one-body-yukawa-lower}]
Choose $\psi_\ast\in C_c^\infty(\RR^3)$ with $\psi_\ast(0)\neq0$. By the local error representation and the exact flow replacement estimate \cref{lem:replace_yukawa_exact_flow_lower_bound}, the error reduces to the same leading singular form as in the proof of \cref{thm:main-one-body-yukawa-lower}, up to an $o(t^{5/4})$ remainder. The only change is the distributional identity
$\Delta V_{\mu}=\mu^2V_{\mu}-4\pi Z\delta_0$.
Thus the free delta contribution has coefficient $-4\pi Z\psi_\ast(0)$, while the non-delta terms and the phase replacement error remain lower order. Therefore,
\begin{equation}
\left\|U_{1,\mu,Z}(t)\psi_\ast
-e^{-itH_{\mu,Z}}\psi_\ast
\right\|_{L^2(\RR^3)}
\ge 4\pi |Z|\,|\psi_\ast(0)|C_\Delta t^{5/4}
-o(t^{5/4}).
\end{equation}
Since $Z\neq0$ and $\psi_\ast(0)\neq0$, the desired bound follows for all sufficiently small $t$.
\end{proof}
\section{Replacement Lemmas}
\label{app:delta-replacement-proofs}

\begin{lemma}[One-layer free delta estimate]
\label[lemma]{lem:free-delta-primitive-bound}
For $0<s\le1$, define
\begin{equation} \label{eq:G}
G(s,x):=\int_0^s e^{\I\tau\Delta_x}\delta_0(x)\,d\tau .
\end{equation}
Then $G(s,\cdot)\in L^2(\RR^3)$ and
\begin{equation}
\label{eq:free-delta-primitive-bound}
\|G(s,\cdot)\|_{L^2(\mathbb R^3)}
\le
C s^{1/4},
\quad 0<s\le1.
\end{equation}
For later use, if
$L_\theta(y):=\int_0^\theta e^{\I\alpha\Delta_y}\delta_0(y)\,d\alpha$, then
\begin{equation}
\label{eq:L-theta-bound}
\|L_\theta\|_{L^2_y}
\le
C\theta^{1/4},
\quad 0<\theta\le1.
\end{equation}
\end{lemma}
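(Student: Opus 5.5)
The plan is to repeat the Fourier-side scaling argument of \cref{lem:free_delta_contribution_scaling}, now with a single time integral instead of two. Using the convention \cref{eq:fourier_convention}, we have $\widehat{e^{\I\tau\Delta}\delta_0}(k)=e^{-\I\tau|k|^2}$. Hence, interpreting $G(s,\cdot)$ first as a tempered distribution, we get
\begin{equation}
\widehat{G}(s,k)=\int_0^s e^{-\I\tau|k|^2}\,d\tau
=s\int_0^1 e^{-\I\theta s|k|^2}\,d\theta
=s\,m(k\sqrt s),
\qquad m(q):=\int_0^1 e^{-\I\theta|q|^2}\,d\theta .
\end{equation}
Once we know $m\in L^2(\RR^3)$, the distribution $\widehat G(s,\cdot)$ is an $L^2$ function. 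Plancherel then gives $G(s,\cdot)\in L^2$ together with an exact norm identity.

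The key step is therefore to show $m\in L^2(\RR^3)$. Integrating directly, with $a=|q|^2$, gives $m(q)=(1-e^{-\I a})/(\I a)$. This function is bounded by $1$ everywhere and by $2/|q|^2$ for $|q|\ge1$. Consequently
\begin{equation}
\int_{\RR^3}|m(q)|^2\,dq\le C+4\int_{|q|\ge1}|q|^{-4}\,dq<\infty,
\end{equation}
since $|q|^{-4}$ is integrable at infinity in three dimensions. This is the only point where the argument could fail. The one-layer symbol decays only like $|q|^{-2}$, rather than gaining an extra power as $h$ did in \cref{lem:free_delta_contribution_scaling}, and this decay is exactly the borderline needed for $L^2$ in $\RR^3$. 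Dimension $3$ is essential here.

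The scaling computation then runs as in \cref{eq:F_delta_parseval_scaling_lower,eq:h_scaling_explicit_lower}. Substituting $\widetilde k=k\sqrt s$, so that $dk=s^{-3/2}d\widetilde k$, we obtain
\begin{equation}
\|G(s,\cdot)\|_{L^2}^2=\frac{s^2}{(2\pi)^3}\int_{\RR^3}|m(k\sqrt s)|^2\,dk
=\frac{s^{2-3/2}}{(2\pi)^3}\|m\|_{L^2}^2 .
\end{equation}
This yields the exact identity $\|G(s,\cdot)\|_{L^2}=(2\pi)^{-3/2}\|m\|_{L^2}\,s^{1/4}$, which is in fact valid for every $s>0$. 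In particular it gives \cref{eq:free-delta-primitive-bound} for $0<s\le1$. Finally, $L_\theta$ is the same object as $G(\theta,\cdot)$ with $\theta$ in place of $s$ and the variable renamed to $y$. So \cref{eq:L-theta-bound} follows immediately with the same constant.
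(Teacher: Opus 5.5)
Your proposal is correct and follows the paper's proof: compute $\widehat{G}(s,k)=(1-e^{-\I s|k|^2})/(\I|k|^2)$, rescale $k\mapsto k\sqrt s$ to pull out $s^{1/2}$ from the squared norm, and check that the rescaled symbol is in $L^2$ because it is bounded near the origin and its square decays like $|q|^{-4}$. Your exact identity $\|G(s,\cdot)\|_{L^2}=(2\pi)^{-3/2}\|m\|_{L^2}s^{1/4}$ is just the sharp form of the paper's inequality. One aside is inaccurate but harmless: $h$ from \cref{lem:free_delta_contribution_scaling} also decays only like $|q|^{-2}$, since its leading term is $1/(\I|q|^2)$, and $|q|^{-2}$ decay is not borderline for $L^2(\RR^3)$, because any decay faster than $|q|^{-3/2}$ suffices.
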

\begin{proof}
Using the same Fourier convention as in \cref{eq:fourier_convention}, and using $\widehat{\delta_0}=1$ and
$\widehat{e^{\I\tau\Delta}f}(\xi)
=e^{-\I\tau|\xi|^2}\widehat f(\xi)$,
we obtain
\begin{equation}
\widehat{G(s)}(\xi)
=\int_0^s e^{-\I\tau|\xi|^2}\,d\tau
=\frac{1-e^{-\I s|\xi|^2}}{\I|\xi|^2},
\end{equation}
where the quotient is understood by continuous extension at $\xi=0$. 
By Plancherel, with the harmless normalization constant absorbed into $C$,
\begin{equation}
\|G(s)\|_{L^2}^2
\le
C\int_{\mathbb R^3}
\frac{|1-e^{-\I s|\xi|^2}|^2}{|\xi|^4}\,d\xi .
\end{equation}
Changing variables $\eta=\sqrt{s}\xi$, we obtain
\begin{equation}
\|G(s)\|_{L^2}^2
\le
Cs^{1/2}
\int_{\mathbb R^3}
\frac{|1-e^{-\I|\eta|^2}|^2}{|\eta|^4}\,d\eta .
\end{equation}
The last integral is finite, because near $\eta=0$,
$|1-e^{-\I|\eta|^2}|\lesssim |\eta|^2$,
while for large $|\eta|$, the integrand is bounded by $C|\eta|^{-4}$,
which is integrable in dimension three. Hence
$\|G(s)\|_{L^2}
\le
Cs^{1/4}$.
The estimate for $L_\theta$ follows from the same argument, with $s=\theta$ and the variable $x$ replaced by $y$.
\end{proof}

\begin{lemma}[Replacement of the Yukawa phase in the leading delta term]
\label[lemma]{lem:replace_yukawa_phase_delta_lower_bound}
Let $G$ be defined as in \cref{lem:free-delta-primitive-bound}. Define
\begin{equation}
R_{\mathrm{phase},\delta}^\mu(t)
:=\int_0^t
\left(e^{-\I sV_\mu}-1\right)
G(s,\cdot)\,ds .
\end{equation}
Then
\begin{equation}
\|R_{\mathrm{phase},\delta}^\mu(t)\|_{L^2}
=o(t^{5/4})
\quad
\text{as }t\to0.
\end{equation}
\end{lemma}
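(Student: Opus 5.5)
The task is to show that $\int_0^t(e^{-\I sV_\mu}-1)G(s,\cdot)\,ds$ has $L^2$ norm $o(t^{5/4})$. A crude pointwise bound is not enough: $\|G(s)\|_{L^2}\le Cs^{1/4}$ by \cref{lem:free-delta-primitive-bound}, and $|e^{-\I sV_\mu}-1|\le 2$, so Minkowski alone only gives $O(t^{5/4})$. The plan is to split space at a radius $\rho>0$ and exploit two facts. Away from the origin $V_\mu$ is bounded, so the phase factor is $O(s)$. Near the origin the $L^2$ mass of $G(s)$ in a small ball is a small fraction of $s^{1/4}$, uniformly in $s$.

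First, write $|e^{-\I sV_\mu(x)}-1|\le\min(2,s|V_\mu(x)|)\le\min(2,s/|x|)$. For $|x|\ge\rho$ this is at most $s/\rho$. Minkowski's inequality in $s$ then gives
\begin{equation*}
\Bigl\|\mathbf 1_{\{|x|\ge\rho\}}R^\mu_{\mathrm{phase},\delta}(t)\Bigr\|_{L^2}\le \rho^{-1}\int_0^t s\,\|G(s)\|_{L^2}\,ds\le C\rho^{-1}t^{9/4}.
\end{equation*}
Near the origin, bound the phase factor by $2$, so that
\begin{equation*}
\Bigl\|\mathbf 1_{\{|x|<\rho\}}R^\mu_{\mathrm{phase},\delta}(t)\Bigr\|_{L^2}\le 2\int_0^t\|\mathbf 1_{\{|x|<\rho\}}G(s)\|_{L^2}\,ds.
\end{equation*}

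The key step, and the one I expect to be the main obstacle, is to use the parabolic scaling of $G$. From the Fourier formula in the proof of \cref{lem:free-delta-primitive-bound}, one has $G(s,x)=s^{-1/2}G(1,x/\sqrt s)$, because $\widehat{G(s)}(\xi)=s\,\widehat{G(1)}(\sqrt s\,\xi)$. Hence
\begin{equation*}
\|\mathbf 1_{\{|x|<\rho\}}G(s)\|_{L^2}=s^{1/4}\,\|\mathbf 1_{\{|y|<\rho/\sqrt s\}}G(1)\|_{L^2_y}.
\end{equation*}
This localized norm does not vanish as $s\to0$, since the ball $|y|<\rho/\sqrt s$ grows. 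The right move is therefore to couple $\rho$ to $t$. Take $\rho=\rho(t)\to0$ but with $\rho/\sqrt t\to0$ as well, for instance $\rho=t^{3/4}$. Then for all $s\le t$ we have $\rho/\sqrt s\ge\rho/\sqrt t$, which is the wrong direction for a small-ball bound. So instead I will use the exact scaling to bound the inner region honestly: $\|\mathbf 1_{\{|y|<R\}}G(1)\|_{L^2}\to0$ as $R\to0$ by dominated convergence, since $G(1)\in L^2$. The inner contribution is then at most $2\int_0^t s^{1/4}\epsilon(\rho/\sqrt s)\,ds$, where $\epsilon(R)\to0$ as $R\to0$ and $\epsilon\le\|G(1)\|$. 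Choose $\rho=t^{\gamma}$ with $1/2<\gamma<5/4$. On $s\in[\delta t,t]$ the ratio $\rho/\sqrt s\le \delta^{-1/2}t^{\gamma-1/2}\to0$, so that piece is $o(t^{5/4})$. The piece $s<\delta t$ contributes at most $C\delta^{5/4}t^{5/4}$. The outer region contributes $Ct^{9/4-\gamma}$, which is $o(t^{5/4})$ because $\gamma<1$; so take, e.g., $\gamma=3/4$.

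Finally, combine the two regions and let $t\to0$ and then $\delta\to0$. This yields $\limsup_{t\to0} t^{-5/4}\|R^\mu_{\mathrm{phase},\delta}(t)\|_{L^2}\le C\delta^{5/4}$ for every $\delta>0$, which proves the claim. The only delicate points are verifying the scaling identity for $G$ from its explicit Fourier symbol $(1-e^{-\I s|\xi|^2})/(\I|\xi|^2)$, and checking that $V_\mu\le1/|x|$ makes the outer phase bound valid uniformly in $\mu$.
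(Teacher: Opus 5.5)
Your proof is correct, but it is organized differently from the paper's. The paper rescales both variables at once ($s=t\theta$, $x=\sqrt t\,y$). Using $G(t\theta,\sqrt t\,y)=t^{-1/2}L_\theta(y)$, this turns $t^{-5/4}\|R^\mu_{\mathrm{phase},\delta}(t)\|_{L^2}$ into the $L^2_y$ norm of $\int_0^1(m^\mu_{t,\theta}-1)L_\theta\,d\theta$. The rescaled phase $m^\mu_{t,\theta}(y)=\exp(i\theta\sqrt t\,e^{-\mu\sqrt t|y|}/|y|)$ tends to $1$ pointwise. Two applications of dominated convergence then finish the proof: first in $y$ for fixed $\theta$, then in $\theta$ with majorant $C\theta^{1/4}$.

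You instead stay in the original variables and use a spatial cutoff at $\rho=t^{3/4}$. Outside the ball, the Lipschitz bound $|e^{-isV_\mu}-1|\le s/\rho$ gives $O(t^{3/2})$. Inside, the parabolic scaling of $G$ and the absolute continuity of $\|\mathbf 1_{\{|y|<R\}}G(1)\|_{L^2}$ show that a ball of radius $t^{3/4}$ holds a vanishing fraction of the mass of $G(s)$, whose natural length scale is $\sqrt s$. This fails only for $s<\delta t$, which you handle with the crude bound. Your route makes the mechanism explicit and visibly uses only $|V_\mu|\le|x|^{-1}$, at the cost of extra $\rho$ and $\delta$ bookkeeping; the paper's version is shorter.

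One small slip: the admissible range is $1/2<\gamma<1$, not $1/2<\gamma<5/4$, because the outer term is $Ct^{9/4-\gamma}$. Your choice $\gamma=3/4$ is fine.

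You could also merge your two regions. The bound $|e^{-isV_\mu}-1|\le\min(2,s/|x|)$ together with the scaling of $G$ gives $\|(e^{-isV_\mu}-1)G(s)\|_{L^2}\le s^{1/4}\|\min(2,\sqrt s/|y|)\,G(1)\|_{L^2_y}$. By dominated convergence the last factor is $o(1)$ as $s\to0$, so integrating in $s$ yields $o(t^{5/4})$ directly.
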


\begin{proof}
Set $s=t\theta$ and $x=\sqrt t\,y$. Since
\begin{equation}
V_\mu(\sqrt t\,y)
=-t^{-1/2}
\frac{e^{-\mu\sqrt t|y|}}{|y|},
\end{equation}
we define
$m^\mu_{t,\theta}(y)
:=e^{-\I t\theta V_\mu(\sqrt t\,y)}
=\exp\left(
\I\theta\sqrt t
\frac{e^{-\mu\sqrt t|y|}}{|y|}
\right)$.
By the Schr\"odinger scaling,
\begin{equation}
G(t\theta,\sqrt t\,y)
=t^{-1/2}L_\theta(y), \quad 
L_\theta(y):=\int_0^\theta e^{\I\alpha\Delta_y}\delta_0(y)\,d\alpha.
\end{equation}
Therefore,
\begin{equation}
t^{-1/2}R_{\mathrm{phase},\delta}^\mu(t)(\sqrt t\,y)
=\int_0^1
\left(m^\mu_{t,\theta}(y)-1\right)
L_\theta(y)
\,d\theta .
\end{equation}
For each fixed $0<\theta\le1$, we have $m^\mu_{t,\theta}(y)\to1 \
\text{for a.e. }y\in\mathbb R^3$.
Moreover, $|m^\mu_{t,\theta}(y)-1|\le2$.
Since $L_\theta\in L^2_y$ by \cref{eq:L-theta-bound} of \cref{lem:free-delta-primitive-bound}, dominated convergence in
$y$ gives
\begin{equation}
\left\|
\left(m^\mu_{t,\theta}-1\right)L_\theta
\right\|_{L^2_y}
\to0
\quad
\text{for each fixed }0<\theta\le1.
\end{equation}
On the other hand, by \cref{eq:L-theta-bound},
\begin{equation}
\left\|
\left(m^\mu_{t,\theta}-1\right)L_\theta
\right\|_{L^2_y}
\le
2\|L_\theta\|_{L^2_y}
\le
C\theta^{1/4}.
\end{equation}
Since $\theta^{1/4}\in L^1(0,1)$, dominated convergence in $\theta$, together with Minkowski's inequality give us
$t^{-1/2}R_{\mathrm{phase},\delta}^\mu(t)(\sqrt t\,\cdot)
\to 0
\quad
\text{in }L^2_y.$
It remains to translate this rescaled convergence back to the original
$L^2_x$ norm. By the change of variables $x=\sqrt t\,y$,
\begin{equation}
\left\|
t^{-1/2}R_{\mathrm{phase},\delta}^\mu(t)(\sqrt t\,\cdot)
\right\|_{L^2_y}
=t^{-5/4}
\|R_{\mathrm{phase},\delta}^\mu(t)\|_{L^2_x}.
\end{equation}
Since the left-hand side tends to $0$, we obtain
$t^{-5/4}
\|R_{\mathrm{phase},\delta}^\mu(t)\|_{L^2_x}
\to0$.
Equivalently,
\begin{equation}
\|R_{\mathrm{phase},\delta}^\mu(t)\|_{L^2_x}
=o(t^{5/4}),
\end{equation}
which proves the original $L^2_x$ statement.
\end{proof}

\begin{lemma}[Replacement of the delta coefficient]
\label[lemma]{lem:replace_delta_coefficient_lower_bound}
Let $\psi_\ast\in H^2(\RR^3)$, and set
\[
g_{s,\tau}:=e^{\I(s-\tau)\Delta}\psi_\ast,
\qquad
0\le \tau\le s\le t.
\]
Then
\begin{equation}
\label{eq:delta-coefficient-replacement-conclusion}
-4\pi
\int_0^t
e^{-\I s V_\mu}
\int_0^s
g_{s,\tau}(0)e^{\I\tau\Delta}\delta_0
\,d\tau\,ds
=
-4\pi\psi_\ast(0)
\int_0^t e^{-\I s V_\mu}G(s,\cdot)\,ds
+o_{L^2_x}(t^{5/4}),
\end{equation}
where $G(s,\cdot)$ is defined in \cref{eq:G}.
\end{lemma}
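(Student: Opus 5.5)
The plan is to write the difference of the two sides as a single remainder and show it is $o(t^{5/4})$ in $L^2_x$ using the Schr\"odinger rescaling $s=t\theta$, $\tau=t\theta\alpha'$, $x=\sqrt t\,y$, as in the proof of \cref{lem:replace_yukawa_phase_delta_lower_bound}. Concretely, the remainder is
\begin{equation*}
R_{\mathrm{coef}}(t):=-4\pi\int_0^t e^{-\I sV_\mu}\int_0^s\bigl(g_{s,\tau}(0)-\psi_\ast(0)\bigr)e^{\I\tau\Delta}\delta_0\,d\tau\,ds .
\end{equation*}
The coefficient $c(s,\tau):=g_{s,\tau}(0)-\psi_\ast(0)=(e^{\I(s-\tau)\Delta}\psi_\ast-\psi_\ast)(0)$ is a scalar. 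By the Sobolev embedding $H^2(\RR^3)\hookrightarrow L^\infty$ and strong continuity of the free group on $H^2$, it satisfies $|c(s,\tau)|\le \omega(t):=C\sup_{0\le r\le t}\|e^{\I r\Delta}\psi_\ast-\psi_\ast\|_{H^2}\to0$ uniformly for $0\le\tau\le s\le t$.

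The difficulty is that a scalar weight inside the $\tau$-integral cannot be pulled out pointwise. The bound $\|e^{\I\tau\Delta}\delta_0\|_{L^2}=\infty$ gives nothing, and only the time-integrated object $G(s,\cdot)$ is in $L^2$, with norm $Cs^{1/4}$ by \cref{lem:free-delta-primitive-bound}. I would therefore integrate by parts in $\tau$, writing $e^{\I\tau\Delta}\delta_0=\partial_\tau G(\tau,\cdot)$. This gives
\begin{equation*}
\int_0^s c(s,\tau)\,\partial_\tau G(\tau)\,d\tau = c(s,s)G(s)-\int_0^s \partial_\tau c(s,\tau)\,G(\tau)\,d\tau ,
\end{equation*}
with $c(s,s)=0$. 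Next I compute $\partial_\tau c(s,\tau)=-(\I\Delta e^{\I(s-\tau)\Delta}\psi_\ast)(0)$. This needs $\Delta\psi_\ast$ to have a well-defined value at $0$, which fails for a general $H^2$ function.

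So the main step, and the expected obstacle, is a density/regularization argument that does not lose the $t^{5/4}$ scale. I would split $\psi_\ast=\phi_\epsilon+(\psi_\ast-\phi_\epsilon)$ with $\phi_\epsilon\in\mathcal S$ and $\|\psi_\ast-\phi_\epsilon\|_{H^2}\le\epsilon$.

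For the smooth part, $|\partial_\tau c|\le C\|\phi_\epsilon\|_{H^4}$. Using the unitarity of $e^{-\I sV_\mu}$ and Minkowski, the contribution is at most
\begin{equation*}
C_\epsilon\int_0^t\int_0^s\tau^{1/4}\,d\tau\,ds=O_\epsilon(t^{9/4}).
\end{equation*}

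For the rough part, integration by parts is not available, so a bound that only uses $|c|\le C\|\psi_\ast-\phi_\epsilon\|_{H^2}$ is needed. For this I would work in Fourier space. The $\tau$-integral equals the $L^2$ function with transform
\begin{equation*}
\frac{1}{(2\pi)^3}\int\widehat{\rho}(\eta)\,e^{-\I s|\eta|^2}\int_0^s e^{\I\tau(|\eta|^2-|\xi|^2)}\,d\tau\,d\eta ,\qquad \rho=\psi_\ast-\phi_\epsilon .
\end{equation*}
The aim is to bound its $L^2_\xi$ norm by $C s^{1/4}\|\rho\|_{H^2}$. The route is Minkowski in $\eta$, then the kernel estimate
\begin{equation*}
\Bigl\|\int_0^s e^{\I\tau(|\eta|^2-|\xi|^2)}\,d\tau\Bigr\|_{L^2_\xi}\lesssim s^{1/4}
\end{equation*}
uniformly in $\eta$, which I expect from the same scaling as in \cref{lem:free-delta-primitive-bound}, and finally $\|\widehat\rho\|_{L^1}\lesssim\|\rho\|_{H^2}$. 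With this, the rough part contributes at most $C\epsilon\, t^{5/4}$.

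Combining the two pieces gives $\limsup_{t\to0} t^{-5/4}\|R_{\mathrm{coef}}(t)\|_{L^2}\le C\epsilon$ for every $\epsilon>0$, which proves \cref{eq:delta-coefficient-replacement-conclusion}. The uniform-in-$\eta$ kernel bound is the step I would verify most carefully, since the resonant region $|\xi|\approx|\eta|$ must still give a $s^{1/4}$ gain rather than something that grows with $|\eta|$.
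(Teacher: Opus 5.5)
Your smooth-part argument is correct and is the paper's argument in different notation. Integrating by parts against $G(\tau,\cdot)$ gives $\int_0^s q'(s-\tau)G(\tau,\cdot)\,d\tau$, which is exactly the paper's FTC/Fubini identity, and $\|G(\tau,\cdot)\|_{L^2}\le C\tau^{1/4}$ then gives $O_\epsilon(t^{9/4})$.

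The gap is in the rough part, which is the crux of the density step. The uniform kernel bound you propose is false. With $a=|\eta|^2$ and $w=s(|\xi|^2-a)/2$,
\[
\Bigl\|\int_0^s e^{\I\tau(|\eta|^2-|\xi|^2)}\,d\tau\Bigr\|_{L^2_\xi}^2
=4\pi s\int_{-sa/2}^{\infty}\Bigl(a+\frac{2w}{s}\Bigr)^{1/2}\frac{\sin^2 w}{w^2}\,dw ,
\]
and for $s|\eta|^2\gg1$ this is $\approx 4\pi^2 s|\eta|$. The reason is that the resonant shell $\bigl||\xi|^2-|\eta|^2\bigr|\lesssim s^{-1}$ has volume $\sim|\eta|/s$, and the kernel has size $\sim s$ there. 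So the true size of the kernel norm is $\sim s^{1/4}+s^{1/2}|\eta|^{1/2}$, which is exactly the danger you flagged.

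Even with this corrected bound, Minkowski in $\eta$ would need $\int|\widehat\rho(\eta)|\,|\eta|^{1/2}\,d\eta\lesssim\|\rho\|_{H^2}$, and this fails in $\RR^3$. Cauchy--Schwarz against $\langle\eta\rangle^{-4}|\eta|$ diverges logarithmically. For example, $\widehat\rho=\langle\eta\rangle^{-7/2}\log^{-3/4}(2+|\eta|)$ lies in $H^2$ but has $\int|\widehat\rho|\,|\eta|^{1/2}=\infty$. If $\psi_\ast$ has this property, so does $\psi_\ast-\phi_\epsilon$ for every Schwartz $\phi_\epsilon$, so no choice of regularization saves the route. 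Minkowski in $\eta$ discards the cancellation between frequency shells, and at exactly $H^2$ regularity that cancellation is needed.

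What is missing is the uniform bound
\[
\Bigl\|\int_0^s \bigl(e^{\I(s-\tau)\Delta}h\bigr)(0)\,e^{\I\tau\Delta}\delta_0\,d\tau\Bigr\|_{L^2}\le C_\mu s^{1/4}\|h\|_{H^2},\qquad h\in H^2(\RR^3),\ 0<s\le1 .
\]
You should get it not from the explicit kernel but from the identity $4\pi\delta_0=\Delta V_\mu-\mu^2V_\mu$. By Duhamel, $4\pi$ times the integral on the left equals
\[
-\I[e^{\I s\Delta},V_\mu]h-\int_0^s e^{\I\tau\Delta}\bigl(\mu^2V_\mu+2\nabla V_\mu\cdot\nabla\bigr)e^{\I(s-\tau)\Delta}h\,d\tau .
\]
The three pieces are bounded as follows:
\begin{itemize}
\item the commutator by $C_\mu s^{1/4}\|h\|_{H^2}$, from \cref{lem:one-body-yukawa-commutator-upper};
\item the zeroth-order term by $C_\mu s\|h\|_{H^2}$, from the argument of \cref{lem:exterior_zero_order};
\item the first-order term by $C_\mu s^{1/2}\|h\|_{H^2}$, from the duality and Kato smoothing argument of \cref{lem:exterior_first_order}, taken before the $s$-integration.
\end{itemize}
This is the $H^2\to L^2$ bound for the delta contribution on which the paper's density step relies.

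Your Fourier formula represents the coefficient $(e^{\I(s-\tau)\Delta}\rho)(0)$, not $c(s,\tau)$, so the $-\rho(0)G(s,\cdot)$ piece must be added separately. It is controlled by $|h(0)|\,\|G(s,\cdot)\|_{L^2}\le Cs^{1/4}\|h\|_{H^2}$. Together these give a rough contribution of $C\epsilon\,t^{5/4}$, and your $\limsup$ argument then closes as in the paper.
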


\begin{proof}
For the delta contribution, we must also justify replacing the
time-dependent coefficient $g_{s,\tau}(0)$ by its limiting value
$\psi_\ast(0)$. This is analogous in spirit to the preceding phase
replacement, where a pointwise in time control on the factor inside the
integral is not enough to identify the leading time-integrated term.
Here the relevant pointwise control is the Sobolev bound $|g_{s,\tau}(0)|\le C\|\psi_\ast\|_{H^2}$, which gives only an upper bound. 
It does not by itself rule out the possibility that the variation of $g_{s,\tau}(0)$ contributes at the same $t^{5/4}$ scale as the leading delta term, and hence could modify or partially cancel the leading contribution.
We therefore isolate
this coefficient variation and show that it is lower order.

By the definition of $g_{s,\tau}$, we have
$g_{s,\tau}(0)-\psi_\ast(0)
=\left(e^{\I(s-\tau)\Delta}\psi_\ast\right)(0)
-\psi_\ast(0)$.
Thus, it suffices to prove
\begin{equation}
\left\|
\int_0^t
e^{-\I s V_\mu}
\int_0^s
\left[
\left(e^{\I(s-\tau)\Delta}\psi_\ast\right)(0)
-\psi_\ast(0)
\right]
e^{\I\tau\Delta}\delta_0
\,d\tau\,ds
\right\|_{L^2}
=o(t^{5/4}).
\label{eq:coeff-delta-remainder-small}
\end{equation}
To prove \cref{eq:coeff-delta-remainder-small}, it suffices to establish
the corresponding estimate for arbitrary $f\in C_c^\infty(\RR^3)$ and
then use the uniform $H^2$ estimate proved below to pass to
$\psi_\ast\in H^2(\RR^3)$ by density. 
Now notice that, for $f\in C_c^\infty(\RR^3)$, the Schr\"odinger group
$e^{\I r\Delta}$ gives a differentiable time-dependent coefficient. More
precisely, set
$q_f(r):=\left(e^{\I r\Delta}f\right)(0)$.
Since $f\in C_c^\infty(\RR^3)$, the map
$r\mapsto e^{\I r\Delta}f$ is continuously differentiable with values
in $H^2(\RR^3)$, and $q_f'(r)
=\I\left(e^{\I r\Delta}\Delta f\right)(0)$.
The strong continuity of the Schrödinger group on $H^2(\RR^3)$,
together with the embedding
$H^2(\RR^3)\hookrightarrow C^0(\RR^3)$, shows that $q_f'$ is
continuous. In particular, $\sup_{0\le r\le1}|q_f'(r)|<\infty$.

Since $q_f(0)=f(0)$, by the fundamental theorem of calculus 
$q_f(s-\tau)-q_f(0)
=\int_0^{s-\tau}q_f'(r)\,dr$.
Thus, by Fubini's theorem, we have
\begin{equation}
\int_0^s
\left(q_f(s-\tau)-q_f(0)\right)
e^{\I\tau\Delta}\delta_0
\,d\tau
=\int_0^s
q_f'(r)
\left(
\int_0^{s-r}e^{\I\tau\Delta}\delta_0\,d\tau
\right)
\,dr.
\end{equation}
Returning to the delta integral, we obtain
\begin{equation}
\begin{aligned}
\left\|
\int_0^s
\left(q_f(s-\tau)-q_f(0)\right)
e^{\I\tau\Delta}\delta_0
\,d\tau
\right\|_{L^2}
&\le
\sup_{0\le r\le1}|q_f'(r)|
\int_0^s
\left\|
\int_0^{s-r}e^{\I\tau\Delta}\delta_0\,d\tau
\right\|_{L^2}
\,dr                                    \\
&\le
C\sup_{0\le r\le1}|q_f'(r)|
\int_0^s(s-r)^{1/4}\,dr            
\le
C_f s^{5/4},
\end{aligned}
\end{equation}
where the first inequality follows from the preceding Fubini identity
and Minkowski's inequality, together with the boundedness of $q_f'$,
while the second follows from
\cref{eq:free-delta-primitive-bound}.
Since multiplication by $e^{-\I s V_\mu}$ is unitary on $L^2$, it
follows that
\begin{equation}\label{eq:small_oh}
\begin{aligned}
&\left\|
\int_0^t
e^{-\I s V_\mu}
\int_0^s
\left[
\left(e^{\I(s-\tau)\Delta}f\right)(0)-f(0)
\right]
e^{\I\tau\Delta}\delta_0
\,d\tau\,ds
\right\|_{L^2}                            \\
&\le
\int_0^t
\left\|
\int_0^s
\left[
\left(e^{\I(s-\tau)\Delta}f\right)(0)-f(0)
\right]
e^{\I\tau\Delta}\delta_0
\,d\tau
\right\|_{L^2}
\,ds                                   \le
C_f\int_0^t s^{5/4}\,ds
\le
C_f t^{9/4}
=o(t^{5/4}).
\end{aligned}
\end{equation}
Here, the first inequality uses the triangle inequality and the
unitarity of multiplication by $e^{-\I s V_\mu}$, while the second
uses the preceding $s^{5/4}$ estimate.

It remains to extend the result to $\psi_\ast\in H^2(\RR^3)$. This
is a standard density argument, which we include for completeness.
For this purpose, we first record the uniform $H^2$ estimate needed
to pass to the limit.
On the one hand, the $H^2\to L^2$ estimate for the delta contribution gives
\begin{equation}
\left\|
\int_0^t
e^{-\I s V_\mu}
\int_0^s
\left(e^{\I(s-\tau)\Delta}h\right)(0)
e^{\I\tau\Delta}\delta_0
\,d\tau\,ds
\right\|_{L^2}
\le
C_\mu t^{5/4}\|h\|_{H^2},
\quad
h\in H^2(\RR^3).
\end{equation}
On the other hand, since $G(s,\cdot)$ is defined in \cref{eq:G}, Sobolev embedding and \cref{eq:free-delta-primitive-bound} yield
\begin{equation}
\left\|
h(0)\int_0^t e^{-\I s V_\mu}G(s,\cdot)\,ds
\right\|_{L^2}
\le
|h(0)|\int_0^t\|G(s,\cdot)\|_{L^2}\,ds    
\le
C\|h\|_{H^2}\int_0^t s^{1/4}\,ds         
\le
C t^{5/4}\|h\|_{H^2}.
\end{equation}
Therefore, subtracting the constant coefficient term and using the
triangle inequality, we obtain
\begin{equation}
\left\|
\int_0^t
e^{-\I s V_\mu}
\int_0^s
\left[
\left(e^{\I(s-\tau)\Delta}h\right)(0)-h(0)
\right]
e^{\I\tau\Delta}\delta_0
\,d\tau\,ds
\right\|_{L^2}
\le
C_\mu t^{5/4}\|h\|_{H^2}.
\label{eq:coeff-delta-uniform-H2-bound}
\end{equation}
In particular, we can choose $f_n\in C_c^\infty(\RR^3)$ such that
$\|f_n-\psi_\ast\|_{H^2}\to0$.
Using \cref{eq:small_oh} with the smooth function $f_n$, and
\cref{eq:coeff-delta-uniform-H2-bound} with $h=\psi_\ast-f_n$, we obtain:
\begin{equation*}
\begin{aligned}
&\limsup_{t\downarrow0}
t^{-5/4}
\left\|
\int_0^t
e^{-\I s V_\mu}
\int_0^s
\left[
\left(e^{\I(s-\tau)\Delta}\psi_\ast\right)(0)
-\psi_\ast(0)
\right]
e^{\I\tau\Delta}\delta_0
\,d\tau\,ds
\right\|_{L^2}                            \\
&\quad\le
\limsup_{t\downarrow0}
t^{-5/4}
\left\|
\int_0^t
e^{-\I s V_\mu}
\int_0^s
\left[
\left(e^{\I(s-\tau)\Delta}f_n\right)(0)-f_n(0)
\right]
e^{\I\tau\Delta}\delta_0
\,d\tau\,ds
\right\|_{L^2}
+C_\mu\|\psi_\ast-f_n\|_{H^2}             \\
&\quad=
C_\mu\|\psi_\ast-f_n\|_{H^2}.
\end{aligned}
\end{equation*}
The last equality follows from the smooth estimate, since $n$ is
fixed when $t\downarrow0$. Letting $n\to\infty$ proves
\cref{eq:coeff-delta-remainder-small}.
Finally, writing
$g_{s,\tau}(0)
=\psi_\ast(0)
+\left[
\left(e^{\I(s-\tau)\Delta}\psi_\ast\right)(0)
-\psi_\ast(0)
\right]$,
and using the definition of $G$ in \cref{eq:G}, we conclude that
\begin{equation}
-4\pi
\int_0^t
e^{-\I s V_\mu}
\int_0^s
g_{s,\tau}(0)e^{\I\tau\Delta}\delta_0
\,d\tau\,ds                          =-4\pi\psi_\ast(0)
\int_0^t e^{-\I s V_\mu}G(s,\cdot)\,ds
+o_{L^2_x}(t^{5/4}).
\end{equation}
This is \cref{eq:delta-coefficient-replacement-conclusion}.
\end{proof}

\section{Proof of \cref{lem:exterior_zero_order}}
\label{app:non_delta}

\begin{proof}
By Minkowski's integral inequality and the unitarity of the free Schr\"odinger group on $L^2(\RR^3)$,
\begin{equation}
\begin{aligned}
\left\|
\int_0^t\int_0^s
e^{i\tau\Delta}
\mu^2 V_\mu e^{i(s-\tau)\Delta}f
\,d\tau\,ds
\right\|_{L^2}  
&\le
\int_0^t\int_0^s
\left\|
e^{i\tau\Delta}
\mu^2 V_\mu e^{i(s-\tau)\Delta}f
\right\|_{L^2}
\,d\tau\,ds  \\
&=
\mu^2
\int_0^t\int_0^s
\left\|
V_\mu e^{i(s-\tau)\Delta}f
\right\|_{L^2}
\,d\tau\,ds .
\end{aligned}
\end{equation}
By H\"older's inequality,
\begin{equation}
\left\|
V_\mu e^{i(s-\tau)\Delta}f
\right\|_{L^2}
\le
\|V_\mu\|_{L^2}
\left\|
e^{i(s-\tau)\Delta}f
\right\|_{L^\infty}.
\end{equation}
By Sobolev embedding, and since $e^{ir\Delta}$ preserves the $H^2$ norm, we have
\begin{equation}
\left\|
e^{i(s-\tau)\Delta}f
\right\|_{L^\infty}
\le
C\left\|
e^{i(s-\tau)\Delta}f
\right\|_{H^2}
=C\|f\|_{H^2}.
\end{equation}
Moreover,
\begin{equation}
\|V_\mu\|_{L^2}^2
=\int_{\mathbb R^3}\frac{e^{-2\mu |x|}}{|x|^2}\,dx
=4\pi\int_0^\infty e^{-2\mu r}\,dr
=\frac{2\pi}{\mu}.
\end{equation}
Therefore, for all $0\le \tau\le s\le t$,
\begin{equation}
\left\|
e^{i\tau\Delta}
\mu^2 V_\mu e^{i(s-\tau)\Delta}f
\right\|_{L^2}
\le C_\mu \|f\|_{H^2}.
\end{equation}
It follows that
\begin{equation}
\left\|
\int_0^t\int_0^s
e^{i\tau\Delta}
\mu^2 V_\mu e^{i(s-\tau)\Delta}f
\,d\tau\,ds
\right\|_{L^2}
\le C_\mu \|f\|_{H^2}
\int_0^t\int_0^s d\tau\,ds  
\le C_\mu t^2\|f\|_{H^2}.
\end{equation}

\end{proof}

\end{document}